\documentclass[oneside,reqno]{amsart}
\usepackage[T1]{fontenc}		
\usepackage{ankita-packages}
\usepackage{ankita-commands}
\usepackage{ankita-thms}
\usepackage{ankita-spacing}
\usepackage{combgames}
\usepackage{scrextend}          
\usepackage{bm}
\usepackage[table]{xcolor}
\usetikzlibrary{positioning,decorations.pathreplacing}
\usepackage{mathrsfs}

\newcommand{\fs}{\mathcal{F}}
\newcommand{\ts}{\mathcal{T}}
\newcommand{\aw}{\mathrm{aw}}
\newcommand{\lp}{\mathscr{L}}
\newcommand{\rp}{\mathscr{R}}
\newcommand{\np}{\mathscr{N}}
\newcommand{\pp}{\mathscr{P}}
\newcommand{\hbeta}{\widehat\beta}

\usetikzlibrary {shapes.geometric}
\tikzset{star/.pic={
        \node [star, star point ratio=.08cm, draw, scale=0.4] at (0,0) { };
        }}
\newcommand{\cgfarstareq}[2]{{#1}\sim_{\cgfarstar}{#2}}
\makeatletter
\@namedef{subjclassname@2020}{\textup{2020} Mathematics Subject Classification}
\makeatother

\title{Partizan Subtraction with full and truncated support}

\author[Ankita Dargad]{Ankita Dargad}
\address{Department of Mathematics, Indian Institute of Technology Bombay, Powai, Mumbai 400076}
\email{ankitadargad.iitb@gmail.com}

\author[U. Larsson]{Urban Larsson}
\address{Department of Industrial Engineering and Operations Research, Indian Institute of Technology Bombay, Powai, Mumbai 400076}
\email{larsson@iitb.ac.in}

\subjclass[MSC 2020]{Primary 91A46; Secondary 91A05} 
\keywords{Combinatorial Game, Partizan Subtraction, Atomic Weight, Domination, Balance, Wealth Nim}

\date{\today}

\begin{document}
\begin{abstract}
    We investigate {\sc Full Support (FS)}, a {\sc Partizan Subtraction} game in which the players can remove any number of pebbles from the heap up to certain bounds that are typically different for Left and Right. The player with the richer move set always wins for all but finitely many heap sizes. We confirm this advantage by finding the general canonical form and the atomic weights of this game. To restore fairness (and peace), we introduce {\sc Truncated Support (TS)}, which essentially trims the larger subtraction set from below. If the truncation is shallow, the unfairness persists above a certain heap size, and one player continues ruling. If the truncation is deep, another player starts ruling. Interestingly, there is one more balanced truncation level in the middle, for which a non-trivial periodicity emerges, and where it has infinitely many $\mathcal P$ and $\mathcal N$-positions. We also explore the atomic weights for the lightly trimmed scenarios.
\end{abstract}
\maketitle

\section{Introduction}\label{sec: introduction FSTS}

Subtraction games form a classical family of combinatorial games. 
These are two-player games played on finite heaps of tokens. For each heap, each player is assigned a finite set of positive integers, called a {\em subtraction set}, which determines the number of pebbles the player can remove from the heap on a turn. The players move alternately, and a player who is unable to make a move loses the game. 

The two players are called Left (she) and Right (he), and given a heap, their subtraction sets are denoted by $S_L$ and $S_R$, respectively. Subtraction games have been studied extensively, with a large body of work devoted to impartial games, where $S_L=S_R$. However, comparatively less is known about {\sc Partizan Subtraction}, in which the move sets of the two players differ. In this paper, we study two sub-families of {\sc Partizan Subtraction}, which we call {\sc Full Support} and {\sc Truncated Support}. 

Consider the following narrative: 
\begingroup
\sffamily
\begin{quote}
During the annual polar-festivities, a penguin, Aquin (she), and a juvenile polar bear, Baloo (he), discover a fish buried beneath a heap of pebbles. They agree to remove the pebbles alternately, and the player who uncovers the fish first gets to eat it. Aquin, with her small flippers, can remove only a few pebbles at a time, whereas Baloo, with his larger paws, can remove both small and large portions of the heap. The heap is large enough, covering the whole fish, so whoever starts cannot fully remove the heap. 

Baloo offers Aquin the chance to start. It turns out that she still cannot win the fish. Baloo can exploit his additional power intelligently by removing a single pebble whenever the full heap cannot be removed. This strategy prevents Aquin from winning on the next move, as the remaining heap is still too large for her to remove it in one turn. 

Baloo's daddy, Polar Papa, then joins the game. However, his paws are so large that he is unable to remove very small portions of the heap. To their surprise, his immense power is irrelevant: Aquin discovered a way of play so that only a few pebbles remain at the end, so Polar Papa is unable to move, and Aquin can win the fish. 
 
Over the years, the game became popular among the polar bear and penguin families, and naturally evolved into a tournament-style team contest. In this contest, two mixed teams of polar bears and penguins spread out across several heaps, with each heap hosting one penguin and one polar bear from opposing teams. The teams then take turns, each time choosing a heap on which to play. The team that is unable to move loses the game and the other team gets all the fish. 
\end{quote}
\par
\endgroup
\begin{figure}[h!]
    \centering
    \includegraphics[width=0.5\linewidth]{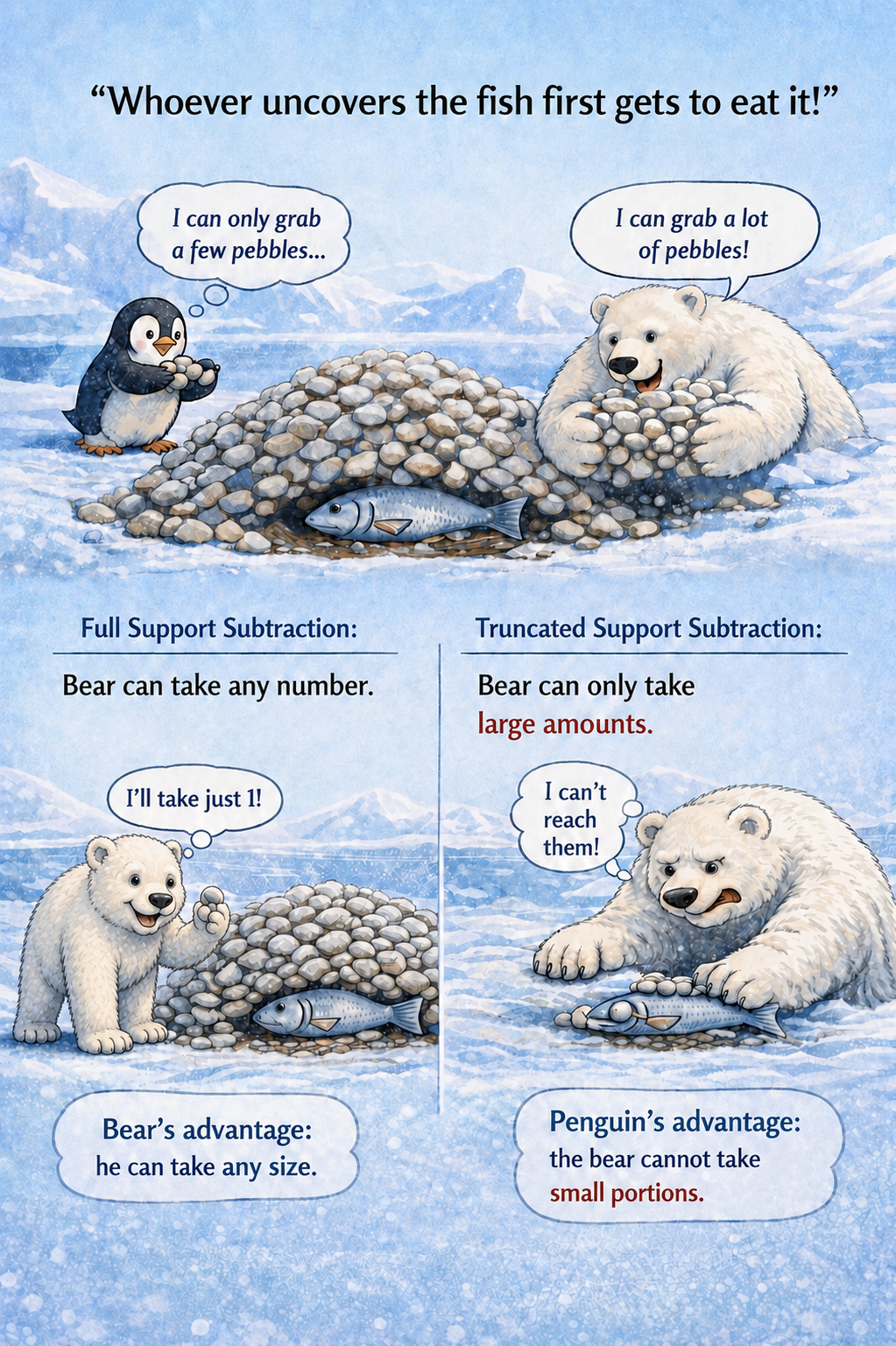}
    \caption{A metaphorical illustration of {\sc Full} and {\sc Truncated Support}.}
    \label{fig: penguin and bear}
\end{figure}

\begin{figure}[ht]
    \centering
    \includegraphics[width=0.9\linewidth]{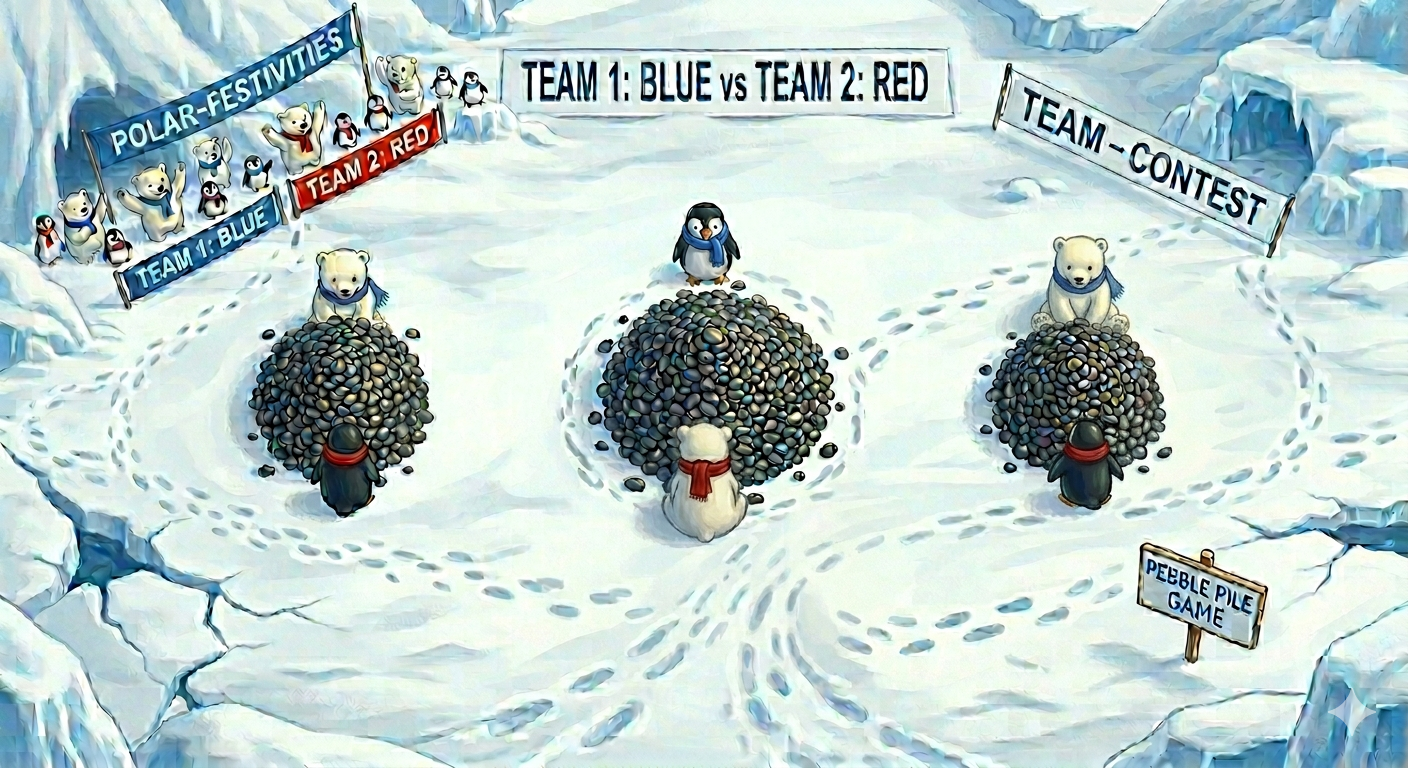}
    \caption{A team contest between Blue and Red team.}
    \label{fig: a team contest}
\end{figure}


The game played by Aquin and Baloo can be modeled using the tools of Combinatorial Game Theory (CGT)~\cite{S2013}; we review all the tools used in this paper in Appendix~\ref{sec: prelims}. Consider Aquin and Baloo as the standard CGT players, Left and Right, respectively. 
Let us denote the maximum number of pebbles Aquin and Baloo can remove from a heap by $a$ and $b$, respectively. Then the game played by Aquin and Baloo is equivalent to the partizan subtraction game with subtraction sets $S_L=[a]=\{1,\dots,a\}$ and $S_R=[b]$, $a,b\in \Nat=\{1,2,\ldots\}$. We call this {\sc Partizan Subtraction} ruleset family {\sc Full Support (FS)}.\footnote{We use the term ``Full Support'' to emphasize that players have the full support of all move sizes up to their maximum capacity, unlike sparse subtraction games.} A  {\sc FS} ruleset can also be viewed as an instance of {\sc Wealth Nim}~\cite{URYCG2020}, with $a$ and $b$ representing the {\em wealth} of the players. Consequently, this paper is the first to study a ruleset that lies at the intersection of {\sc Wealth Nim} and {\sc Partizan Subtraction}.

The multi-heap team contest is just a disjunctive sum of single-heap games. Thus, to understand the contest and determine the winner, we first need to analyze the single heap game.


        
        
        
        
        
        

Let us denote an {\sc FS} position with heap size $n\in \Nat_0= \Nat\cup\{0\} $, Left's subtraction set $[a]$, and Right's subtraction set $[b]$, by $\fs(n;a,b)$. Unless stated otherwise, we assume $0<a\le b$.  



As illustrated by the opening narrative, Baloo wins whenever the heap is large. Theoretically, this is the same as saying that Right wins {\sc FS}, independently of who starts, whenever $a<b$ and the heap is large. The theory also reveals that this advantage is deeper than mere winning. 

To observe this advantage, we use the standard CGT concept of canonical form~\cite{S2013}. Recall that a game $G$ in CGT is recursively defined as $G = \{G^{\mathcal{L}} \mid G^{\mathcal{R}}\}$, where $G^{\mathcal{L}}$ and $G^{\mathcal{R}}$ denote the sets of options available to Left and Right, respectively. Intuitively, the base of this recursion is the \emph{zero game}, denoted by $0 = \{\varnothing \mid \varnothing\}$, where $\varnothing$ denotes an empty set. And the canonical form of a game is its simplest equivalent representation, obtained by recursively removing irrelevant options and sometimes replacing options by appropriate simpler games. See Subsection~\ref{subsec: canonical form} for more details.

Right's advantage becomes transparent in the canonical form: for all large heap sizes, Right has a unique option in the canonical form, namely, to the $0$-game (see Table~\ref{tab: right options of FS}), while Left has to play to a non-zero game, if the heap size is larger than $a$. We establish this result in Theorem~\ref{thm: CF of FS}, and further show that no Left options are eliminated or simplified when reducing the game to its canonical form.

\begin{table}[ht!]
\centering
\renewcommand{\arraystretch}{1.2}
\caption{The Right options in the canonical forms of $\fs(n;3,5)$, $\fs(n;4,7)$ and $\fs(n;2,7)$, for small heap sizes.}
\label{tab: right options of FS}
\small
\begin{tabular}{|c|c|c|c|}
\hline

$n$ & $\fs(n;3,5)$ & $\fs(n;4,7)$ &  $\fs(n;2,7)$ \\ 
\hline
1  & $0$            & $0$               & $0$ \\ \hline
2  & $0,\cgstar$           & $0,\cgstar$              & $0,\cgstar$ \\ \hline
3  & $0,\cgstar,\cgstar2$         & $0,\cgstar,\cgstar2$            &  $0$\\ \hline
4  & $0$            & $0,\cgstar,\cgstar2,\cgstar3$        &  $0$\\ \hline
5  & $0$            & $0$               &  $0$\\ \hline
6  & $0$            & $0$               &  $0$\\ \hline
7  & $0$            & $0$               &  $0$\\ \hline
\end{tabular}
\end{table}

\begin{theorem}[Canonical Form of {\sc FS}]\label{thm: CF of FS}
    For fixed positive integers $a,b$, such that $a<b$, let $G:=\fs(n;a,b)$ be an instance of {\sc FS}. If $n\le a$, $G=*n$, and otherwise, the canonical form of $G$ is \begin{equation}
        \Set{\fs(n-1;a,b), \fs(n-2;a,b),\dots,\fs(n-a;a,b)\mid 0}.
    \label{eq:cf of FS}\end{equation} 
\end{theorem}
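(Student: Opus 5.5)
The plan is to argue by strong induction on $n$, writing $G_n:=\fs(n;a,b)$. The base range $n\le a$ is immediate. Both players may then move to any heap $0,\dots,n-1$, so $G_n=\Set{G_0,\dots,G_{n-1}\mid G_0,\dots,G_{n-1}}$. By induction $G_k=*k$ for $k<n$, and the mex rule gives $G_n=*n$.

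For $n>a$ we start from the literal form
\[
G_n=\Set{G_{n-1},\dots,G_{n-a}\mid G_{n-1},\dots,G_{\max(n-b,0)}}
\]
and we must show that the whole Right side collapses to the single option $0$.

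\textbf{Step 1 (sign).} First I will establish that $G_n<0$ for all $n>a$. Right wins moving first by playing to heap $\max(n-b,0)$ when $n\le b$, and otherwise to a heap from which Left cannot finish. If Left moves first, Right answers so as to keep the heap out of Left's finishing range. Concretely, Right uses Baloo's "remove one pebble" idea: he maintains a heap $>a$, or removes everything when it is $\le b$. This is an easy induction that works directly on heaps.

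\textbf{Step 2 (key comparison).} I then prove the stronger statement that $G_n\le *j$ for every $n>a$ and every $j\ge 0$, i.e.\ Right wins $G_n+*j$ moving second. This is the step I expect to be the main obstacle. The plan is a Right strategy in the sum.
\begin{itemize}
\item If Left moves in $*j$ to $*j'$ with $j'\ge 1$, Right answers by moving $*j'$ to $0$, reducing to Step 1.
\item If Left moves in $*j$ to $0$, we are left with $G_n$ and Right to move, which is a Right win.
\item If Left moves the heap to $m\ge n-a$ with $m>a$, Right replies by induction.
\item If Left moves the heap to $m\le a$, giving $*m+*j$, Right answers with $*m\to *j$ if $j<m$, or $*j\to *m$ if $j>m$; if $j=m$ the sum is already $0$.
\end{itemize}
The delicate cases are the ones where Right's mirroring is blocked or the heap range matters; I would first try to handle them through Right's spare move of one pebble.

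\textbf{Step 3 (reduce Right's options).} Consider a Right option $G_m$ of $G_n$.
\begin{itemize}
\item If $1\le m\le a$, then $G_m=*m$ has the Left option $0$, and $0\ge G_n$ by Step 1. So $*m$ reverses through $0$, whose Right option set is empty, and it simply disappears. This is unless $m=0$ is itself an option, in which case $0$ is kept.
\item If $m>a$, then by induction $G_m=\Set{G_{m-1},\dots,G_{m-a}\mid 0}$. I pick a Left option $G_{m-k}$ that is either $0$ or some $*j$ with $j\ge 1$, which is possible as long as the chain of heaps reaches $[0,a]$. I also need options with $G_{m-k}>a$, handled through Step 2 applied to their own stars. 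By Step 2, $G_{m-k}\ge G_n$, so $G_m$ reverses through it and is replaced by the Right options of $*j$, namely $0,*,\dots,*(j-1)$.
\end{itemize}
The stars introduced in this way again reverse out by the first bullet. A single $0$ survives, since $0$ is not reversible: it has no Left options. This gives the Right side $\{0\}$ in all cases $n>a$.

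\textbf{Step 4 (Left side is canonical).} It remains to show that the Left options $G_{n-1},\dots,G_{n-a}$ are already canonical (inductively), pairwise incomparable, and non-reversible.
\begin{itemize}
\item For incomparability: two stars are incomparable. A star and a $G_k$ with $k>a$ are incomparable, since $G_k<0$ and $G_k\le *j$ together with $*j\not\le 0$ need a separate check. Two values $G_k,G_{k'}$ with $k,k'>a$ are distinguished by first-player strategies on $G_k-G_{k'}$.
\item For non-reversibility: a Left option $G_k$ reverses only if some $G_k^{R}\le G_n$. Every such $G_k^R$ is either a star or $0$, and by Step 1 and the negativity of $G_n$, neither is $\le G_n$, because $G_n\le *j$ but not conversely.
\end{itemize}
Combining Steps 3 and 4 yields \eqref{eq:cf of FS}.
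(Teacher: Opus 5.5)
\textbf{Step 2 is false as stated.} Take $a<n\le 2a$ and $n-a\le j\le a$, for example $n=a+1$ and $j=1$. Moving first in $G_n+*j$, Left removes $n-j\in[1,a]$ pebbles and leaves $*j+*j=0$ with Right to move. So Left wins and $G_n\not\le *j$. Your bullet ``if $j=m$ the sum is already $0$'' is exactly this case, and it is a loss for Right, because he is the one to move.

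The false claim also clashes with Step 4. For $a\ge 2$, $G_{a+1}\le *a$ would make $G_{a+1}$ a dominated Left option of $G_{a+2}$. Your sentence invoking ``$G_k\le *j$'' asserts comparability at exactly the point where you need incomparability.

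What is true is that $G_n\le *j$ whenever $j<n-a$. If Left moves the heap to some $m'\le a$, then $m'>j$ and Right equalizes. Otherwise Right zeroes the star and uses $G_{m'}<0$ or $G_n<0$. This corrected version does handle Right options $G_m$ with $a<m\le 2a$, but only if you reverse through the specific option $G_{m-a}=*(m-a)$, using $m-a\le n-1-a$.

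\textbf{The real gap is Step 3 for Right options $G_m$ with $m>2a$.} All Left options of such a $G_m$ are FS positions with heap $>a$. No argument through stars can reverse $G_m$, because $G_{m'}\not\ge *j$ for every $m'>a$ and every $j\ge 0$:
\begin{itemize}
\item for $j=0$ this holds because $G_{m'}<0$;
\item for $j\ge 1$, Right, moving first in $G_{m'}+*j$, plays $*j\to 0$ and wins.
\end{itemize}
So there is no chain $G_{m'}\ge *j\ge G_n$.

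What is missing is a direct comparison between two FS positions. This is the paper's key lemma (Lemma~\ref{lem: game position comparison FS}):
\begin{itemize}
\item if $n-m'>a$, then $G_n<G_{m'}$: Right wins $G_n-G_{m'}$ by playing on the smaller heap while the difference exceeds $a$, and equalizing as soon as Left brings it into $[1,a]$;
\item if $0<n-m'\le a$, the two positions are confused, because the first player equalizes.
\end{itemize}
With the first part, every Right option $G_m$ with $m>a$ reverses through $G_{m-a}$, since $n-(m-a)\ge a+1$. Your corrected Step 2 is just the special case $m'\le a$. Without this lemma, nothing in your plan removes the Right options once $n-b>2a$, where every literal Right option has $m>2a$. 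So the collapse of the Right side to $\{0\}$ is unproved.

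The second part of the lemma is also what settles the star-versus-large-heap incomparability you deferred in Step 4. The non-reversibility claim there should be justified directly: $G_n\not\ge *j$ because Right wins $G_n+*j$ moving first by playing $*j\to 0$. It should not rest on $G_n\le *j$.

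For comparison, the paper avoids the literal-form reduction entirely. It proves $G_n-\{G_{n-1},\dots,G_{n-a}\mid 0\}\in\pp$ with an explicit strategy, and then uses the comparison lemma for the Left side.
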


When $a=b$, {\sc FS} becomes impartial and 
    $\fs(n;a,a)=*(n \bmod (a+1))$ (see \cite{BCG2004}).

To play strategically in the disjunctive sum of games, players must evaluate all available heaps to determine which one requires an immediate move and on which one they can afford to {\em delay}.

Consider $\fs(n;a,b)$ with $a<b$. If $n\le a$, none of the players can delay making a move as their opponent has a direct move to $0$. If $n>a$, Right can delay making a move, because Left does not have a canonical move to $0$ and after Left's move, Right will still have a canonical move to $0$ by Theorem~\ref{thm: CF of FS}. In contrast, Left cannot afford to delay making a move, since Right has a canonical move to $0$, by Theorem~\ref{thm: CF of FS}. If the heap is very large, Left requires several moves to end the game, whereas Right can end the game at any intermediate stage. Therefore, Right's capacity to delay increases with increase in the heap size.

To understand how the capacity to delay determines the winner in a disjunctive sum of {\sc FS} games, consider the game $G+H$, where $G := \fs(10;3,4)$ and $H := \fs(4;3,2)$. In Table~\ref{fig: game comparison}, we see the minimum number of consecutive moves Left and Right require to end these games in their canonical forms.\\ 

\begin{table}[!htbp]\caption{The minimum number of consecutive moves required by each player to end the canonical forms of  the respective games.}
    \label{fig: game comparison}
    \centering
    \renewcommand{\arraystretch}{1.5}
    \begin{tabular}{c | c | c |}
        \multicolumn{1}{c}{} & 
        \multicolumn{1}{c}{\textbf{$G= \fs(10;3,4)$}} & 
        \multicolumn{1}{c}{\textbf{$H=\fs(4;3,2)$}} \\ \cline{2-3} 
        \textbf{Left} & needs four moves to end & can end any time\\ \cline{2-3} 
        \textbf{Right} & can end any time & needs two moves to end \\ \cline{2-3} 
    \end{tabular}
\end{table}

In $G+H$, Left delays playing on $H$, because her move there would leave the position $G+0$, and then, Right wins by moving to $0$ in $G$. Similarly, Right delays playing on $G$, because a move there would lead to the game $0+H$, and then, Left wins by moving to canonical $0$ in $H$. Furthermore, Left can delay playing on $H$ for one move, as Right requires two moves to end $H$, whereas Right can delay playing on $G$ for three moves, because Left requires four moves to end $G$. Since Right leads in delaying power, he will win the sum, namely, first he makes sure to end the game $H$ (where Left is strong) and then at last he  makes the winning move in $G$.

This notion of delaying power is formalized in combinatorial game theory through the concept of \emph{atomic weight} (see Subsection~\ref{subsec: infinitesimals}).
Intuitively, the atomic weight of a game measures how many moves a player can delay before the opponent can win the game. By convention, if Right (respectively, Left) can delay, then the atomic weight is negative (respectively, positive).

In the game $\fs(n;a,b)$, Left requires at least $\lceil n/a\rceil$ consecutive moves to empty the heap (end the game), whereas Right can move to $0$ in any non-empty sub-position in a single move. Therefore, in a disjunctive sum, Right may allow Left to play for $\lceil n/a\rceil-1$ turns on $\fs(n;a,b)$ and still execute the last move on this component. In our second main result, we prove that the atomic weight of {\sc FS} is exactly $-(\lceil n/a\rceil-1)$.

\begin{theorem}[Atomic Weight of FS]\label{thm: aw of FS}
    Consider $G=\fs(n;a,b)$, where $a\le b$. Then 
    \begin{enumerate}
        \item If $a=b$, then $\aw(G)=0$ for all $n$;
        \item If $a<b$, then $\aw(G)=-\lceil n/a\rceil +1$ for all $n$.
    \end{enumerate}
\end{theorem}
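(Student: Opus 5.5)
The plan is to split into the two cases of the statement and argue by strong induction on $n$, using Theorem~\ref{thm: CF of FS} to reduce each $\fs(n;a,b)$ to a very small canonical form.

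\textbf{Case $a=b$.} Here the game is impartial and $\fs(n;a,a)=*(n\bmod (a+1))$. Every nimber has atomic weight $0$, so item (1) is immediate.

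\textbf{Case $a<b$, base range $n\le a$.} Theorem~\ref{thm: CF of FS} gives $G=*n$, so $\aw(G)=0$. This agrees with $-\lceil n/a\rceil+1=0$ for $1\le n\le a$ and for $n=0$. So the content of the theorem lies in $n>a$.

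\textbf{Case $a<b$, inductive step $n>a$.} By Theorem~\ref{thm: CF of FS}, $G=\{G_{n-1},\dots,G_{n-a}\mid 0\}$, where $G_m:=\fs(m;a,b)$. Write $k=\lceil n/a\rceil-1\ge 1$. By induction, $\aw(G_{n-i})=-\lceil (n-i)/a\rceil+1$. Since $\lceil (n-i)/a\rceil\ge \lceil n/a\rceil-1$ for $1\le i\le a$, with equality at $i=a$, the largest Left atomic weight is $\aw(G_{n-a})=-k+1$.

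I would first try the atomic weight calculus: $G''=\{\aw(G^L)-2\mid \aw(G^R)+2\}$. This gives $G''=\{-k-1\mid 2\}$, which equals the integer $0$ (or $1$, depending on $k$). We are therefore always in the eccentric case, and the naive formula gives the wrong answer. So the calculus alone does not suffice, and I would pass to the far-star characterization from the Appendix.

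In that characterization, $\aw(G)=x$ is pinned down by how $G-x\cdot\!\uparrow$ compares with a remote star $\cgfarstar$. Concretely, I would verify two statements for all sufficiently remote stars:
\begin{itemize}
\item[(i)] $G+k\cdot\!\uparrow+\cgfarstar$ is not a Right win with Left to move, or the corresponding ``$\ge$'' statement;
\item[(ii)] $G+(k-1)\cdot\!\uparrow+\cgfarstar$ fails the analogous condition, and symmetrically at $k+1$.
\end{itemize}
This is exactly the ``delaying'' argument from the introduction, made rigorous.

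\textbf{Strategies for the bounds.} For the bound showing that Right can absorb $k$ ups, Right's plan is as follows:
\begin{itemize}
\item answer each Left move on $G$ passively, spending his moves on the $\uparrow$'s (each $\uparrow=\{0\mid\cgstar\}$) or the remote star;
\item keep his move $G\to 0$ in reserve, which is always available by Theorem~\ref{thm: CF of FS};
\item use the fact that Left needs $\lceil n/a\rceil=k+1$ moves in $G$ to reach $0$, so Right always gets to fire the reserve move last.
\end{itemize}
For the opposite bound, Left's plan is to run down the heap by maximal moves of size $a$. She uses the induction hypothesis on $G_{n-a}$ (atomic weight $-k+1$) to show that one fewer up already suffices for her.

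The main obstacle is the bookkeeping in this eccentric case: the precise orientation of the far-star inequalities, and the handling of the interplay with $\cgstar$'s arising from the $\uparrow$'s. To keep this manageable, I would instead prove, by induction on $n$, the equivalent statement that $G+k\cdot\!\uparrow$ has atomic weight $0$. For this I would check directly that $G+k\cdot\!\uparrow+\cgfarstar$ is an $\np$-position.
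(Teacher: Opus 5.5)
There is a genuine gap: the step you finally commit to would not prove the theorem. By Theorem~\ref{thm: check farstareq}, $\aw(G+k\cdot\cgup)=0$ amounts to $\cgdown\cgfarstar<G+k\cdot\cgup<\cgup\cgfarstar$. That is, it amounts to two \emph{strict} statements: Left wins $G+(k+1)\cdot\cgup\cgfarstar$, and Right wins $G+(k-1)\cdot\cgup\cgfarstar$, whoever starts. The fact that $G+k\cdot\cgup+\cgfarstar\in\np$ follows from $\aw(G+k\cdot\cgup)=0$ (take $X=0$ in the definition of $\sim_{\cgfarstar}$), but it is not a criterion for it. For instance, $H=\{2\cdot\cgup\mid 2\cdot\cgdown\}$ satisfies $H+\cgfarstar\in\np$. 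Yet Left wins $H+\cgdown+\cgfarstar$ moving first, since she moves to $\cgup+\cgfarstar\in\lp$; so $H\not<\cgup\cgfarstar$ and $\aw(H)\neq 0$. Your items (i)--(ii) already blur this, since the game with exactly $k$ ups plays no role in the test.

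Your strategic ideas are the right ones, and they are what the paper uses, but they must be aimed at those two games. In $G(n)+(k+1)\cdot\cgup\cgfarstar$:
\begin{itemize}
\item if Left starts, she removes $a$ tokens, reaching atomic weight $-(k-1)+(k+1)=2$ by induction and additivity (Lemma~\ref{lem: aw of sum of games}), and wins by the two-ahead rule (Theorem~\ref{thm: outcome using aw});
\item if Right starts with $G\to 0$, the leftover $(k+1)\cdot\cgup\cgfarstar$ has weight at least $2$;
\item if Right starts on the ups, Left removes $a$ tokens and invokes the same statement for heap $n-a$.
\end{itemize}
In $G(n)+(k-1)\cdot\cgup\cgfarstar$, the range $a<n\le 2a$ is exactly Lemma~\ref{lem: outcome of G+farstar}. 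For larger $n$, Right spends moves on the ups and concludes with the two-ahead rule and induction.

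You also abandoned too early a route that closes the proof at once. Your computation $\{\aw(G^L)-2\mid\aw(G^R)+2\}=\{-k-1\mid 2\}$ is correct, and it always gives $0$ (never $1$, since $-k-1\le -2$). An integer value is not where the calculus breaks down: Theorem~\ref{thm: compute aw} has a clause for precisely this case, which only asks how $G$ compares with $\cgfarstar$. By Lemma~\ref{lem: outcome of G+farstar}, $G+\cgfarstar\in\rp$ for $n>a$, i.e.\ $G<\cgfarstar$. So $\aw(G)$ is the least integer exceeding every $\aw(G^L)-2$, namely $-k$. Together with your induction (the options are all-small with known weights) and the base range, this is a complete proof, noticeably shorter than the paper's explicit verification of the two far-star inequalities.

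A minor slip: at $n=0$ the formula gives $1\neq\aw(0)$, so the statement is about $n\ge 1$, as in the paper's restatement. This does not affect the induction, because the Left options of $G(n)$ for $n>a$ have positive heaps, and you correctly used $\aw(0)=0$ for the Right option.
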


By this result, the Table~\ref{fig: game comparison} games satisfy \[\aw(G)=\aw(\fs(10;3,4))=-3, \;\text{ and  }\; \aw(H)=\aw(\fs(4;3,2))=1.\] 
By Lemma~\ref{lem: aw of sum of games}, Atomic Weight is additive and we get $\aw(G+H)=\aw(G)+\aw(H)=-2$. By the popular two-ahead rule (Lemma~\ref{thm: outcome using aw}), if atomic weight is less or equal to $-2$, Right wins. Hence, $G+H\in \rp$.


Theorem~\ref{thm: aw of FS} shows that, in {\sc Full Support}, in the narrative, Baloo has a substantial advantage. This is because he can remove large as well as small amount. However, if Polar Papa replaces Baloo, this advantage might not persists because he is unable to remove small amounts from the heap. This motivates the study of a modified ruleset, which we call {\sc Truncated Support (TS)}, in which the larger subtraction set (the subtraction set of Baloo) from {\sc Full Support} is truncated from below.


As before, let $a$ and $b$ denote the maximum numbers of pebbles Left and Right can remove in a single move, respectively, and let $a<b$. 
For an integer $\tau\in [1,b)$, called the \emph{truncation level}, the corresponding {\sc TS} position with heap size $n$ is denoted by $\ts_\tau(n;a,b)$, where the subtraction sets are $S_L=[a]$ and $S_R=\{\tau+1,\dots,b\}$.


As the truncation level $\tau$ increases, Right loses access to an increasing number of small moves; intuitively his advantage should decrease. This intuition is supported by Figure~\ref{fig: outcomes of ts(n,3,7)}, which illustrates the {\em outcomes} of $\ts_\tau(n;3,7)$ for different heap sizes at each truncation level $1\le \tau\le 6$. 
The outcome of a game $G$, denoted by $o(G)$, determines the winner of a game given the starting player. Every game belongs to one of the four outcome classes: $\lp$ (Left wins), $\mathscr{R}$ (Right wins), $\np$ (Next player wins), or $\pp$ (Previous player wins). 
In short, we write $G\in \lp$, if $o(G)=\lp$, and similarly, we write $G\in \np\cup\rp$ if $o(G)$ is either $\np$ or $\rp$. A game is $\lp$-position, if its outcome is $\lp$, and similar for other outcomes.

Return to Figure~\ref{fig: outcomes of ts(n,3,7)}. For small truncation levels, here $\tau\le3$, $\ts_\tau(n;3,7)\in\rp$ for all sufficiently large heap sizes $n$. Thus, removing only a few of Right's smallest moves is not enough to eliminate Right's long-term advantage. 
At $\tau=4$ however, the behavior changes; both $\pp$- and $\np$-positions occur infinitely often, indicating a more balanced outcome structure. For even larger truncation levels, Right loses any advantage, and the outcome becomes $\lp$ for all sufficiently large heap sizes.

\begin{figure}[h!]
\centering
\begin{tikzpicture}[x=0.45cm, y=0.5cm,scale=0.83]
    \tikzset{fade/.style={fill opacity=1}}
  \colorlet{cL}{blue!75}
  \colorlet{cR}{red!75}
  \colorlet{cN}{green!90!black} 
  \colorlet{cP}{black}
  \fill[cL, fade] (0.5, 0.5) rectangle (1.5, 1.5);
  \fill[cN, fade] (1.5, 0.5) rectangle (2.5, 1.5);
  \fill[cN, fade] (2.5, 0.5) rectangle (3.5, 1.5);
  \fill[cN, fade] (3.5, 0.5) rectangle (4.5, 1.5);
  \fill[cR, fade] (4.5, 0.5) rectangle (5.5, 1.5);
  \fill[cR, fade] (5.5, 0.5) rectangle (6.5, 1.5);
  \fill[cR, fade] (6.5, 0.5) rectangle (7.5, 1.5);
  \fill[cR, fade] (7.5, 0.5) rectangle (8.5, 1.5);
  \fill[cR, fade] (8.5, 0.5) rectangle (9.5, 1.5);
  \fill[cR, fade] (9.5, 0.5) rectangle (10.5, 1.5);
  \fill[cR, fade] (10.5, 0.5) rectangle (11.5, 1.5);
  \fill[cR, fade] (11.5, 0.5) rectangle (12.5, 1.5);
  \fill[cR, fade] (12.5, 0.5) rectangle (13.5, 1.5);
  \fill[cR, fade] (13.5, 0.5) rectangle (14.5, 1.5);
  \fill[cR, fade] (14.5, 0.5) rectangle (15.5, 1.5);
  \fill[cR, fade] (15.5, 0.5) rectangle (16.5, 1.5);
  \fill[cR, fade] (16.5, 0.5) rectangle (17.5, 1.5);
  \fill[cR, fade] (17.5, 0.5) rectangle (18.5, 1.5);
  \fill[cR, fade] (18.5, 0.5) rectangle (19.5, 1.5);
  \fill[cR, fade] (19.5, 0.5) rectangle (20.5, 1.5);
  \fill[cR, fade] (20.5, 0.5) rectangle (21.5, 1.5);
  \fill[cR, fade] (21.5, 0.5) rectangle (22.5, 1.5);
  \fill[cR, fade] (22.5, 0.5) rectangle (23.5, 1.5);
  \fill[cR, fade] (23.5, 0.5) rectangle (24.5, 1.5);
  \fill[cR, fade] (24.5, 0.5) rectangle (25.5, 1.5);
  \fill[cR, fade] (25.5, 0.5) rectangle (26.5, 1.5);
  \fill[cR, fade] (26.5, 0.5) rectangle (27.5, 1.5);
  \fill[cR, fade] (27.5, 0.5) rectangle (28.5, 1.5);
  \fill[cR, fade] (28.5, 0.5) rectangle (29.5, 1.5);
  \fill[cR, fade] (29.5, 0.5) rectangle (30.5, 1.5);
  \fill[cR, fade] (30.5, 0.5) rectangle (31.5, 1.5);
  \fill[cR, fade] (31.5, 0.5) rectangle (32.5, 1.5);
  \fill[cR, fade] (32.5, 0.5) rectangle (33.5, 1.5);
  \fill[cR, fade] (33.5, 0.5) rectangle (34.5, 1.5);
  \fill[cL, fade] (0.5, 1.5) rectangle (1.5, 2.5);
  \fill[cL, fade] (1.5, 1.5) rectangle (2.5, 2.5);
  \fill[cN, fade] (2.5, 1.5) rectangle (3.5, 2.5);
  \fill[cN, fade] (3.5, 1.5) rectangle (4.5, 2.5);
  \fill[cN, fade] (4.5, 1.5) rectangle (5.5, 2.5);
  \fill[cR, fade] (5.5, 1.5) rectangle (6.5, 2.5);
  \fill[cR, fade] (6.5, 1.5) rectangle (7.5, 2.5);
  \fill[cP, fade] (7.5, 1.5) rectangle (8.5, 2.5);
  \fill[cN, fade] (8.5, 1.5) rectangle (9.5, 2.5);
  \fill[cN, fade] (9.5, 1.5) rectangle (10.5, 2.5);
  \fill[cN, fade] (10.5, 1.5) rectangle (11.5, 2.5);
  \fill[cR, fade] (11.5, 1.5) rectangle (12.5, 2.5);
  \fill[cR, fade] (12.5, 1.5) rectangle (13.5, 2.5);
  \fill[cR, fade] (13.5, 1.5) rectangle (14.5, 2.5);
  \fill[cR, fade] (14.5, 1.5) rectangle (15.5, 2.5);
  \fill[cR, fade] (15.5, 1.5) rectangle (16.5, 2.5);
  \fill[cR, fade] (16.5, 1.5) rectangle (17.5, 2.5);
  \fill[cR, fade] (17.5, 1.5) rectangle (18.5, 2.5);
  \fill[cR, fade] (18.5, 1.5) rectangle (19.5, 2.5);
  \fill[cR, fade] (19.5, 1.5) rectangle (20.5, 2.5);
  \fill[cR, fade] (20.5, 1.5) rectangle (21.5, 2.5);
  \fill[cR, fade] (21.5, 1.5) rectangle (22.5, 2.5);
  \fill[cR, fade] (22.5, 1.5) rectangle (23.5, 2.5);
  \fill[cR, fade] (23.5, 1.5) rectangle (24.5, 2.5);
  \fill[cR, fade] (24.5, 1.5) rectangle (25.5, 2.5);
  \fill[cR, fade] (25.5, 1.5) rectangle (26.5, 2.5);
  \fill[cR, fade] (26.5, 1.5) rectangle (27.5, 2.5);
  \fill[cR, fade] (27.5, 1.5) rectangle (28.5, 2.5);
  \fill[cR, fade] (28.5, 1.5) rectangle (29.5, 2.5);
  \fill[cR, fade] (29.5, 1.5) rectangle (30.5, 2.5);
  \fill[cR, fade] (30.5, 1.5) rectangle (31.5, 2.5);
  \fill[cR, fade] (31.5, 1.5) rectangle (32.5, 2.5);
  \fill[cR, fade] (32.5, 1.5) rectangle (33.5, 2.5);
  \fill[cR, fade] (33.5, 1.5) rectangle (34.5, 2.5);
  \fill[cL, fade] (0.5, 2.5) rectangle (1.5, 3.5);
  \fill[cL, fade] (1.5, 2.5) rectangle (2.5, 3.5);
  \fill[cL, fade] (2.5, 2.5) rectangle (3.5, 3.5);
  \fill[cN, fade] (3.5, 2.5) rectangle (4.5, 3.5);
  \fill[cN, fade] (4.5, 2.5) rectangle (5.5, 3.5);
  \fill[cN, fade] (5.5, 2.5) rectangle (6.5, 3.5);
  \fill[cR, fade] (6.5, 2.5) rectangle (7.5, 3.5);
  \fill[cP, fade] (7.5, 2.5) rectangle (8.5, 3.5);
  \fill[cL, fade] (8.5, 2.5) rectangle (9.5, 3.5);
  \fill[cL, fade] (9.5, 2.5) rectangle (10.5, 3.5);
  \fill[cN, fade] (10.5, 2.5) rectangle (11.5, 3.5);
  \fill[cN, fade] (11.5, 2.5) rectangle (12.5, 3.5);
  \fill[cN, fade] (12.5, 2.5) rectangle (13.5, 3.5);
  \fill[cR, fade] (13.5, 2.5) rectangle (14.5, 3.5);
  \fill[cR, fade] (14.5, 2.5) rectangle (15.5, 3.5);
  \fill[cP, fade] (15.5, 2.5) rectangle (16.5, 3.5);
  \fill[cL, fade] (16.5, 2.5) rectangle (17.5, 3.5);
  \fill[cN, fade] (17.5, 2.5) rectangle (18.5, 3.5);
  \fill[cN, fade] (18.5, 2.5) rectangle (19.5, 3.5);
  \fill[cN, fade] (19.5, 2.5) rectangle (20.5, 3.5);
  \fill[cR, fade] (20.5, 2.5) rectangle (21.5, 3.5);
  \fill[cR, fade] (21.5, 2.5) rectangle (22.5, 3.5);
  \fill[cR, fade] (22.5, 2.5) rectangle (23.5, 3.5);
  \fill[cP, fade] (23.5, 2.5) rectangle (24.5, 3.5);
  \fill[cN, fade] (24.5, 2.5) rectangle (25.5, 3.5);
  \fill[cN, fade] (25.5, 2.5) rectangle (26.5, 3.5);
  \fill[cN, fade] (26.5, 2.5) rectangle (27.5, 3.5);
  \fill[cR, fade] (27.5, 2.5) rectangle (28.5, 3.5);
  \fill[cR, fade] (28.5, 2.5) rectangle (29.5, 3.5);
  \fill[cR, fade] (29.5, 2.5) rectangle (30.5, 3.5);
  \fill[cR, fade] (30.5, 2.5) rectangle (31.5, 3.5);
  \fill[cR, fade] (31.5, 2.5) rectangle (32.5, 3.5);
  \fill[cR, fade] (32.5, 2.5) rectangle (33.5, 3.5);
  \fill[cR, fade] (33.5, 2.5) rectangle (34.5, 3.5);
  \fill[cL] (0.5, 3.5) rectangle (1.5, 4.5);
  \fill[cL] (1.5, 3.5) rectangle (2.5, 4.5);
  \fill[cL] (2.5, 3.5) rectangle (3.5, 4.5);
  \fill[cL] (3.5, 3.5) rectangle (4.5, 4.5);
  \fill[cN] (4.5, 3.5) rectangle (5.5, 4.5);
  \fill[cN] (5.5, 3.5) rectangle (6.5, 4.5);
  \fill[cN] (6.5, 3.5) rectangle (7.5, 4.5);
  \fill[cP] (7.5, 3.5) rectangle (8.5, 4.5);
  \fill[cL] (8.5, 3.5) rectangle (9.5, 4.5);
  \fill[cL] (9.5, 3.5) rectangle (10.5, 4.5);
  \fill[cL] (10.5, 3.5) rectangle (11.5, 4.5);
  \fill[cL] (11.5, 3.5) rectangle (12.5, 4.5);
  \fill[cN] (12.5, 3.5) rectangle (13.5, 4.5);
  \fill[cN] (13.5, 3.5) rectangle (14.5, 4.5);
  \fill[cN] (14.5, 3.5) rectangle (15.5, 4.5);
  \fill[cP] (15.5, 3.5) rectangle (16.5, 4.5);
  \fill[cL] (16.5, 3.5) rectangle (17.5, 4.5);
  \fill[cL] (17.5, 3.5) rectangle (18.5, 4.5);
  \fill[cL] (18.5, 3.5) rectangle (19.5, 4.5);
  \fill[cL] (19.5, 3.5) rectangle (20.5, 4.5);
  \fill[cN] (20.5, 3.5) rectangle (21.5, 4.5);
  \fill[cN] (21.5, 3.5) rectangle (22.5, 4.5);
  \fill[cN] (22.5, 3.5) rectangle (23.5, 4.5);
  \fill[cP] (23.5, 3.5) rectangle (24.5, 4.5);
  \fill[cL] (24.5, 3.5) rectangle (25.5, 4.5);
  \fill[cL] (25.5, 3.5) rectangle (26.5, 4.5);
  \fill[cL] (26.5, 3.5) rectangle (27.5, 4.5);
  \fill[cL] (27.5, 3.5) rectangle (28.5, 4.5);
  \fill[cN] (28.5, 3.5) rectangle (29.5, 4.5);
  \fill[cN] (29.5, 3.5) rectangle (30.5, 4.5);
  \fill[cN] (30.5, 3.5) rectangle (31.5, 4.5);
  \fill[cP] (31.5, 3.5) rectangle (32.5, 4.5);
  \fill[cL] (32.5, 3.5) rectangle (33.5, 4.5);
  \fill[cL] (33.5, 3.5) rectangle (34.5, 4.5);
  \fill[cL, fade] (0.5, 4.5) rectangle (1.5, 5.5);
  \fill[cL, fade] (1.5, 4.5) rectangle (2.5, 5.5);
  \fill[cL, fade] (2.5, 4.5) rectangle (3.5, 5.5);
  \fill[cL, fade] (3.5, 4.5) rectangle (4.5, 5.5);
  \fill[cL, fade] (4.5, 4.5) rectangle (5.5, 5.5);
  \fill[cN, fade] (5.5, 4.5) rectangle (6.5, 5.5);
  \fill[cN, fade] (6.5, 4.5) rectangle (7.5, 5.5);
  \fill[cL, fade] (7.5, 4.5) rectangle (8.5, 5.5);
  \fill[cL, fade] (8.5, 4.5) rectangle (9.5, 5.5);
  \fill[cL, fade] (9.5, 4.5) rectangle (10.5, 5.5);
  \fill[cL, fade] (10.5, 4.5) rectangle (11.5, 5.5);
  \fill[cL, fade] (11.5, 4.5) rectangle (12.5, 5.5);
  \fill[cL, fade] (12.5, 4.5) rectangle (13.5, 5.5);
  \fill[cL, fade] (13.5, 4.5) rectangle (14.5, 5.5);
  \fill[cL, fade] (14.5, 4.5) rectangle (15.5, 5.5);
  \fill[cL, fade] (15.5, 4.5) rectangle (16.5, 5.5);
  \fill[cL, fade] (16.5, 4.5) rectangle (17.5, 5.5);
  \fill[cL, fade] (17.5, 4.5) rectangle (18.5, 5.5);
  \fill[cL, fade] (18.5, 4.5) rectangle (19.5, 5.5);
  \fill[cL, fade] (19.5, 4.5) rectangle (20.5, 5.5);
  \fill[cL, fade] (20.5, 4.5) rectangle (21.5, 5.5);
  \fill[cL, fade] (21.5, 4.5) rectangle (22.5, 5.5);
  \fill[cL, fade] (22.5, 4.5) rectangle (23.5, 5.5);
  \fill[cL, fade] (23.5, 4.5) rectangle (24.5, 5.5);
  \fill[cL, fade] (24.5, 4.5) rectangle (25.5, 5.5);
  \fill[cL, fade] (25.5, 4.5) rectangle (26.5, 5.5);
  \fill[cL, fade] (26.5, 4.5) rectangle (27.5, 5.5);
  \fill[cL, fade] (27.5, 4.5) rectangle (28.5, 5.5);
  \fill[cL, fade] (28.5, 4.5) rectangle (29.5, 5.5);
  \fill[cL, fade] (29.5, 4.5) rectangle (30.5, 5.5);
  \fill[cL, fade] (30.5, 4.5) rectangle (31.5, 5.5);
  \fill[cL, fade] (31.5, 4.5) rectangle (32.5, 5.5);
  \fill[cL, fade] (32.5, 4.5) rectangle (33.5, 5.5);
  \fill[cL, fade] (33.5, 4.5) rectangle (34.5, 5.5);
  \fill[cL, fade] (0.5, 5.5) rectangle (1.5, 6.5);
  \fill[cL, fade] (1.5, 5.5) rectangle (2.5, 6.5);
  \fill[cL, fade] (2.5, 5.5) rectangle (3.5, 6.5);
  \fill[cL, fade] (3.5, 5.5) rectangle (4.5, 6.5);
  \fill[cL, fade] (4.5, 5.5) rectangle (5.5, 6.5);
  \fill[cL, fade] (5.5, 5.5) rectangle (6.5, 6.5);
  \fill[cN, fade] (6.5, 5.5) rectangle (7.5, 6.5);
  \fill[cL, fade] (7.5, 5.5) rectangle (8.5, 6.5);
  \fill[cL, fade] (8.5, 5.5) rectangle (9.5, 6.5);
  \fill[cL, fade] (9.5, 5.5) rectangle (10.5, 6.5);
  \fill[cL, fade] (10.5, 5.5) rectangle (11.5, 6.5);
  \fill[cL, fade] (11.5, 5.5) rectangle (12.5, 6.5);
  \fill[cL, fade] (12.5, 5.5) rectangle (13.5, 6.5);
  \fill[cL, fade] (13.5, 5.5) rectangle (14.5, 6.5);
  \fill[cL, fade] (14.5, 5.5) rectangle (15.5, 6.5);
  \fill[cL, fade] (15.5, 5.5) rectangle (16.5, 6.5);
  \fill[cL, fade] (16.5, 5.5) rectangle (17.5, 6.5);
  \fill[cL, fade] (17.5, 5.5) rectangle (18.5, 6.5);
  \fill[cL, fade] (18.5, 5.5) rectangle (19.5, 6.5);
  \fill[cL, fade] (19.5, 5.5) rectangle (20.5, 6.5);
  \fill[cL, fade] (20.5, 5.5) rectangle (21.5, 6.5);
  \fill[cL, fade] (21.5, 5.5) rectangle (22.5, 6.5);
  \fill[cL, fade] (22.5, 5.5) rectangle (23.5, 6.5);
  \fill[cL, fade] (23.5, 5.5) rectangle (24.5, 6.5);
  \fill[cL, fade] (24.5, 5.5) rectangle (25.5, 6.5);
  \fill[cL, fade] (25.5, 5.5) rectangle (26.5, 6.5);
  \fill[cL, fade] (26.5, 5.5) rectangle (27.5, 6.5);
  \fill[cL, fade] (27.5, 5.5) rectangle (28.5, 6.5);
  \fill[cL, fade] (28.5, 5.5) rectangle (29.5, 6.5);
  \fill[cL, fade] (29.5, 5.5) rectangle (30.5, 6.5);
  \fill[cL, fade] (30.5, 5.5) rectangle (31.5, 6.5);
  \fill[cL, fade] (31.5, 5.5) rectangle (32.5, 6.5);
  \fill[cL, fade] (32.5, 5.5) rectangle (33.5, 6.5);
  \fill[cL, fade] (33.5, 5.5) rectangle (34.5, 6.5);
  
  \draw[yellow!90,dashed, line width=1pt]
    (0.5,4) -- (34.5,4);
  \foreach \n in {1, 2, ..., 34} \node[below, font=\tiny] at (\n, 0.25) {\n};
  \foreach \k in {1,2,3,4,5,6}{
    \ifnum\k=4
        \node[left,font=\scriptsize] at (0.5,\k) {\textbf{\k}};
    \else
        \node[left,font=\scriptsize] at (0.5,\k) {\k};
    \fi
}
  \node[below=0.5cm] at (17.5, 0.5) {Heap Size ($n$)};
  \node[rotate=90, above=1cm] at (0, 1.1) {Truncation Level ($\tau$)};




  \begin{scope}[shift={(35.5, 3.5)}]
    \node[right] at (-0.8, 2.3) {Outcomes};
    \draw[fill=cL, fade] (0, 0.8) rectangle (0.8, 1.6); \node[right] at (1, 1.2) {$\mathscr{L}$};
    \draw[fill=cR, fade] (0, -0.4) rectangle (0.8, 0.4); \node[right] at (1, 0) {$\mathscr{R}$};
    \draw[fill=cN, fade] (0, -1.6) rectangle (0.8, -0.8); \node[right] at (1, -1.2) {$\mathscr{N}$};
    \draw[fill=cP, fade] (0, -2.8) rectangle (0.8, -2.0); \node[right] at (1, -2.4) {$\mathscr{P}$};
  \end{scope}
\end{tikzpicture}
\caption{Outcomes of $\ts_\tau(n; 3, 7)$. Blue, red, green, and black cells represent $\mathscr{L}$, $\mathscr{R}$, $\mathscr{N}$, and $\mathscr{P}$ outcomes, respectively. The row corresponding to the truncation level at which $\pp$- and $\np$-positions occur infinitely often is highlighted.}
\label{fig: outcomes of ts(n,3,7)}
\end{figure}

For fixed $a,b$ and $\tau$, we say that the ruleset
$\ts_\tau(n;a,b)$ is \emph{Right-dominating} if $\ts_\tau(n;a,b)\in \rp$, for all sufficiently large $n$. Similarly, if $\ts_\tau(n;a,b)\in \lp$ for all sufficiently large $n$, we say that the ruleset is \emph{Left-dominating}. If the ruleset is neither Left- nor Right-dominating, we call it \emph{balanced}.

This motivates the following questions: does the ruleset progress systematically from being Right-dominating at small truncation levels, to balanced in the middle, and finally Left-dominating at large truncation levels? If this is the case, what is the minimum truncation level at which the ruleset first becomes balanced? We refer to this specific truncation level as the \emph{critical threshold}. This naturally leads to another interesting question: is the ruleset balanced exclusively at the critical threshold, or does this balanced behavior occur across multiple truncation levels?
In line with these questions, Figure~\ref{fig: outcomes of ts(n,3,7)} and many other experiments had suggested to us that, for fixed $a$ and $b$, {\sc Truncated Support} is Right-dominating for truncation levels below $b-a$, balanced at truncation level $b-a$, and Left-dominating for truncation levels above $b-a$. Our third main result establishes this trichotomy.

\begin{theorem}[Outcomes of TS]\label{thm: outcomes of TS}
    Consider $G=\ts_\tau(n;a,b)$ where $a<b$. 
    \begin{enumerate}
        \item\label{item:thm: outcomes of TS:1} If $\tau>b-a$, then $G\in \lp$, for all $n\ge b+1$.
        \item\label{item:thm: outcomes of TS:2} If $\tau=b-a$, let $t = n\bmod (b+1)$, $0\le t\le b$. Then, for all $n\ge 0$,
        \begin{enumerate}
            \item $G\in \pp$, if $t=0$;  
            \item $G\in \lp$, if $1\le t\le \tau$;
            \item $G\in \np$, if $\tau+1\le t\le b$. 
            \end{enumerate}
        \item\label{item:thm: outcomes of TS:3} If $\tau<b-a$, then $G\in \rp$, for all $n\ge \left\lfloor\frac{b-a}{b-a-\tau}\right\rfloor\!(a+\tau+1)$.
        \end{enumerate}
\end{theorem}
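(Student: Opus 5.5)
We argue by tracking two Boolean sequences indexed by the heap size. Let $\mathrm{LW}(n)$ mean ``Left, moving first in $\ts_\tau(n;a,b)$, wins'' and $\mathrm{RW}(n)$ mean ``Right, moving first, wins''. Since $S_L=[a]$ and $S_R=\{\tau+1,\dots,b\}$, the recursions are
\[
\mathrm{LW}(n)\iff \exists\, i\in[a],\ i\le n,\ \neg\mathrm{RW}(n-i),
\qquad
\mathrm{RW}(n)\iff \exists\, j\in[\tau+1,b],\ j\le n,\ \neg\mathrm{LW}(n-j).
\]
The outcome is then read off from the pair $(\mathrm{LW}(n),\mathrm{RW}(n))$. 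In each of the three cases we guess the pattern and verify it by strong induction on $n$.

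\textbf{Case $\tau>b-a$.} We claim that $\mathrm{LW}(m)$ holds for all $m\ge1$. We also claim that $\neg\mathrm{RW}(n)$ holds exactly for $n\in[0,\tau]\cup[b+1,\infty)$.
\begin{itemize}
\item For $n\ge b+1$, every Right move lands on some $m=n-j\ge1$. There Left wins by induction, so $\neg\mathrm{RW}(n)$.
\item For $n\in[\tau+1,b]$, Right moves to $0$, so $\mathrm{RW}(n)$.
\item For $n\le\tau$, Right has no move, so $\neg\mathrm{RW}(n)$.
\end{itemize}
For $\mathrm{LW}(m)$, note that Left's options form a window of $\min(a,m)$ consecutive heaps ending at $m-1$. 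If $m\le a$, the window contains $0$, where $\neg\mathrm{RW}$ holds. Otherwise it is a window of $a$ consecutive nonnegative integers. The set where $\mathrm{RW}$ holds is the single interval $[\tau+1,b]$, of length $b-\tau<a$, so the window must contain a heap with $\neg\mathrm{RW}$. Hence $G\in\lp$ for $n\ge b+1$.

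\textbf{Case $\tau=b-a$.} Write $t=n\bmod(b+1)$. We verify simultaneously that $\mathrm{LW}(n)\iff t\ne0$ and $\mathrm{RW}(n)\iff t\in[\tau+1,b]$.
\begin{itemize}
\item If $t=0$, Left's moves land on residues $[b+1-a,b]=[\tau+1,b]$, which are Right-first wins. Right's moves land on residues $b+1-j\in[1,b-\tau]=[1,a]$, which are nonzero and hence Left-first wins. So $G\in\pp$.
\item If $1\le t\le\tau$, Left removes one pebble to reach residue $t-1\in[0,\tau-1]$, where Right moving first loses. Every Right move, if one exists, lands on a residue in $[t+1,t+a]\subseteq[2,b]$, which is nonzero. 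So $G\in\lp$.
\item If $t\ge\tau+1$, Right removes $t$ pebbles to reach residue $0$. Left removes $t-\tau\le a$ pebbles to reach residue $\tau$. So $G\in\np$.
\end{itemize}
Small heaps need no separate treatment, because whenever a move is unavailable the landing residue argument is vacuous or lands on $0$.

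\textbf{Case $\tau<b-a$.} This is the main obstacle. Here we expect that eventually $\neg\mathrm{LW}$ and $\mathrm{RW}$ both hold on every large heap. The stated bound suggests analysing blocks
\[
B_k=[k(a+\tau+1),\,(k+1)(a+\tau+1)).
\]
The working hypothesis is that, within $B_k$, the Left-first losing heaps form an initial interval of $B_k$ whose length grows by $b-a-\tau$ from one block to the next. The Right-first losing heaps are correspondingly an interval of length about $\tau+1$ shrinking by the same amount, so the ``deficit'' $b-a$ of Right is eroded at rate $b-a-\tau$ per block.

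Step one is to compute blocks $k=0,1$ by hand from the recursion to pin down the exact endpoints. Step two is to prove the interval description for $B_{k+1}$ from that for $B_k$ and $B_{k-1}$ by strong induction. The key tools are that Left's options form a window of width $a$ and that Right's reach spans $[n-b,n-\tau-1]$, which has width $b-\tau$ and extends back into the previous block. Step three: once $k\ge\lfloor (b-a)/(b-a-\tau)\rfloor$, the Left-losing interval fills all of $B_k$. From then on the self-sustaining pattern ``$\neg\mathrm{LW}$ and $\mathrm{RW}$ everywhere'' persists. Indeed, any Left move lands on a heap where Right moving first wins. Also, Right can always reach a heap where Left moving first loses, because that region already has length at least $b-\tau$, so his window of reach cannot miss it. Getting the exact endpoint formulas right in step two, including the floor in the threshold, is where I expect the real work to lie.
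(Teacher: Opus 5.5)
\textbf{Parts (1) and (2)} are correct and are essentially the paper's arguments. In (2), your residue bookkeeping is exactly the paper's induction: Left takes one pebble when $1\le t\le\tau$ and $t-\tau$ pebbles when $t\ge\tau+1$, and Right takes $t$. In (1), your observation that a window of $a$ consecutive heaps cannot fit inside $[\tau+1,b]$ (only $b-\tau<a$ elements) is a uniform repackaging of the paper's explicit winning moves.

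\textbf{Part (3) is not proved.} You give a working hypothesis and a three-step plan, but no endpoint formulas and no induction, and that is the entire content of the claim. The threshold mechanism you describe also fails. Your growth law gives the Left-first-losing initial interval of $B_k$ length $k(b-a-\tau)+1$. That is right for $k<\widehat\beta$, where the interval is $[k(a+\tau+1),k(b+1)]$. But it does \emph{not} make the interval fill $B_k$ at $k=\widehat\beta$. For $\ts_4(n;5,11)$ we have $\widehat\beta=3$; the law predicts $7$ of the $10$ heaps of $B_3=[30,39]$, yet all of them are $\rp$. The transition is a collapse, not a gradual filling.

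What you actually need is the full description for $k<\widehat\beta$. In $B_k$:
\begin{itemize}
\item the heaps in $[k(a+\tau+1),k(b+1))$ are $\rp$;
\item the heap $k(b+1)$ is $\pp$;
\item the heaps strictly between $k(b+1)$ and $(k+1)(a+\tau+1)-a$ are $\lp$;
\item the last $a$ heaps are $\np$.
\end{itemize}
This is the paper's block lemmas (with $L_\beta,N_\beta,R_\beta,P_\beta$), with your blocks shifted to start at $R_\beta$.

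You then need one key observation. The $\pp$-position at $(k+1)(b+1)$ exists iff Right's reach $[k(b+1)+1,\,(k+1)(b+1)-\tau-1]$ stays below $(k+1)(a+\tau+1)$, i.e.\ iff $(k+1)(b-a-\tau)\le\tau$. This is exactly where $\widehat\beta=\lfloor (b-a)/(b-a-\tau)\rfloor=\lfloor \tau/(b-a-\tau)\rfloor+1$ comes from. Once the condition fails, at $k+1=\widehat\beta$, the block $B_{\widehat\beta}$ contains no heap where Right moving first loses. For $n\ge\widehat\beta(b+1)$, Right wins by removing exactly $\widehat\beta(b-a-\tau)$ pebbles. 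This is legal because $\tau<\widehat\beta(b-a-\tau)\le b-a<b$.

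Your persistence sentence also misfires right at the threshold. For $n\in[\widehat\beta(a+\tau+1),\,\widehat\beta(a+\tau+1)+\tau]$, all of Right's reach lies below the threshold. So ``that region already has length at least $b-\tau$'' is irrelevant there. What must be used is that the reach meets the previous $\rp$/$\pp$ interval $[(\widehat\beta-1)(a+\tau+1),\,(\widehat\beta-1)(b+1)]$. Likewise, Left's moves from just above the threshold land in the $\np$-run just below it, and that run must be established first.
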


Theorem~\ref{thm: outcomes of TS} proves that $b-a$ is the critical threshold. Below this threshold the ruleset is Right-dominating, and above it, the ruleset is Left-dominating. Therefore, we call the level $\kappa:=b-a$ the {\em knife's edge}. 

The second part of Theorem~\ref{thm: outcomes of TS} says that the outcome of {\sc TS} at the knife's edge is periodic with period $b+1$. Tables~\ref{tab:CF of ts_4(n,3,7)} and \ref{tab: cf of ts1(n;5,6)} suggest an even stronger phenomenon: not only is the outcome periodic, but the canonical forms of {\sc TS} at the knife's edge is also periodic with the same period length. Our next theorem establishes this periodicity of the canonical forms.


\begin{table}[h!]
    \centering
    \caption{Canonical forms of $\ts_{4}(n;3,7)$ for $0 \le n \le 15$.}
    \label{tab:CF of ts_4(n,3,7)}
    
    \setlength{\tabcolsep}{4pt}
    \renewcommand{\arraystretch}{1.2}
    
    \begin{minipage}{0.48\textwidth}
    \centering
    \small
    \begin{tabular}{|l|c|}
    \hline
    $n$ & $G$ \\ \hline
    0  & 0 \\ \hline
    1  & 1 \\ \hline
    2  & 2 \\ \hline
    3  & 3 \\ \hline
    4  & 4 \\ \hline
    5  & $\{4|0\}$ \\ \hline
    6  & $\{4, \{4|0\} | 0\}$ \\ \hline
    7  & $\{4, \{4, \{4|0\} | 0\} | 0\}$ \\ \hline
    
    \end{tabular}
    \end{minipage}
    \hfill
    \begin{minipage}{0.48\textwidth}
    \centering
    \small
    \begin{tabular}{|l|c|}
    \hline
    $n$ & $G$ \\ \hline
    8  & 0 \\ \hline
    9  & 1 \\ \hline
    10 & 2 \\ \hline
    11 & 3 \\ \hline
    12 & 4 \\ \hline
    13 & $\{4|0\}$ \\ \hline
    14 & $\{4, \{4|0\} | 0\}$ \\ \hline
    15 & $\{4, \{4, \{4|0\} | 0\} | 0\}$ \\ \hline
    \end{tabular}
    \end{minipage}
\end{table}

\begin{theorem}[Canonical Form Periodicity in TS]\label{thm: CF periodicity in TS at b-a}
    Fix $a,b$ and let $\kappa =b-a$. Let $G\coloneq\ts_{\kappa}(n;a,b)$ where $a<b$. Let $t= n \bmod (b+1)$, with $0\le t\le b$. Then $G=\ts_{\kappa}(t;a,b)$. 
\end{theorem}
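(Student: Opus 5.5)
The plan is to prove the slightly stronger statement that $\ts_\kappa(n;a,b)=\ts_\kappa(n-(b+1);a,b)$ for every $n\ge b+1$. The theorem then follows by iterating down to $t=n\bmod(b+1)$. Write $G_n:=\ts_\kappa(n;a,b)$.

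I will prove $G_n-G_m=0$, where $m=n-(b+1)$, by strong induction on $n$. By the standard characterization, it suffices to show that $G_n-G_m\in\pp$, i.e.\ the second player wins $G_n+(-G_m)$. For this I exhibit a response to every first move, from either player, that lands in a position equal to $0$.

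The key arithmetic fact, and the reason the knife's edge $\kappa=b-a$ is special, is complementarity with respect to $b+1$. If $i\in[a]$, then $b+1-i\in\{b-a+1,\dots,b\}=\{\kappa+1,\dots,b\}$, which is exactly Right's subtraction set. Conversely, if $j\in\{\kappa+1,\dots,b\}$, then $b+1-j\in[a]$, which is exactly Left's subtraction set. So any move by one player can be "completed" by the other player to a total removal of exactly $b+1$ pebbles.

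The case analysis for the first move is as follows.
\begin{enumerate}
\item \emph{Left moves in $G_n$, removing $i\le a$.} Right answers in the same component by removing $b+1-i$. This is legal because $n-i\ge b+1-i$. The result is $G_m-G_m=0$.
\item \emph{Right moves in $G_n$, removing $j$.} Symmetrically, Left answers by removing $b+1-j\le a$. This is legal because $n-j\ge b+1-j$, and the result is again $G_m-G_m=0$.
\item \emph{A move in $-G_m$.} A Left move there is a Right move in $G_m$, removing some $j\in\{\kappa+1,\dots,b\}$ with $j\le m$. Right answers by the same Right move in $G_n$, removing $j$. This is legal since $n>m\ge j$, and it reaches $G_{n-j}-G_{m-j}$. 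The two heaps still differ by $b+1$ and $n-j<n$, so the induction hypothesis gives $G_{n-j}-G_{m-j}=0$.
\item \emph{Right moves in $-G_m$.} This case is symmetric to the previous one: Left copies the move with her own move $i$ in $G_n$ and reaches $G_{n-i}-G_{m-i}=0$ by induction.
\end{enumerate}
In every case the second player reaches $0$, so $G_n-G_m\in\pp$, i.e.\ $G_n=G_m$. The base of the induction is covered automatically: when $m$ is too small for a copying move, there is no such first move to answer.

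I do not expect a serious obstacle. The main care point is the legality bookkeeping: the complementary reply must be made in the $G_n$ component, where the heap is at least $b+1$, rather than in $-G_m$, where the heap may be small. Likewise, the copying replies must go into $G_n$ because it is the larger heap.

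Note that this argument yields equality of game values, which is what the theorem asserts. Equality of canonical forms follows because canonical forms are unique. As a by-product it also recovers part~\ref{item:thm: outcomes of TS:2} of Theorem~\ref{thm: outcomes of TS} from the small cases $0\le t\le b$.
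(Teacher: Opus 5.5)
Your proof is correct and is essentially the paper's argument. Both rest on the same two ingredients: answering a move of $i\in[a]$ or $j\in\{\kappa+1,\dots,b\}$ in the large heap with the complementary removal that makes $b+1$ in total, and copying any move made in the negated component into the large heap, so that the difference is a $\pp$-position. The only cosmetic difference is that you compare $G_n$ with $G_{n-(b+1)}$ and close the copying case immediately by strong induction, whereas the paper compares $G$ directly with $\ts_\kappa(t;b,a)$ and phrases the copying as a continued mimic strategy.
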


Recall when playing disjunctive sums of games, information about outcome alone (Theorem~\ref{thm: outcomes of TS}) is often insufficient. And often canonical forms are not accessible. Still one needs additional information; then atomic weight comes to play.

Table~\ref{tab:aw of ts(n,3,7)} suggests a clear pattern in the atomic weights of $\ts_\tau(n;3,7)$. For $\tau<b-a$, the pattern begins exactly at heap size
$n'=\left\lfloor\frac{b-a}{b-a-\tau}\right\rfloor\!(a+\tau+1)$. By Theorem~\ref{thm: outcomes of TS}, starting from $n'$, all larger heap sizes have outcome $\rp$. If a heap size is larger than or equal to $n'$, Right can postpone making a move until Left is able to reduce the heap below $n'$. Since Left removes at most $a$ tokens per move, she needs at least
\[
\left\lceil\frac{n-(n'-1)}{a}\right\rceil
\]
to reduced the heap below $n'$. Thus, Right can delay his move for exactly $\left\lceil\frac{n-(n'-1)}{a}\right\rceil-1$ many turns, suggesting that the atomic weight is
\[
\left\lceil\frac{n-n'+1}{a}\right\rceil-1=\left\lfloor\frac{n-n'}{a}\right\rfloor.
\] 

Our fourth main result proves this formula for all $\tau<(b-a)/2$. Furthermore, we conjecture that the formula remains valid for $(b-a)/2\le\tau<b-a$. Note that {\sc TS} games are not all-small: if $\tau>0$ then a heap of size one has value $1$. Thus, atomic weights  do not exist for all possible heap sizes. However, it turns out that many {\sc TS} games are atomic, and have atomic weight.

\begin{theorem}[Atomic Weight of {\sc TS}]\label{thm: AW of TS}
    Let $a,b$ and $\tau$ are positive integers such that $a<b$. If $1\le \tau< (b-a)/2$, then for all $n\ge a+\tau+1$, 
    \begin{align}\label{eq:w}
    \aw\big(\ts_\tau(n;a,b)\big)=-\left\lfloor \frac{n-(a+\tau+1)}{a}\right\rfloor.
    \end{align}
\end{theorem}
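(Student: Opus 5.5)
We argue by strong induction on $n\ge a+\tau+1$. At each step we verify the defining inequalities of atomic weight through far-star comparisons, rather than through the recursive formula $\aw(G)=\{\aw(G^L)-2\mid \aw(G^R)+2\}$. The reason is that small heaps are not infinitesimal: $\ts_\tau(k;a,b)=k$ for $k\le\tau$, so the options of our heaps need not have atomic weights. Write $n'=a+\tau+1$ and $m=m(n)=\lfloor (n-n')/a\rfloor$. The hypothesis $\tau<(b-a)/2$ gives $\frac{b-a}{b-a-\tau}<2$. Hence the threshold in Theorem~\ref{thm: outcomes of TS}\eqref{item:thm: outcomes of TS:3} is exactly $n'$, so $G\in\rp$ for all $n\ge n'$. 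Following the reduction behind Lemma~\ref{thm: outcome using aw} and the $\sim_{\cgfarstar}$ relation, the claim $\aw(G)=-m$ amounts to two outcome statements. In the first, $G+m\cdot\cgup+\cgfarstar$ is won by Left playing second (equivalently $\ge 0$ far-star-wise); in the second, $G+(m-1)\cdot\cgup+\cgfarstar$ is won by Right playing second. By symmetry of the argument, it suffices to build explicit strategies in sums of the form $G+k\cdot\cgup+\cgfarstar$.

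\textbf{Step 1: the structure of small heaps.} First we tabulate what we need about heaps $k<n'+b$. For $k\le\tau$, $\ts_\tau(k;a,b)=k$ is a positive integer. For $\tau+1\le k\le b$, Right has a move to $0$, and Left's best options are to heaps of size at most $\tau$, i.e.\ to positive numbers. We will show that for $\tau<k<n'$ these heaps are infinitesimally close to small positive games that Left wins. Concretely, $\ts_\tau(k;a,b)+j\cdot\cgdown+\cgfarstar$ is a Left win for every fixed $j$ once Left can reach a positive integer in one move, i.e.\ when $k\le a+\tau$. This is exactly why $n'=a+\tau+1$ is the first heap from which Left cannot reach a positive number in one move.

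\textbf{Step 2: Right's strategy, showing $\aw\le -m$.} Right plays in $G+(m-1)\cdot\cgup+\cgfarstar$ using the ``delay'' strategy from the heuristic before the theorem. While the heap stays at least $n'$, he answers Left's heap moves in the $\cgup$'s or the far star. Once Left brings the heap below $n'$, he moves the heap directly to $0$ or to a heap from which Left cannot profit. Such a target exists because every heap $k$ with $n'\le k\le n'+a-1+b$ has a Right option into $[0,\,k-\tau-1]$, which contains $0$ or a suitable infinitesimal heap. Left needs $m+1$ heap moves to get below $n'$, while Right needs only the $m-1$ spare tempi plus the usual two-ahead slack. The induction hypothesis covers the intermediate heaps that Left creates.

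\textbf{Step 3: Left's strategy, showing $\aw\ge -m$.} Left plays in $G+m\cdot\cgup+\cgfarstar$ and must show that Right's large jumps do not gain him extra tempi. This is the main obstacle. Right may jump from $n$ to any heap in $[n-b,\,n-\tau-1]$, including heaps just below $n'$ and heaps in the range $(\tau,n')$. By Step 1, the heaps below $n'$ are good for Left. For heaps at or above $n'$, the induction hypothesis gives atomic weight $-m(n'')\ge -m(n)+\lceil(\tau+1)/a\rceil-1$. So Right's move never increases his delay count, and Left can follow a mirror/tempo strategy. The inequality $\tau<(b-a)/2$ should be used precisely here: it rules out Right jumping to a heap whose residue gives him an extra free move, a phenomenon that does happen near the knife's edge (cf.\ Theorem~\ref{thm: CF periodicity in TS at b-a}).

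I would first try to verify this step by checking that the worst Right jump, to $n-b$ or to $n'-1$, leaves Left a response restoring the invariant ``heap $\ge n'$ with $m$ correct'' or ``heap $<n'$ with Left to move favorably''.
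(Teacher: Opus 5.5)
\textbf{There is a genuine gap.} Your reduction to outcome statements is off by one on Left's side, so Step~3 sets out to prove something false. By Theorem~\ref{thm: check farstareq}, $\aw(G)=-m$ means $\cgdown\;\cgfarstar<G+m\cdot\cgup<\cgup\;\cgfarstar$. Equivalently, Left wins $G+(m+1)\cdot\cgup\;\cgfarstar$ and Right wins $G+(m-1)\cdot\cgup\;\cgfarstar$, each regardless of who starts.

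Your second condition is fine. A sum containing a remote star is never $0$, so ``Right wins playing second'' already gives $<0$.

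Your first condition has only $m$ ups. It is strictly stronger than needed, and it already fails at $n=a+\tau+1$. There $m=0$, and Right, moving first in $G+\cgfarstar$, plays $\cgfarstar\to 0$. This leaves $G\in\rp$ with Left to move, so Left loses.

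Step~1 is also wrong. Heaps with $\tau<k\le a+\tau$ are hot $\np$-positions (e.g.\ $\ts_\tau(\tau+1;a,b)=\{\tau\mid 0\}$), not infinitesimally close to positive games. Moreover, for $j\ge1$ Right wins $\ts_\tau(k;a,b)+j\cdot\cgdown+\cgfarstar$ moving first by taking the whole heap, which is legal since $k\le a+\tau<b$. What the argument actually needs is that a single up tips these heaps: $\ts_\tau(k;a,b)+\cgup\;\cgfarstar>0$ for $\tau+1\le k\le \tau+a$ (Lemma~\ref{lem: help to shorten main theorem}). Left's replies there always bring the heap down to at most $\tau$, i.e.\ to a nonnegative integer.

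Beyond this, Steps~2 and~3 are tempo heuristics rather than strategies, and the mechanism that closes the induction is missing. The paper proves both corrected inequalities by induction on $n$. It applies additivity (Lemma~\ref{lem: aw of sum of games}) and the two-ahead rule (Theorem~\ref{thm: outcome using aw}) to smaller heaps, using that $w(n)=\lfloor (n-a-\tau-1)/a\rfloor$ is nondecreasing with $w(n)=w(n-a)+1$.

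\emph{Left's side}, in $G(n)+(w(n)+1)\cdot\cgup\;\cgfarstar$:
\begin{itemize}
\item Whether moving first or replying to Right burning an up, Left removes $a$ tokens. This closes by induction (atomic weight $2$, or the inductive statement at $n-a$). Once $n-a\le a+\tau$, it closes by the lemma or a similar direct check.
\item A Right jump to $n-j\ge a+\tau+1$ leaves Left to move with atomic weight $w(n)-w(n-j)+1\ge1$.
\item A jump to $n-j\le a+\tau$ is covered by the lemma, or by $G(n-j)=n-j$ being a nonnegative integer.
\end{itemize}

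\emph{Right's side}, in $G(n)+(w(n)-1)\cdot\cgup\;\cgfarstar$:
\begin{itemize}
\item After Left removes $i$ tokens, Right burns one up and uses $w(n-a)-1\le w(n-i-a)$ with induction.
\item For small $n$ he instead zeroes the $\cgfarstar$ or $\cgdown\;\cgfarstar$ component, leaving $G(n-i)\in\rp$, or takes the whole heap when $n-i\le a+\tau<b$.
\item If Left moves in the up-component, the position reduces, after at most one Right reply, to bare $G(n)\in\rp$.
\end{itemize}

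Finally, $\tau<(b-a)/2$ is used essentially only to get $\hbeta=1$, i.e.\ $G(k)\in\rp$ for all $k\ge a+\tau+1$, which you derived correctly. There is no residue effect for Step~3 to exclude.
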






\begin{table}[t!]
\centering
\caption{Atomic weights of $\ts_\tau(n;3,7)$ for $1 \le n \le 34$ and truncation levels $\tau\le 4$. A cell is colored red, if the cell and all the cells below it are $\rp$-positions.}
\label{tab:aw of ts(n,3,7)}

\setlength{\tabcolsep}{4pt}
\renewcommand{\arraystretch}{1.2}

\begin{minipage}{0.49\textwidth}
\centering
\small
\begin{tabular}{|l|*{6}{c|}}
\hline
\textbf{$n \backslash \tau$} & 1 & 2 & 3 & 4  \\ \hline
1 &      &      &      &           \\ \hline
 2 &      &      &      &         \\ \hline
 3 &      &      &      &          \\ \hline
 4 &      &      &      &           \\ \hline
 5 & \cellcolor{red!25}0    &      &      &           \\ \hline
 6 & \cellcolor{red!25}0    & 0    &      &          \\ \hline
 7 & \cellcolor{red!25}0   & 0    & 0    &          \\ \hline
 8 & \cellcolor{red!25}-1   & 0    & 0    & 0       \\ \hline
 9 & \cellcolor{red!25}-1   & 0    &      &           \\ \hline
 10 &\cellcolor{red!25} -1   & 1    &      &        \\ \hline
11 & \cellcolor{red!25}-2   & 1    &      &           \\ \hline
12 & \cellcolor{red!25}-2   & \cellcolor{red!25}0    &      &         \\ \hline
13 & \cellcolor{red!25}-2   & \cellcolor{red!25}0    &      &          \\ \hline
14 & \cellcolor{red!25}-3   & \cellcolor{red!25}0    & 0    &          \\ \hline
15 & \cellcolor{red!25}-3   & \cellcolor{red!25}-1   & 0    &           \\ \hline
16 & \cellcolor{red!25}\;-3 \;  &\; \cellcolor{red!25}-1\;   &\; 0\;    &\; 0\;         \\ \hline
17 & \cellcolor{red!25}-4   & \cellcolor{red!25}-1   &      &           \\ \hline
\end{tabular}
\end{minipage}
\hfill
\begin{minipage}{0.49\textwidth}
\centering
\small
\begin{tabular}{|l|*{6}{c|}}
\hline
\textbf{$n \backslash \tau$} & 1 & 2 & 3 & 4 \\ \hline
18 & \cellcolor{red!25}-4   &\cellcolor{red!25} -2   &      &            \\ \hline
19 & \cellcolor{red!25}-4   & \cellcolor{red!25}-2   &      &          \\ \hline 
20 & \cellcolor{red!25}-5   &\cellcolor{red!25} -2   &      &           \\ \hline
21 & \cellcolor{red!25}-5   &\cellcolor{red!25} -3   & 0    &          \\ \hline
22 &\cellcolor{red!25} -5   & \cellcolor{red!25}-3   & 0    &            \\ \hline
23 & \cellcolor{red!25}-6   & \cellcolor{red!25}-3   & 0    &           \\ \hline
24 &\cellcolor{red!25}-6   &\cellcolor{red!25}-4   & 0    & 0         \\ \hline
25 &\cellcolor{red!25} -6   & \cellcolor{red!25}-4   & 0    &           \\ \hline
26 &\cellcolor{red!25} -7   & \cellcolor{red!25}-4   & 1    &           \\ \hline
27 &\cellcolor{red!25} -7   & \cellcolor{red!25}-5   & 1    &            \\ \hline
28 &\cellcolor{red!25} -7   & \cellcolor{red!25}-5   &\cellcolor{red!25} 0    &            \\ \hline
29 &\cellcolor{red!25} -8   & \cellcolor{red!25}-5   &\cellcolor{red!25} 0    &            \\ \hline
30 &\cellcolor{red!25} -8   & \cellcolor{red!25}-6   &\cellcolor{red!25} 0    &            \\ \hline
31 & \cellcolor{red!25}-8   &\cellcolor{red!25} -6   & \cellcolor{red!25}-1   &            \\ \hline
32 & \;\cellcolor{red!25}-9\;   & \cellcolor{red!25}\;-6\;   & \cellcolor{red!25}\;-1\;   & \;0 \;         \\ \hline
33 & \cellcolor{red!25}-9   &\cellcolor{red!25} -7   & \cellcolor{red!25}-1   &            \\ \hline
34 & \cellcolor{red!25}-9   &\cellcolor{red!25} -7   & \cellcolor{red!25}-2   &           \\ \hline
\end{tabular}
\end{minipage}
\end{table}


The rest of the paper is organized as follows:\begin{enumerate}
    \item In Section~\ref{sec: literature review}, we cite the relevant papers and the papers that inspired this work.
    \item In Section~\ref{sec: CF of FS}, we prove our first main result, Theorem~\ref{thm: CF of FS}, about the canonical form of {\sc Full Support}.
    \item In Section~\ref{sec: AW of FS}, we compute the atomic weight of {\sc FS} proving our second main result, Theorem~\ref{thm: aw of FS}.
    \item In Section~\ref{sec: TS}, we introduce {\sc Truncated Support} and prove our third main result, Theorem~\ref{thm: outcomes of TS}, part-wise in subsections~\ref{subsec: higher truncation level}, \ref{subsec: balanced truncation level} and \ref{subsec: lower truncation level}. Along with this, we also prove Theorem~\ref{thm: CF periodicity in TS at b-a}, which says that the canonical form of {\sc TS} is periodic at the knife's edge.
    
    \item In Section~\ref{sec: AW of TS}, we prove our fourth main result, Theorem~\ref{thm: AW of TS}, which gives a partial formula for the atomic weights of {\sc TS}. 
\end{enumerate}




\section{Literature review}\label{sec: literature review}

{\sc Impartial Subtraction} games have been extensively studied and are well known for exhibiting periodic outcome sequences. In contrast, {\sc Partizan Subtraction} games are significantly more complex because the players have different move sets. Their analysis often requires algebraic tools from CGT, such as canonical forms, reduced canonical forms, temperatures, and atomic weights.

The study of {\sc Partizan Subtraction} began with the work of Fraenkel and Kotzig \cite{FA} (1987), who introduced partizan octal games. They proved that every partizan subtraction game with finite subtraction sets has an ultimately periodic outcome sequence and introduced the notion of dominance, which we refer to as Left- and Right-domination.


Mesdal et al.~\cite{M} later investigated finite partizan subtraction rulesets, where players may optionally split the heap after removal, through reduced canonical forms, extending the structural understanding of these games beyond their outcomes.

Larsson et al. \cite{LMNS} studied a different class of partizan subtraction games in which the players have complementary infinite subtraction sets generated by the $\pp$-positions of Wythoff Nim and Fibonacci sequences. They prove that every position is either a reduced canonical form switch or a canonical form number.

More recently, Duchêne et al.~\cite{DHNP} analyzed partizan subtraction games with subtraction sets of size two. Besides Left- and Right-domination, they identified three additional asymptotic behaviors of the outcome sequence: weak dominance, fairness, and ultimate impartiality. Weak dominance occurs when the outcome sequence is eventually periodic and the period contains either $\lp$ or $\rp$, but not both. The {\sc TS} ruleset at truncation level $b-a$ studied in this paper exhibits weak dominance, since its periodic outcome sequence contains $\lp$-positions but no $\rp$-positions. They prove NP-hardness of {\sc Partizan Subtraction}, if the ruleset is included in the input, by reduction to the unbounded knapsack problem. 



The {\sc Full Support} ruleset also belongs to the class of cumulative games, introduced by Larsson, Meir, and Zick \cite{URYCG2020}. In cumulative games, player cumulations are part of the rules of how to move; they may contribute to the players payoffs when the game ends. {\sc Wealth Nim} (also called {\sc Wealth Pebbles}) is one such ruleset family. {\sc Full Support} with $S_L=[a]$ and $S_R=[b]$ is equivalent to {\sc Wealth Nim} with players' wealths $a$ and $b$, and where there is no cumulative effect. 
\begin{figure}
    \centering
    \includegraphics[width=0.5\linewidth]{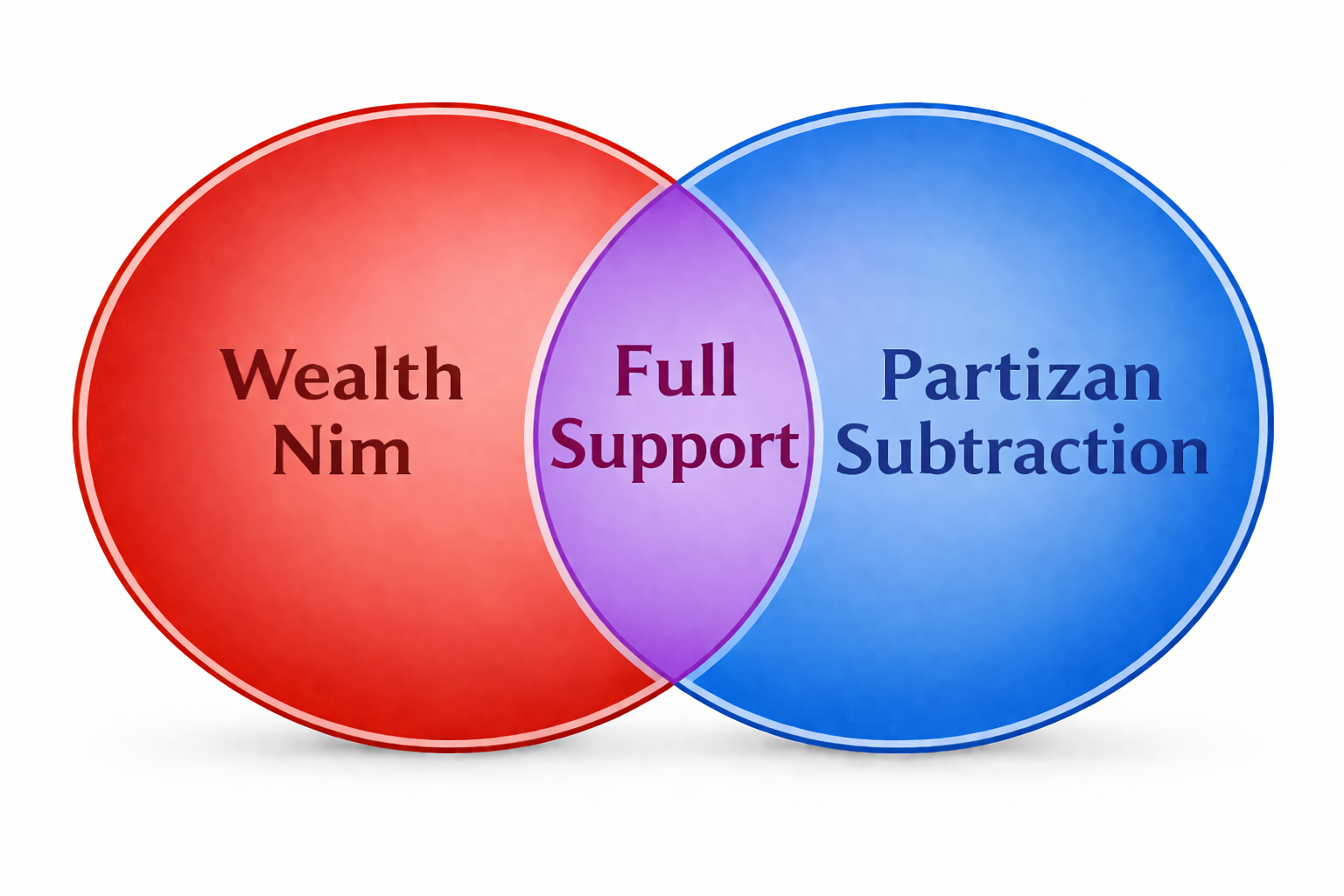}
    \caption{{\sc Full Support} lies at the intersection of {\sc Partizan Subtraction} and Cumulative Games.}
    \label{fig:placeholder}
\end{figure}

Another example of a cumulative game is {\sc Robin Hood} \cite{BDL}. 
Similar to {\sc FS}, Players in {\sc Robin Hood} can remove any number of tokens up to their wealth. However, the wealths of the players remain fixed throughout a game in {\sc FS}, whereas, in {\sc Robin Hood}, if a player remove $m$ tokens from the heap, the opponent's wealth decreases by $m$. Therefore, players want to play first in {\sc Robin Hood} and decrease their opponent's wealth. 




When both players have the same subtraction set, {\sc Full Support} becomes impartial. In this case, the nim-values were determined in \cite{BCG2004}. We restate the formula and include its folklore proof for completeness.


\begin{theorem}[Impartial FS]\label{thm: CF of FS for equal wealth}
Let $a\in\Nat_0$. For all  $n\in \Nat$,  let $r=n \bmod (a+1)$. Then $\fs(n;a,a)~=~*r.$
\end{theorem}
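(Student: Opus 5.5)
We argue by strong induction on $n$, using the standard fact that for impartial games the value of a position is the nimber given by the mex of the nim-values of its options; this is the mex rule, a consequence of the Sprague--Grundy theory reviewed in Appendix~\ref{sec: prelims}. In $\fs(n;a,a)$ both players have the subtraction set $[a]$, so the game is impartial. The options of $\fs(n;a,a)$ are exactly $\fs(n-k;a,a)$ for $1\le k\le \min(a,n)$. The degenerate case $a=0$ is immediate: no moves exist, so $\fs(n;0,0)=0=*(n\bmod 1)$.

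The base case is $n=0$, with no options, so the value is $0=*0$. For the inductive step, assume $\fs(m;a,a)=*(m\bmod(a+1))$ for all $m<n$. We then need to show that
\[
\operatorname{mex}\{(n-k)\bmod (a+1) : 1\le k\le \min(a,n)\}=n\bmod(a+1).
\]
We split into two cases.

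\emph{Case $n\le a$.} The options are the heaps $0,1,\dots,n-1$, which by induction have values $*0,\dots,*(n-1)$. The mex of these is $n=n\bmod(a+1)$.

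\emph{Case $n>a$.} The $a$ consecutive integers $n-1,\dots,n-a$ reduce modulo $a+1$ to every residue except $r=n\bmod(a+1)$. Hence the option values are exactly $\{*j : 0\le j\le a,\ j\ne r\}$, whose mex is $r$.

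In both cases $\fs(n;a,a)=*r$, which completes the induction. The only delicate point is the residue-counting step: we need that $a$ consecutive integers miss exactly one residue class modulo $a+1$, namely the class of $n$ itself. This holds because the $a+1$ consecutive integers $n-a,\dots,n$ form a complete residue system modulo $a+1$, and $n$ is the one removed.
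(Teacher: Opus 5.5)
Your proof is correct and follows the same route as the paper. Both argue by induction on $n$, treat $n\le a$ as a single {\sc Nim} heap, and for $n>a$ apply the mex rule using the fact that the residues of $n-1,\dots,n-a$ modulo $a+1$ are exactly $\{0,\dots,a\}\setminus\{r\}$. Your explicit treatment of $a=0$ and of the base case $n=0$ is a harmless addition that the paper leaves implicit.
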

\begin{proof}

If $n\le a$, the game is {\sc nim}, and otherwise we  have 
\begin{align*}
\fs(n;a,a)
&= \{\fs(n-1;a,a), \fs(n-2;a,a), \dots, \fs(n-a;a,a)\} \\
&=\{*((n-1)\bmod (a+1)),\;*((n-2)\bmod(a+1)),\; \dots,\; *((n-a)\bmod(a+1))\}\\
&= \{*((r-1)\bmod(a+1)), *((r-2)\bmod(a+1)), \dots, *((r-a)\bmod(a+1))\} \\
&= *r.
\end{align*}

The second equality follows by induction, and the third is by the statement. For the final equality, observe that
\[
\{r,\; (r-1)\bmod(a+1),\; (r-2)\bmod(a+1), \dots,\; (r-a)\bmod(a+1)\}
= \{0,1,\dots,a\}.
\]
Hence, $\mathrm{mex}\{(r-1)\bmod(a+1), (r-2)\bmod(a+1), \dots, (r-a)\bmod(a+1)\}=r$.
\end{proof}

We next study the canonical form of {\sc Full Support} when the players have different subtraction sets.

\section{Canonical forms of Full Support}\label{sec: CF of FS}

We assume $a<b$ without loss of generality. 
\begin{definition}[Full Support]
    {\sc Full Support} ({\sc FS}) is a partizan subtraction ruleset played on a single heap, where the subtraction sets of Left and Right are defined by the positive integers $a$ and $b$ with $S_L=[a]$ and $S_R=[b]$. For a heap size $n$, the game is denoted by $\fs(n;a,b)$.
\end{definition}

Symmetrically, in $\fs(n;b,a)$, the subtraction sets are $S_L=[b]$ and $S_R=[a]$.
We start by exploring simple {\sc Full Support} positions.

\begin{observation}\label{obs: fs positions for small n}
    Let $G=\fs(n;a,b)$ be an instance of {\sc FS}. Then 
    \begin{enumerate}
        \item $G$ is all-small;
        \item $G=*n$ if $n\le \min(a,b)$.
    \end{enumerate}

    For the first statement, note that both players can remove one pebble whenever $n>0$, so every nonzero position has options for both players and hence, $G$ is all-small. For the second statement, if $n\le \min(a,b)$, then both players may remove any number of pebbles from $1$ to $n$. Thus $G$ coincides with a single heap of {\sc Nim} of size $n$, and therefore $G=*n$.

    \end{observation}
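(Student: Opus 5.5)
We prove the two items separately, each by a short induction on the heap size $n$, using only the rules of {\sc FS} and the definition of nimbers.

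For item (1), recall that a game is all-small if every subposition either has options for both players or has no options at all. Every subposition of $\fs(n;a,b)$ is again an {\sc FS} position $\fs(m;a,b)$ with $0\le m\le n$. We split into two cases.
\begin{itemize}
\item If $m=0$, neither player can move, so the position is the zero game $0=\{\varnothing\mid\varnothing\}$.
\item If $m\ge 1$, then $1\in[a]$ and $1\in[b]$ because $a,b\ge 1$. So both Left and Right have the option $\fs(m-1;a,b)$, and both option sets are nonempty.
\end{itemize}
Hence every subposition is either empty for both players or nonempty for both, which is exactly the all-small condition. No induction beyond this subposition description is really needed.

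For item (2), we use strong induction on $n\le\min(a,b)$. The base case $n=0$ is immediate, since $\fs(0;a,b)=0=*0$. For $1\le n\le \min(a,b)$, every removal $k\in\{1,\dots,n\}$ is allowed for both players, because $n\le a$ and $n\le b$. Removals larger than $n$ are impossible. Therefore
\[
\fs(n;a,b)=\Set{\fs(n-1;a,b),\dots,\fs(0;a,b)\mid \fs(n-1;a,b),\dots,\fs(0;a,b)}.
\]
Each heap size $m<n$ also satisfies $m\le\min(a,b)$, so by the induction hypothesis $\fs(m;a,b)=*m$. Substituting equal games for options preserves equality, so
\[
\fs(n;a,b)=\Set{0,*,\dots,*(n-1)\mid 0,*,\dots,*(n-1)}=*n,
\]
which is the definition of the nimber $*n$. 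Equivalently, this is a single {\sc Nim} heap of size $n$.

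There is no real obstacle. The only point needing care is that the induction hypothesis applies to every option: the heap sizes $n-k$ are all at most $\min(a,b)$, so the hypothesis covers them. For this reason the hypothesis $n\le\min(a,b)$ must be carried through the induction, not only imposed at the top level.
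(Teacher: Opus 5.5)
Your proof is correct and follows essentially the same argument as the paper. For item (1), you use that both players can always remove one pebble from a nonempty heap. For item (2), your strong induction spells out what the paper states directly: for $n\le\min(a,b)$ the position coincides with a single Nim heap of size $n$, so it equals $*n$.
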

Next we prove that Right wins {\sc FS} for sufficiently large heap sizes. 

\begin{proposition}\label{prop: stronger wins}
    Let $a,b\in \Nat$ with $a<b$. If $n>a$, then $\fs(n;a,b)\in \rp$. 
\end{proposition}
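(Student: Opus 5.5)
We must show that Right wins $\fs(n;a,b)$ with $n>a$ both when Left moves first and when Right moves first. The plan is to use explicit strategies, following the narrative's idea: Right can always take the whole heap once it is at most $b$, and otherwise he takes a single pebble.

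\emph{Right moves first.} There are two cases.
\begin{itemize}
\item If $n\le b$, Right removes all $n$ pebbles and wins immediately.
\item If $n>b$, Right removes one pebble, leaving $n-1\ge b>a$ pebbles.
\end{itemize}
In the second case Left now faces a heap of size greater than $a$, so she cannot empty it. She leaves some $m$ with $1\le m$.

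\emph{Invariant.} The key step is the following invariant for Right's strategy: whenever it is Left's turn, the heap has more than $a$ pebbles. Left's move then leaves a nonempty heap $m$, and Right replies as follows.
\begin{itemize}
\item If $m\le b$, Right removes all pebbles and wins.
\item Otherwise $m>b$, and Right removes one pebble. The heap becomes $m-1\ge b>a$, so the invariant is restored.
\end{itemize}
Each round strictly decreases the heap, so play terminates. Left can never make the last move, since she always faces more than $a$ pebbles. Hence Right makes the last move.

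\emph{Left moves first.} The starting heap $n>a$ satisfies the invariant directly, so Right applies the same response strategy from the first move onward.

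Both starting cases give a Right win, so $\fs(n;a,b)\in\rp$.

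The only delicate point is the inequality $m-1\ge b>a$ in the case $m>b$. It uses $a<b$ strictly, and it is exactly what keeps Left from ever reaching $0$.
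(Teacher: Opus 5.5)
Your proof is correct and uses the same strategy as the paper. Right empties the heap when it has at most $b$ pebbles and otherwise removes one pebble, so Left always faces at least $b>a$ pebbles and can never make the last move. Your explicit invariant and your separate treatment of the two starting players simply spell out the paper's brief argument in more detail.
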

\begin{proof}

    On Right's turn, if the heap size is at most \(b\), he removes all the tokens and wins the game. Otherwise, he removes exactly one token. Hence after Right's move, the heap is at least $b$, and therefore Left cannot remove the full heap. Moreover, by assumption $n>a$, so Left cannot win by starting. 
\end{proof}

Observe that the in the negative of a game, the subtraction sets get swapped, i.e. $-\fs(n;a,b)=\fs(n;b,a)$. In the next Lemma, we compare different {\sc FS} positions with the same Left and Right subtraction sets. We will use it later to find the canonical form of {\sc FS}.
\begin{lemma}\label{lem: game position comparison FS}
    Fix $a,b\in \Nat$, with $a<b$, and let $G(n):=\fs(n;a,b)$. Suppose that $n> m\ge 0$. 
    \begin{enumerate}
        \item\label{item: lem: game position comparison FS:1} if $n-m\le a$, then $G(n)\cgfuzzy G(m)$;
        \item if $n-m > a$, then  $G(n)<G(m)$.
    \end{enumerate}
\end{lemma}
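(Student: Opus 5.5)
The plan is to analyse the difference game $G(n)-G(m)=\fs(n;a,b)+\fs(m;b,a)$, using $-\fs(m;a,b)=\fs(m;b,a)$, and to give explicit strategies. For part (1) we show both players win moving first. For part (2) we show Right wins regardless of who starts.

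\textbf{Part (1), $0<n-m\le a$.} Since $n-m\le a\le b$, the first player, Left or Right, can remove $n-m$ pebbles from the heap of size $n$. This leaves $G(m)-G(m)=0$, and the opponent, now to move, loses. So the first player always wins, and $G(n)\cgfuzzy G(m)$.

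\textbf{Part (2), $n-m>a$.} We show $G(n)-G(m)\in\rp$. Write the position as a pair of heaps $(x,y)$: in heap $x$ Left removes at most $a$ and Right at most $b$, and in heap $y=m$ Left removes at most $b$ and Right at most $a$. Right's strategy is a mirroring one. He maintains the invariant $x-y>a$ after each Left move and $x-y\ge b+1$ after each of his own moves, except that when $x\le b$ he wins by reasoning on $x$ directly. Concretely:
\begin{itemize}
\item If Left removes $k\le a$ from $x$, the difference drops by at most $a$. Right then answers in $x$.
\item If Left removes $k\le b$ from $y$, the difference increases.
\end{itemize}

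A cleaner way to organise this is via Proposition~\ref{prop: stronger wins}. A first attempt would be to show only $G(n)\le G(m)$ and rule out equality. But equality would require $G(n)$ and $G(m)$ to have the same outcome, and both lie in $\rp$ when $m>a$, so this gives no contradiction. We therefore need the strategy directly.

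\textbf{Right's strategy.} Right aims to move so that the difference of the heap sizes $x-y$ becomes $0$, giving the zero game. He can do this immediately if $x-y\le a$, by taking $x-y$ from $x$, which is allowed since $a\le b$. He can also do this if $y>x$ and $y-x\le a$, by taking $y-x$ from $y$. Otherwise he stalls: if $x\le b$ he may take all of $x$, leaving $-G(y)$, which is good for Right only in certain cases. So the invariant must be chosen carefully.

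The intended invariant after Right's move is $x-y\in\{0\}\cup[a+1,\infty)$, with $y\le a$ handled as nim. If Left moves when $x-y>a$, the new difference is at least $1$ after a move in $x$, or larger after a move in $y$. If the difference is now at most $a$, Right zeroes it. If it is still above $a$, Right must make a neutral move. He takes $1$ from $y$ if $y>0$, which raises the difference. If $y=0$, the position is $G(x)$ with $x>a$, which is in $\rp$ by Proposition~\ref{prop: stronger wins}, and Right wins there.

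\textbf{Main obstacle.} The main obstacle is verifying that the stalling move of taking $1$ from $y$ never lets Left gain the last move. Left, moving in $y$ with removals up to $b$, could reset the difference to exactly $a+1$ or similar. I would handle this by checking that from any difference $d\ge a+1$ with Left to move, every Left move yields either a difference in $[1,a]$, which Right zeroes, or a difference still at least $a+1$. Because the total heap size strictly decreases, induction then finishes the argument.
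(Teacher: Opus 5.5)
Your final strategy is correct and is essentially the paper's proof. Both argue by induction on the total heap size: Right answers on the smaller heap (or wins by Proposition~\ref{prop: stronger wins} when it is empty), and equalizes via item~(1) when a Left move on the larger heap brings the gap into $[1,a]$. The ``main obstacle'' is immediate---Left's moves on the $m$-heap only widen the gap, and on the $n$-heap she narrows it by at most $a$, so it stays at least $1$---and the abandoned invariants (e.g.\ $x-y\ge b+1$, ``$y\le a$ handled as nim'') should simply be deleted.
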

\begin{proof}
    Consider item~$(1)$. We show that the first player wins $G(n)-G(m)$. Since \(0< n - m \leq a\), the starting player equalizes the heap sizes. This leads to the game $G(m)-G(m) =0$. Hence, the first player wins \( G(n) - G(m) \).

    Consider item (2). It suffices to prove that Right wins the game $G(n) - G(m)$. 

    If Right starts, he plays on the smaller heap $m$ and wins by induction, unless $m=0$, in which case, Right wins by Proposition~\ref{prop: stronger wins}.

    If Left starts and plays on the smaller heap $m$, Right wins by the same argument as before. If Left plays instead on the larger heap, she can remove at most $a$ tokens, hence again by induction, Right wins.  
\end{proof}

An interesting consequence of Lemma~\ref{lem: game position comparison FS} (1) is that none of the Left options dominate the other.

\begin{lemma}[Left Domination]\label{cor: no domination}
    The Left options of $\fs(n;a,b)$ are confused if $a<b$.
\end{lemma}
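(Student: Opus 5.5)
The plan is to read this directly off Lemma~\ref{lem: game position comparison FS}~(\ref{item: lem: game position comparison FS:1}). The Left options of $G(n)=\fs(n;a,b)$ are exactly $G(n-k)$ for the admissible $k\in[a]$ with $k\le n$. In other words, they form the set $\{G(m) : \max(0,n-a)\le m\le n-1\}$. "Confused" here means that any two distinct Left options are incomparable, i.e.\ $G(m_1)\cgfuzzy G(m_2)$ whenever $m_1\neq m_2$ are both in this range. That is precisely what is needed to say that no Left option dominates another.

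The key step is a one-line range check. Take two distinct Left options $G(m_1)$ and $G(m_2)$, and assume without loss of generality that $m_1>m_2$. Both indices lie in the interval $[n-a,\,n-1]$, so $0<m_1-m_2\le (n-1)-(n-a)=a-1\le a$. Lemma~\ref{lem: game position comparison FS}~(\ref{item: lem: game position comparison FS:1}) then applies with $n:=m_1$ and $m:=m_2$, giving $G(m_1)\cgfuzzy G(m_2)$. If one prefers a self-contained argument, the same conclusion follows directly. In $G(m_1)-G(m_2)$ the first player simply removes $m_1-m_2\le a\le b$ tokens from the larger heap, which is a legal move for either player. This leaves $G(m_2)-G(m_2)=0$, so the first player wins.

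I expect no real obstacle. The only points needing care are bookkeeping. First, when $n\le a$ the option range is $[0,n-1]$ rather than $[n-a,n-1]$, but the difference bound $\le n-1<a$ still holds. Second, the hypothesis $a<b$ is only used to place us in the setting of Lemma~\ref{lem: game position comparison FS}; in fact the fuzziness holds whenever the difference is at most $\min(a,b)$. Finally, I will remark that, since $\cgfuzzy$ excludes both $\ge$ and $\le$, no Left option can be removed as dominated when computing the canonical form. This is the fact later used in Theorem~\ref{thm: CF of FS}.
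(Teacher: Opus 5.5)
Your proposal is correct and follows the paper's proof: both note that any two Left options differ in heap size by at most $a-1$ and invoke Lemma~\ref{lem: game position comparison FS}(\ref{item: lem: game position comparison FS:1}) to conclude that they are pairwise confused, so none dominates another. Your self-contained argument, in which the first player equalizes the heaps, is exactly the paper's proof of that lemma item.
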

\begin{proof}
    The possible Left options of $G$ are 
    $$\fs(n-1;a,b),\fs(n-2;a.b),\dots,\fs(n-a;a,b).$$
    Any two of these options differ in heap size by at most $a-1$. Hence, by Lemma~\ref{lem: game position comparison FS}(\ref{item: lem: game position comparison FS:1}), all these options are confused. Thus, none of them dominates another.
\end{proof}

The next Lemma uses Lemma~\ref{lem: game position comparison FS} and reversibility to simplify the game form of {\sc FS}.

\begin{lemma}[Game Equivalence]\label{lem: game equivalence}
Let $G=\fs(n;a,b)$ be an instance of {\sc FS}, with $a<b$. If $n>a$, then
\begin{equation*}
    G=\Set{\fs(n-1;a,b),\; \fs(n-2;a,b),\; \dots,\; \fs(n-a;a,b)\mid 0}.
\end{equation*}
\end{lemma}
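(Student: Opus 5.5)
We prove the equality through the Right options only, since nothing needs to change on Left's side. By definition the Left options of $G$ are $\fs(n-1;a,b),\dots,\fs(n-a;a,b)$, and the Right options are $\fs(n-j;a,b)$ for $1\le j\le \min(n,b)$. The plan is to show two things. First, every Right option other than those that reverse to $0$ is dominated. Second, the surviving Right option(s) are equal to $0$ (or reverse through to $0$). Then the Right option set collapses to $\{0\}$. We write $G(m):=\fs(m;a,b)$ throughout, as in Lemma~\ref{lem: game position comparison FS}.

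\textbf{Case $n\le b$.} Here Right has the option $G(0)=0$. For any other Right option $G(m)$ with $0<m<n$, there are two sub-cases.
\begin{itemize}
\item If $m>a$, then $m-0>a$, so Lemma~\ref{lem: game position comparison FS}(2) gives $G(m)<G(0)=0$. Hence $0$ is dominated by $G(m)$ from Right's point of view. This is backwards, so domination cannot be used here, and I switch to reversibility instead.
\item If $m\le a$, then $G(m)=*m$ by Observation~\ref{obs: fs positions for small n}. Each $*m$ with $m\ge1$ has the Left option $0$, and $0\ge G$ when Right wins $G$. By Proposition~\ref{prop: stronger wins}, $G\in\rp$ since $n>a$, so $G\le 0$. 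Thus the Right option $*m$ reverses through its Left option $0$, and it is replaced by the Right options of $0$, namely none. So these options simply disappear.
\end{itemize}

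\textbf{Case $n>b$.} Here $0$ is not a direct option, so I use the same reversal idea for every Right option $G(m)$, with $n-b\le m\le n-1$. Each such $G(m)$ has a Left option $G'$ with $G'\le G$ after which Right moves straight to $0$.

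More precisely, consider $G(m)$ with $m>a$. I would first canonicalize it by induction. The induction hypothesis is Theorem~\ref{thm: CF of FS} for smaller heaps, or this lemma itself, giving $G(m)=\{G(m-1),\dots,G(m-a)\mid 0\}$. Its Left options are $G(m-i)$ for $1\le i\le a$, so I need some $i$ with $G(m-i)\le G$. Since $n-(m-i)\ge 2$, Lemma~\ref{lem: game position comparison FS}(2) needs $n-(m-i)>a$. This holds for $i=a$ exactly when $n-m\ge1$, which is always true. So $G(m-a)<G$, and the Right option $G(m)$ reverses through $G(m-a)$ to the Right options of $G(m-a)$.

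That is a problem: the Right options of $G(m-a)$ are not obviously just $0$. This is the main obstacle, and it suggests a cleaner route.

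\textbf{Cleaner route.} I would instead pass directly to showing $G=H:=\{G(n-1),\dots,G(n-a)\mid 0\}$ by checking that $G-H$ is a second-player win. Every move can be answered by a mirror move, except for the two asymmetric cases below.
\begin{itemize}
\item Right moves $G\to G(m)$. Left answers in $-H$ by moving $-H\to 0$ (this is Left's move, the negation of Right's option $0$ in $H$), leaving $G(m)$. Then I need $G(m)\ge 0$ or a Left win moving second. This fails when $m>a$, since then $G(m)\in\rp$. Instead, Left answers $G(m)-H$ by moving $-H$ to $-G(n-i)$ with $n-i<m$ chosen so that $0<m-(n-i)\le a$. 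Then Left equalizes next turn, or she wins by Lemma~\ref{lem: game position comparison FS}.
\item Left moves $-H\to 0$. This leaves $G$, and $G\in\rp$ by Proposition~\ref{prop: stronger wins}, so Right, moving next, wins.
\end{itemize}

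The key step is verifying that the resulting positions $G(m)-G(n-i)$ are wins for Left as the player moving second. This requires $m\le n-i$, which contradicts the choice above, so the details must be arranged so that the differences land in the fuzzy regime of Lemma~\ref{lem: game position comparison FS}(1). Making this step work is where I expect the real work to lie.
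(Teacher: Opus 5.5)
\textbf{The proposal has a genuine gap.} Neither of your two routes reaches a proof, and the step you propose in the \emph{cleaner route} is not a legal move.

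\textbf{The illegal reply in the cleaner route.} In $G-H$ the only nontrivial case is Right moving $G\to G(m)$. You answer by having Left move $-H$ to $-G(n-i)$. But $-H=\{0\mid -G(n-1),\dots,-G(n-a)\}$, so Left's only move in $-H$ is to $0$; the games $-G(n-i)$ are Right's options there. Your other suggested reply, $-H\to 0$, leaves $G(m)$ with Right to move, and Right wins that for every $m\ge 1$.

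\textbf{The missing idea: Left must answer inside the component $G(m)$.} This is how the paper's proof works:
\begin{itemize}
\item If $m=0$, Left plays $-H\to 0$.
\item If $1\le m\le a$, then $G(m)=*m$ and Left moves it to $0$. Right must then move $-H$ to $-G(n-k)$ with $n-k\ge 1$. Left wins this moving first, by Proposition~\ref{prop: stronger wins} and Observation~\ref{obs: fs positions for small n}.
\item If $m>a$, Left removes $a$ tokens, reaching $G(m-a)-H$. Right now has two kinds of reply.
\begin{itemize}
\item If Right moves in $-H$, the position is $G(m-a)-G(n-k)$ with $(n-k)-(m-a)>0$. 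By Lemma~\ref{lem: game position comparison FS} this is positive or fuzzy, so Left, moving first, wins.
\item If Right moves in $G(m-a)$ with $m-a>a$, replace $G(m-a)$ by the equal form $\{\dots\mid 0\}$ given by induction. Right's only move there is to $0$, leaving $-H$, which Left moves to $0$.
\item If Right moves in $G(m-a)$ with $m-a\le a$, the position is $*k-H$ with $k<m-a$. Left wins by moving $*k$ to $0$, or $-H$ to $0$ if $k=0$.
\end{itemize}
\end{itemize}
Without this reply your argument says nothing about the one case that matters. Your mirror cases and the case ``Left plays $-H\to 0$'' are fine.

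\textbf{The reversal route you abandoned does work.} The inequality directions in your text are wrong. A Right option $G^R$ reverses through $G^{RL}$ when $G^{RL}\ge G$, not $\le$. For $m>a$, Lemma~\ref{lem: game position comparison FS}(2) applied to heaps $n$ and $m-a$ gives $G(n)<G(m-a)$. You wrote $G(m-a)<G$, but the correct inequality is exactly the reversal condition.

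The Right options of $G(m-a)$ are then no obstacle:
\begin{itemize}
\item If $m-a>a$, induction lets you take them to be just $\{0\}$.
\item If $1\le m-a\le a$, they are $0,*,\dots,*(m-a-1)$. The nonzero nimbers reverse out through $0\ge G$, exactly as in your case $m\le a$.
\end{itemize}
So every Right option either becomes $0$ or disappears. At least one $0$ survives: directly if $n\le b$, and from $G(n-1)$, where $n-1>a$, if $n>b$. The same reversal covers your case $n\le b$, $m>a$, so the fact that $G(m)<0$ dominates the option $0$ causes no trouble.

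Completed this way, the canonical-form route is a correct and somewhat shorter alternative to the paper's strategy argument for $G-H\in\pp$. As submitted, however, it is not carried through.
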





\begin{proof}
    We prove the statement by inducting on $n$. Let \[ H \coloneq\Set{\fs(n-1;a,b),\; \fs(n-2;a,b),\; \dots,\; \fs(n-a;a,b) \mid 0}.\] To prove that $G=H$, it suffices to  demonstrate that \( G - H \in \mathscr{P}\).

    If $n=a+1$, then
    \begin{align*}
        G &= \Set{\fs(1;a,b),\;\fs(2;a,b),\dots,\fs(a;a,b) \mid  \fs(0;a,b),\;\fs(1;a,b),\dots,\fs(a;a,b)}\\ 
        &=\Set{*,*2,\dots,*a\mid 0,*,\dots,*a} \tag{Obs~\ref{obs: fs positions for small n}}\\
        &=\Set{*,*2,\dots,*a\mid 0} \tag{by reversibility}\\
        &=\Set{\fs(1;a,b), \;\fs(2;a,b), \dots, \fs(a;a,b)\mid0}.
    \end{align*}
    As $G<0$ by Proposition~\ref{prop: stronger wins}

    Assume by induction that the statement holds for all options of $G$ with heap size greater than $a$.
    
    Now, if Left plays in $G$, then Right can mimic in $-H$ and win and similarly, if Right starts in $-H$ then Left can mimic in $G$ and win as the Left options are the same in both games. If Left starts and plays in $-H$, the game leads to $G+0$ and Right wins by Proposition~\ref{prop: stronger wins}.
    
    The only remaining case is when Right starts and plays in $G$. Let $G_j\coloneq\fs(n-j;a,b)$ where $j\in[b]$. Right's move in $G$ leads the game to $G_j-H$ for some $j\in[b]$. We divide this case into further cases depending on the size of $n-j$:

    \begin{enumerate}
        \item \bm{$n-j\leq a$}: In this case Left moves from $G_j-H$ to $0-H$ as $G_j=*(n-j)$ by Observation~\ref{obs: fs positions for small n}. From this position, Right's move leads to $\fs(n-k;b,a)$ for some $k\in[a]$. Since $n>a$, $n-k>0$, and hence, Left wins by Proposition~\ref{prop: stronger wins} and Observation~\ref{obs: fs positions for small n}.

        \item \bm{$n-j>a$}: In this case Left removes $a$ from $G_j$ leading the game to $\fs(n-j-a;a,b)-H$. From here Right can move on either component, which leads to two cases:
        \begin{itemize}
            \item If Right makes a move on $-H$ and removes $k\le a$, Left plays first in $\fs(n-j-a;a,b)+\fs(n-k;b,a)$. Since $k\in[a]$, $n-j-a<n-k$, and Left wins by Lemma~\ref{lem: game position comparison FS}.

            \item If Right moves on $\fs(n-j-a;a,b)$ and $n-j-a>a$, then by induction, the game reduces to $-H$ and Left wins by moving from $-H$ to 0. If $n-j-a\le a $, then by Observation~\ref{obs: fs positions for small n}, Right's move leads the game to $*m-H$ where $0\le m<n-j-a$. If $m=0$, Left wins by Proposition~\ref{prop: stronger wins}, and otherwise, Left wins by moving from $*m-H$ to $0-H$ as Right cannot end $-H$ immediately and by induction, Left can end the game in her next turn. 
        \end{itemize}
    \end{enumerate}This shows that $G-H$ is a $\mathscr{P}$-position, which completes the proof. 
\end{proof}

\begin{observation}\label{cor: increasing wealth ineffective}
    Consider $n,a,b\in \Nat$ and let $b>a$. Then, $\fs(n;a,b)=\fs(n;a,b+1)$.  
\end{observation}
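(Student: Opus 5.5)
We prove $\fs(n;a,b)=\fs(n;a,b+1)$ by strong induction on $n$, using Observation~\ref{obs: fs positions for small n} for small heaps and Lemma~\ref{lem: game equivalence} for large ones. The point is that both right-hand descriptions depend on $b$ only through the hypothesis $a<b$, and this hypothesis also holds with $b+1$ in place of $b$.

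\textbf{Small heaps.} Suppose $n\le a$. Since $a<b<b+1$, we have $n\le\min(a,b)$ and $n\le\min(a,b+1)$. By Observation~\ref{obs: fs positions for small n}, both games equal $*n$, so they are equal.

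\textbf{Large heaps.} Suppose $n>a$. Lemma~\ref{lem: game equivalence} applies to both pairs $(a,b)$ and $(a,b+1)$, because $a<b$ and $a<b+1$. It gives
\[
\fs(n;a,b)=\Set{\fs(n-1;a,b),\dots,\fs(n-a;a,b)\mid 0}
\]
and
\[
\fs(n;a,b+1)=\Set{\fs(n-1;a,b+1),\dots,\fs(n-a;a,b+1)\mid 0}.
\]
Every heap size $n-k$ with $k\in[a]$ lies in $[0,n)$, so the induction hypothesis gives $\fs(n-k;a,b)=\fs(n-k;a,b+1)$. Replacing options of a game form by equal games yields an equal game; this is the standard substitution property of game equality. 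Hence the two forms above are equal, and the induction is complete.

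There is no real obstacle here. The only point needing care is to confirm that Lemma~\ref{lem: game equivalence} is stated for all $b>a$, and not just for the original $b$, so that it may legitimately be applied with $b+1$. Iterating the result shows more generally that $\fs(n;a,b)=\fs(n;a,b')$ for all $b'>b>a$, which agrees with the canonical form in Theorem~\ref{thm: CF of FS} being independent of $b$.
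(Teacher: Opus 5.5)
Your proof is correct and is essentially the paper's argument: the paper likewise deduces the equality from Lemma~\ref{lem: game equivalence} by induction, since the Left options coincide by the induction hypothesis and the only Right option is $0$ in both games. Your separate treatment of $n\le a$ via Observation~\ref{obs: fs positions for small n} simply makes explicit the base case that the paper leaves implicit.
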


This is immediate by Lemma~\ref{lem: game equivalence}. Namely, by induction, the Left options are the same, and the Right option is zero in both games. 

Now we prove the first main result, Theorem~\ref{thm: CF of FS}. This result establishes that the game form presented in Lemma~\ref{lem: game equivalence} cannot be simplified further; it already is the canonical form of the game.


\begin{proof}[Proof of Theorem~\ref{thm: CF of FS}]
Let $G=\Set{\fs(n-1;a,b), \fs(n-2;a,b),\dots,\fs(n-a;a,b)\mid 0}$. By Lemma~\ref{lem: game equivalence} and Lemma~\ref{cor: no domination}, it suffices to prove that none of the Left options reverses out. That is, we prove that, for all $G^L$, for all $G^{LR}$, $G\ngeq G^{LR}$. This is immediate by Lemma~\ref{lem: game position comparison FS}, since $G^{LR}$ is of the same form, but with a smaller heap size.   
\end{proof}

So far, we have analyzed {\sc FS} positions with different subtraction sets. The following theorem provides the canonical form of {\sc FS} when both players have the same subtraction set.

We have seen that the Right (the stronger player) always has an advantage for sufficiently large heap sizes. The next section quantifies this advantage using atomic weight theory.


\section{Atomic weights of Full Support}\label{sec: AW of FS}


Intuitively, the atomic weight of an all-small game can be interpreted as the number of moves a player can delay playing before the opponent can win the game. Now, consider an instance of {\sc FS}, $\fs(n;a,b)$ with $n,a,b>0$ and $a\le b$. If $a=b$, then by Theorem~\ref{thm: CF of FS for equal wealth}, $\fs(n;a,b)$ is a nimber and hence, the atomic weight of $\fs(n;a,b)$ is 0 by Lemma~\ref{lem: aw of nimbers}. If $n\leq a$, the position $\fs(n;a,b)$ is again a nimber, and hence the atomic weight $0$. Now consider the case $n>a$ and $a\ne b$. In this case, Left cannot delay playing as Right has an immediate move to $0$ from $\fs(n;a,b)$ by Theorem~\ref{thm: CF of FS}. In contrast, Right can delay playing as long as the heap size remains strictly greater than $a$, as Left cannot end the game. Left can remove at most $a$ tokens per move, and thus the minimum number of moves Left requires to reduce the heap size below $a+1$ is \begin{align}
    \lceil (n-a)/a\rceil=\lceil n/a\rceil-1.\label{eq: FS Left required moves}
\end{align}
Thus, Right can delay playing for $\lceil n/a\rceil -1$ moves. Let $$f(n,a)\coloneq\lceil n/a\rceil-1.$$ 
So the atomic weight of $\fs(n;a,b)$ is expected to be $-f(n,a)$ for $a<b$ and $n>a$. We now prove this claim using Definition~\ref{def: atomic weight} and Theorem~\ref{thm: check farstareq}. The theorem requires the outcome of $\fs(n;a,b)+\cgfarstar$ and the next lemma facilitates this computation.

\begin{lemma}\label{lem: outcome of G+farstar}
Let $n,a,b\in\Nat$ with $a<b$ and $n>a$. Then $\fs(n;a,b)+\cgfarstar$ is an $\rp$-position.
\end{lemma}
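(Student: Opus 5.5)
We show directly that Right wins $G+\cgfarstar$, where $G=\fs(n;a,b)$ with $n>a$, whoever starts. We use the standard description of the far star as a game in which both players may always move to any nimber of arbitrarily large size $*m$. Equivalently, $\cgfarstar$ is a nimber $*m$ with $m$ larger than every nim-value that can occur below. We argue with the canonical form from Lemma~\ref{lem: game equivalence}: for $n>a$, $G=\{\fs(n-1;a,b),\dots,\fs(n-a;a,b)\mid 0\}$, while $\fs(k;a,b)=*k$ for $k\le a$. The proof is by induction on $n$, and the same induction also handles a general nimber $*m$ in place of $\cgfarstar$, for $m$ large.

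\textbf{Right starts.} Right moves $G\to 0$, leaving $\cgfarstar$. This is not zero, so on its own it is a first-player win and would be lost by Right here. We therefore choose a different first move. Right moves $\cgfarstar\to 0$, leaving $G$ with Left to move. By Proposition~\ref{prop: stronger wins}, $G\in\rp$ since $n>a$, so Right wins. This case is easy.

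\textbf{Left starts.} There are two kinds of Left move.
\begin{itemize}
\item If Left moves in $\cgfarstar$ to some $*m$, Right answers $G\to 0$ when $m=0$. When $m\neq 0$, Right answers $G\to 0$, leaving $*m$ with Left to move; this is a first-player win, so that reply fails. Instead Right answers $*m\to 0$, leaving $G$ with Left to move, which is in $\rp$ by Proposition~\ref{prop: stronger wins}. If $m=0$, the position is already $G$ with Right to move; Right plays $G\to 0$ and wins.
\item If Left moves in $G$ to $G'=\fs(n-j;a,b)$ with $j\in[a]$, consider first $n-j>a$. Then $G'+\cgfarstar$ is, by induction, in $\rp$ with Right to move, and Right wins as a next player as well, by the argument of the case ``Right starts''. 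The critical situation is $n-j\le a$: then $G'=*(n-j)$, and the position is $*(n-j)+\cgfarstar$ with Right to move. Right moves $\cgfarstar\to *(n-j)$, producing $0$, and Left, now to move, loses.
\end{itemize}

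Combining the cases, Right wins $G+\cgfarstar$ whether Left or Right starts, so $G+\cgfarstar\in\rp$.

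The main obstacle is the rigorous handling of $\cgfarstar$. The argument assumes that Right can always reach any needed nimber from it, and the exact definition in the appendix must be matched, for instance via the remote-star convention that $\cgfarstar$ is a sufficiently large nimber $*N$ with $N$ exceeding all nim-values in the play. If that convention is used, every step above is a finite nim argument, and the induction on $n$ is only needed for the nested positions with $n-j>a$, since $\fs(k;a,b)\in\rp$ for all $k>a$ already gives Right's reply.
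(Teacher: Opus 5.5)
Your proof is correct and is essentially the paper's argument. When Right starts he plays $\cgfarstar\to 0$. When Left reduces the heap to $n-j\le a$, Right moves $\cgfarstar$ to $*(n-j)$. When Left moves $\cgfarstar\to *m$ with $m>0$, Right plays $*m\to 0$. Each case falls back on Proposition~\ref{prop: stronger wins}.

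The only difference is Left's move to $\fs(n-j;a,b)$ with $n-j>a$. There you invoke induction, whereas the paper has Right play $\cgfarstar\to 0$ and apply Proposition~\ref{prop: stronger wins} directly. That way no induction, and no care about the size of $\cgfarstar$ across induction steps, is needed, as you yourself remark at the end.
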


\begin{proof}
Let $G=\fs(n;a,b)$. By Theorem~\ref{thm: CF of FS}, the canonical form of $G$ is
\[
G=\Set{\fs(n-1;a,b), \dots, \fs(n-a;a,b)\mid 0}.
\]
We show that Right wins $G+\cgfarstar$ regardless of who starts.

If Right starts, he moves in $\cgfarstar$ to $0$, reducing the position to $G$. Since $n>a$, Right wins $G$ by Proposition~\ref{prop: stronger wins}.

Now suppose Left starts. She may either move in $G$ or in $\cgfarstar$.
\begin{itemize}
\item If Left moves in $G$ to $\fs(n-k;a,b)$ where $1\le k\le a$, then:
If $n-k\le a$, Right moves in $\cgfarstar$ to $*(n-k)$. Since $\fs(n-k;a,b)=*(n-k)$ in this case, the position becomes $*(n-k)+*(n-k)=0$, and Right wins. If $n-k>a$, Right moves in $\cgfarstar$ to $0$, reducing the position to $\fs(n-k;a,b)$, which Right wins by Proposition~\ref{prop: stronger wins}.

\item If Left moves in $\cgfarstar$ to $*m$ where $m\ge 0$, then:
If $m>0$, Right moves from $*m$ to $0$, reducing the position to $G$, which he wins by Proposition~\ref{prop: stronger wins}. If $m=0$, then $G+*0=G$, and Right wins $G$ by Proposition~\ref{prop: stronger wins}.
\end{itemize}
This completes the proof.
\end{proof}

Now we restate and prove the second main theorem of this section.

\textbf{Theorem~\ref{thm: aw of FS}.} \textit{Consider $G=\fs(n;a,b)$, where $a,b$ and $n$ are positive integers. Then the following hold:
    \begin{enumerate}
        \item If $a=b$, then $\aw(G)=0$ for all $n$;
        \item If $a<b$, then $\aw(G)=-f(n,a)$ for all $n$.
    \end{enumerate}}
\begin{proof}
    Item~(1): $G$ is a nimber by Theorem~\ref{thm: CF of FS for equal wealth}. Hence, by Lemma~\ref{lem: aw of nimbers}, the atomic weight of $G$ is $0$.
    
    Item~(2): Observe that $f(n,a)=f(n-a,a)+1$. By Theorem~\ref{thm: check farstareq}, it suffices to prove that $\cgdown\;\cgfarstar<G(n)+f(n,a)\cdot\cgup<\cgup\cdot\cgfarstar$ which is equivalent to:
    \begin{enumerate}
        \item[(i)] Left wins $G(n)+(f(n,a)+1)\cdot\cgup\;\cgfarstar$; and
        \item[(ii)] Right wins $G(n)+(f(n,a)-1)\cdot\cgup\;\cgfarstar$.
    \end{enumerate}
    For (i), if $n\le a$, the game simplifies to $*n+\cgup\;\cgfarstar>0$, and hence Left wins. So Suppose $n>a$. If Left start, she removes $a$ pebbles from $G(n)$, and thus plays to $H\coloneq G(n-a)+(f(n,a)+1)\cdot\cgup\;\cgfarstar$. By making this move, she reduces the delay advantage of Right on $G$-component by $1$. By induction, the atomic weight of this game is $\aw(H)= -f(n-a,a)+f(n,a)+1=2$, and therefore Left wins by the two-ahead-rule~\ref{thm: outcome using aw}. If Right starts and plays in $G(n)$, the game reduces to $(f(n,a)+1)\cdot\cgup\;\cgfarstar$ as Right has only one option in $G(n)$, namely to 0, by theorem~\ref{thm: CF of FS}. Since $\aw((f(n,a)+1)\cdot\cgup\;\cgfarstar)\ge 2$, Left wins by two-ahead-rule. So suppose he instead plays in the other component, then the game reduces to $G(n)+f(n,a)\cdot\cgup\;\cgfarstar$ by Lemma~\ref{lem: n-cgup-cgfarstar}. This move reduces the atomic weight of the game by $1$. Now Left responds by removing $a$ pebbles from $G$-component, leading the game to $G(n-a)+f(n,a)\cdot\cgup\;\cgfarstar$, which Left wins by induction as $f(n,a)=f(n-a,a)+1$. 

    For item~(ii), if $n\le a$, the game simplifies to $*n+\cgdown\;\cgfarstar$, which is $<0$ and hence Right wins. Furthermore, if $a<n\le 2a$, then the game is $G(n)+\cgfarstar$, and by lemma~\ref{lem: outcome of G+farstar}, Right wins. So suppose $n>2a$. If Right starts, he moves on the second component and reduces the game to $G(n)+(f(n,a)-2)\cdot\cgup\;\cgfarstar$. By doing so, he decrease the atomic weight by $1$. Left has only one option in the second component, namely to 0. If she plays on that component, the game reduces to $G(n)$, and hence Right wins by Lemma~\ref{prop: stronger wins}. So suppose she removes $x$ tokens from $G(n)$, and thus plays to $J\coloneq G(n-x)+(f(n,a)-2)\cdot\cgup\;\cgfarstar$. If $f(n-x,a)=f(n,a)$, then Right wins by two-ahead-rule. Otherwise, Right reduces the atomic weight by 1 by moving on the second component and wins by two-ahead-rule.
\end{proof}


We have shown that if $a< b$ in {\sc FS}, Right always has an advantage for all sufficiently large heaps. The next section concerns a modified ruleset, which gives a way to strengthen Left.


\section{Outcomes and canonical forms of Truncated Support}\label{sec: TS}

{\sc Truncated Support} is a modification of {\sc Full Support} where we delete small moves from the subtraction set of the stronger player in {\sc FS}. We assume $a<b$ without loss of generality.

\begin{definition}[Truncated Support]
     {\sc Truncated Support} ({\sc TS}) is a partizan subtraction ruleset played on a single heap, where the subtraction sets for Left and Right are determined by the positive integers $a$, $b$ and $\tau$, where the truncation level $1\le \tau<b$ with $S_L=[a]$ and $S_R=\Set{\tau+1,\;\tau+2,\dots,\; b}$. 
    For a heap size $n$, the game is denoted by $\ts_\tau(n; a, b)$. 
\end{definition}

Symmetrically, in $\ts_\tau(n;b,a)$, if $b>a$, the subtraction sets are $S_L=\{\tau+1,\dots,b\}$ and $S_R=[a]$.
We omit the truncation level subscript $\tau$ whenever it is fixed or clear from the context. 

We start by analyzing {\sc TS} positions for small heap sizes, for all truncation levels.
\begin{observation}\label{obs: TS=n if n<=tau}
    Let $a,b$ and $\tau$ be positive integers such that $a<b$ and $\tau<b$. 
If $n\le \tau$, then Right does not have any move, whereas Left can remove tokens one by one. Thus, for all $1\le n\le\tau$,  $\ts_\tau(n;a,b)=n$. 
\end{observation}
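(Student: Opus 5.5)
The statement immediately above is Observation~\ref{obs: TS=n if n<=tau}: for $1\le n\le \tau$ we have $\ts_\tau(n;a,b)=n$. I would prove it by induction on $n$, working directly from the game form and using only the definition of integer games.

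\textbf{Right's side.} Right's subtraction set is $\{\tau+1,\dots,b\}$. Every element exceeds $\tau\ge n$, so Right has no legal move from a heap of size $n$. Hence the Right option set is empty.

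\textbf{Left's side.} Left has $S_L=[a]$ with $a\ge 1$. From heap $n$ she can move to $\ts_\tau(n-k;a,b)$ for each $1\le k\le \min(a,n)$. This gives the form
\[
\ts_\tau(n;a,b)=\Set{\ts_\tau(n-1;a,b),\dots,\ts_\tau(n-\min(a,n);a,b)\mid \varnothing}.
\]
All these options have heap size below $n\le\tau$, so by induction they equal the integers $n-1,\dots,n-\min(a,n)$. The base case is $\ts_\tau(0;a,b)=0$, since neither player can move from an empty heap.

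\textbf{Simplification.} The position is therefore $\{n-1,n-2,\dots\mid\ \}$. Among integers, Left's largest option dominates the others, so the form reduces to $\{n-1\mid\ \}$. By definition this is the integer $n$.

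There is no real obstacle here; the only point needing care is that the domination step is valid because all Left options are numbers.
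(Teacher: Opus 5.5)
Your proof is correct and takes the same approach as the paper. Right has no legal move because every element of $\{\tau+1,\dots,b\}$ exceeds $n$, and by induction Left's options are the integers $n-1,\dots,n-\min(a,n)$, of which $n-1$ dominates, giving $\{n-1\mid\,\}=n$. The paper states this only as an observation, invoking domination in the proof of Lemma~\ref{lem: outcome TS heap<=b}; your induction makes that argument explicit.
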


We now find the outcomes of {\sc TS} for heap sizes no larger than $b$, for all truncation levels.
\begin{lemma}\label{lem: outcome TS heap<=b}
    Consider $G=\ts_\tau(n;a,b)$ where $a<b$ and $1\le \tau<b$. Then, 
    \begin{enumerate}
        \item\label{item:lem: outcome TS heap<=b:1} $o(G)=\lp$ if $n\le \tau$;
        \item\label{item:lem: outcome TS heap<=b:2} $o(G)=\np$ if $\tau+1\le n\le \min(\tau+a,b)$;
        \item\label{item:lem: outcome TS heap<=b:3} $o(G)=\rp$ if $\min(\tau+a,b)+1\le n\le b$.
    \end{enumerate}
\end{lemma}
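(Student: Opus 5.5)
The plan is to determine, for each heap size $n\le b$, who wins when Left starts and who wins when Right starts. Throughout we use the fact that for $n\le b$ every Right move is to a heap of size $n-j$ with $j\ge\tau+1$. We also use two facts about Left: she can remove at most $a$ tokens, and the heap $0$ is a loss for whoever must move. The three items are handled in turn.

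\emph{Item (1), $n\le\tau$.} This is immediate from Observation~\ref{obs: TS=n if n<=tau}: $G=n>0$ is a positive number (for $n=0$ one reads the item as vacuous or handles it separately). Directly: Right has no move at all, while Left always has a move (remove one token). So whoever starts, Right is eventually the one unable to move.

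\emph{Item (2), $\tau+1\le n\le\min(\tau+a,b)$.} Right starting removes all $n$ tokens; this is legal since $\tau+1\le n\le b$. So Right wins moving first. For Left starting, she removes $n-\tau$ tokens, which is legal because $1\le n-\tau\le a$. This leaves a heap of size $\tau$, which is an $\lp$-position by item (1), so Left wins moving first. Hence $G\in\np$.

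\emph{Item (3), $\min(\tau+a,b)+1\le n\le b$.} This range is nonempty only when $\tau+a<b$, so here $n\ge\tau+a+1$.
\begin{itemize}
\item Right starting takes the whole heap, which is legal since $\tau+1\le n\le b$.
\item Left starting removes some $k\in[a]$, leaving $n-k$ with $\tau+1\le n-k\le b-1$. Right then takes everything and wins.
\end{itemize}
Hence $G\in\rp$.

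No step is a genuine obstacle; the only care needed is checking the legality bounds. In particular, the Left move of size $n-\tau$ in item (2) needs $n-\tau\le a$, and the Right move of size $n-k$ in item (3) needs $n-k\ge\tau+1$.
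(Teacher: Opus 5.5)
Your proof is correct and follows the paper's argument essentially step for step. It relies on the same legality checks: $1\le n-\tau\le a$ for Left's move to heap $\tau$ in item (2), and $\tau+1\le n-k\le b$ for Right's clearing reply in item (3). Your remark that item (1) must exclude $n=0$ (where $G=0\in\pp$) is also a fair correction, consistent with Observation~\ref{obs: TS=n if n<=tau} being stated only for $1\le n\le\tau$.
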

\begin{proof}
    If $n\le \tau$, Right does not have any legal move on $\ts_\tau(n;a,b)$ and Left can remove tokens one by one. Thus, by domination, $\ts_\tau(n;a,b)=n$ and Left wins $G$. 
    
    Suppose $\tau+1\le n\le \min(\tau+a,b)$. If Left starts on $G$, she can reduce the heap size to $\tau$ as $1\le n-\tau\le a$ and win by item~(1). If Right starts, he wins by removing the full heap as $\tau+1\le n\le b$.

    Now Suppose $\min(\tau+a,b)+1\le n\le b$. If $b\le \tau+a$, there is nothing to prove. Otherwise, if Right starts on $G$, he wins by removing the full heap. If Left starts on $G$, she can reduce the heap size at most to $\tau+1$ and then, Right removes the full heap in the next turn and wins.
\end{proof}

For fixed $a$ and $b$, as we increase $\tau$, Right loses access to increasing number of small moves and hence, intuitively loses his power. Figure~\ref{fig: outcomes of ts(n,3,7)} indicates the same. As discussed in the Section~\ref{sec: introduction FSTS}, the {\sc TS} ruleset seems to be Right-dominating for truncation level below $b-a$, balanced at truncation level $b-a$, and Left-dominating for truncation level above $b-a$. In the next three subsections, we work on these 3 cases, starting with Higher truncation level. 

\subsection{Higher truncation levels}\label{subsec: higher truncation level}
Throughout this subsection, we assume $\tau>b-a$. The outcomes of $\ts_\tau(n;a,b)$ for heap sizes $n\le b$ are already determined by Lemma~\ref{lem: outcome TS heap<=b}.
We now prove item~(1) of Theorem~\ref{thm: outcomes of TS} which states that the outcome of $\ts_\tau(n;a.b)$ is $\lp$ for all $n\ge b+1$ if $\tau>b-a$. 

\begin{proof}[Proof of Theorem~\ref{thm: outcomes of TS}(\ref{item:thm: outcomes of TS:1})]
    First, we consider the case $n=b+1$. If Left starts, she removes $b+1-\tau$ tokens, reaching $\ts_\tau(\tau;a,b)=\tau$, and hence wins; 
    this move is legal as $1<b+1-\tau\le a$. If Right starts, his move leads to $\ts_\tau(j;a,b)$ where $1\le j\le b+1-(\tau+1)<a$. Left then wins by removing the remaining heap. 
    
    Now suppose $n>b+1$. If Left starts, she removes one token and win as the resulting position is $\lp$ by the induction hypothesis because $n-1\ge b+1$. If Right starts, his move leads to a non-empty heap as $n>b+1$. By the induction hypothesis together with Lemma~\ref{lem: outcome TS heap<=b}, every resulting position is either $\lp$ or $\np$. In either case, Left wins as the next player.
\end{proof}

Thus {\sc TS} is Left-dominating for all truncation levels above $b-a$. Next we analyze the truncation level $b-a$.

\subsection{Middle truncation level}\label{subsec: balanced truncation level}
Throughout this subsection, we fix the truncation level $\kappa:=b-a$. 
Recall that a ruleset is balanced if it is neither Left- nor Right-dominating. 
We now prove item~(2) of Theorem~\ref{thm: outcomes of TS}, which states that the outcomes of {\sc TS} at the truncation level $\kappa$ are periodic with period $b+1$. As a consequence, the {\sc TS} ruleset is balanced at this truncation level. 

\begin{proof}[Proof of Theorem~\ref{thm: outcomes of TS}(\ref{item:thm: outcomes of TS:2})]
    Given $t=n\bmod (b+1)$, we must show that $G\in \pp$ if $t=0$, $G\in \lp$ if $1\le t\le \kappa$ and $G\in \np$ if $\kappa+1\le t\le b$.

    If $n\le b$, then $t=n$ and the statement is the same as Lemma~\ref{lem: outcome TS heap<=b}. Hence assume $n>b$. 
    
    
    %
    
    Consider the case when $t=0$. We must prove $G\in \pp$. 
    
    Suppose Left starts on $G$ and her move leads to heap size $n_\ell$. Then $n_\ell\ge n-a$. Since $t=0$ and $b-a={\kappa}$, $$ n_\ell \bmod (b+1) \ge {\kappa}+1,$$ and $\ts_{\kappa}(n_\ell;a,b)\in \np$ by induction. Thus, Right wins. 
    
    Now suppose Right starts and moves to heap size $n_r$. Then $ n_r\ge n-b$ and thus, $n_r \bmod (b+1)\ge 1$, and the outcome of $\ts_{\kappa}(n_r;a,b)$ is either $\lp$ or $\np$ by induction. Therefore, Left wins. 

    Consider the case when $1\le t\le {\kappa}$. We must prove $G\in\lp$. 
    
    Suppose Right starts and moves to heap size $n_r$. Then $n-b\le n_r\le n-({\kappa}+1)$. Since $1\le t\le {\kappa}$, $$2\le n_r \bmod (b+1)\le b,$$ and $\ts_{\kappa}(n_r;a,b)\in \lp\cup\np$, by induction. Thus, Left wins. 
    
    Suppose Left starts. By using induction, she wins by moving to heap size $n-1$, as $0\le (n-1) \bmod (b+1)\le {\kappa}-1$. 

    Consider the case when ${\kappa}+1\le t\le b$. We must prove $G\in \np$.
    
    If Right starts, by induction he wins by moving to heap size $n-t$ as $(n-t)\bmod (b+1)=0$; This is a legal move as ${\kappa}+1\le t\le b$.
    
    If Left starts, by induction she wins by removing $t-{\kappa}$ tokens as $(n-t+{\kappa})\bmod (b+1)={\kappa}$; This is a legal move as $1\le t-{\kappa}\le b-{\kappa}=a$.
\end{proof}


Tables~\ref{tab:CF of ts_4(n,3,7)} and \ref{tab: cf of ts1(n;5,6)} illustrate that the canonical form of {\sc TS} at truncation level $\kappa$ is also periodic with the period $b+1$. We prove this periodicity of the canonical form in the next theorem. We showed that in {\sc Full support}, all Right options reverses out to $0$, and none of the Left options are dominated. However, in {\sc TS} at truncation level $b-a$, both phenomenons are not hold.
In $\ts_1(n;5,6)$ (Table~\ref{tab: cf of ts1(n;5,6)}), not all the Right options reverse out to 0 and the Left option $\ts_1(2;5,6)$ is dominated by $\ts_1(3;5,6)$ and $\ts_1(4;5,6)$ is dominated by $\ts_1(5;5,6)$. 



%


\begin{table}[h!]
\centering
\caption{Left and Right options in the canonical form of $\ts_{1}(n;5,6)$ for $0 \le n \le 14$, Where $\ts_1(5;5,6)=\{1,\{1,\{1\vert 0\}\vert 0\},\{1,\{1,\{1\vert 0\}\vert 0\}\vert 0,\{1\vert 0\}\}\vert 0,\{1\vert 0\}\}$}
\label{tab: cf of ts1(n;5,6)}
\renewcommand{\arraystretch}{1.2}
\begin{tabular}{|c|p{7.6cm}|p{6cm}|}
\hline
$n$ & Left options & Right options \\
\hline
0  & $\varnothing$ & $\varnothing$ \\ \hline
1  & $0$ & $\varnothing$ \\ \hline
2  & $1$ & $0$ \\ \hline
3  & $1,\;\;\{1\vert 0\}$ & $0$ \\ \hline
4  & $1,\;\;\{1,\{1\vert 0\}\vert 0\}$ & $0,\;\;\{1\vert 0\}$ \\ \hline
5  & $1,\;\;\{1,\{1\vert 0\}\vert 0\},\;\;\{1,\{1,\{1\vert 0\}\vert 0\}\vert 0,\{1\vert 0\}\}$ 
   & $0,\;\;\{1\vert 0\}$ \\ \hline
6  & $1,\;\;\{1,\{1\vert 0\}\vert 0\},\;\;\ts_1(5;5,6) $ 
   & $0,\;\;\{1\vert 0\},\;\;\{1,\{1,\{1\vert 0\}\vert 0\}\vert 0,\{1\vert 0\}\}$ \\ \hline
7  & $\varnothing$ & $\varnothing$ \\ \hline
8  & $0$ & $\varnothing$ \\ \hline
9  & $1$ & $0$ \\ \hline
10 & $1,\;\;\;\;\{1\vert 0\}$ & $0$ \\ \hline
11 & $1,\;\;\;\{1,\{1\vert 0\}\vert 0\}$ & $0,\;\;\{1\vert 0\}$ \\ \hline
12 & $1,\;\;\{1,\{1\vert 0\}\vert 0\},\;\;\{1,\{1,\{1\vert 0\}\vert 0\}\vert 0,\{1\vert 0\}\}$ 
   & $0,\;\;\{1\vert 0\}$ \\ \hline
13 & $1,\;\;\{1,\{1\vert 0\}\vert 0\},\;\;\ts_1(5;5,6) $ 
   & $0,\;\;\{1\vert 0\},\;\;\{1,\{1,\{1\vert 0\}\vert 0\}\vert 0,\{1\vert 0\}\}$ \\ \hline
14 & $\varnothing$  & $\varnothing$ \\ \hline
\end{tabular}
\end{table}


\noindent {\bf Theorem~\ref{thm: CF periodicity in TS at b-a}} 
Let $G\coloneq\ts_{{\kappa}}(n;a,b)$ where $a<b$. 
Let $t= n \bmod (b+1)$, with $0\le t\le b$. Then $G=\ts_{{\kappa}}(t;a,b)$.
\begin{proof}
    If $0\le n\le b$, then $t=n$, so the statement is immediate. Hence, suppose $n=v(b+1)+t$ where $v\ge 1$. Let $H=\ts_{{\kappa}}(t;b,a)$. Then, it suffices to prove that $G+H\in \pp$. 
    
    If Left starts and plays on $G$ to $\ts_{{\kappa}}\big(v(b+1)+t-i;a,b\big)+H$ where $i\in [a]$. Then Right removes $b+1-i$ tokens from the same component. This is legal as $b-a+1\le b+1-i\le b$ for all $i\in[a]$. This results in the position $\ts_{\kappa}\big((v-1)(b+1)+t;a,b\big)+H$, which is a $\pp$-position by induction. Hence, Right wins $G+H$. 

    If Left instead starts by moving on $H$, then Right mimics her move on $G$, removing the same number of tokens. 
    This is legal because the Left's subtraction set in $H$ coincides with the Right's subtraction set in $G$,
    and the heap size in $G$ is larger. Right's move leads to $\ts\big(v(b+1)+ t';a,b\big)+\ts( t';b,a)$ for some $0\le  t'< t$. Right continues this mimic strategy as long as Left plays on $H$. 
    Eventually, Left has no legal move remaining in $H$, so she is forced to make a move on $G$. As soon as Left moves on $G$, Right wins by the previous argument.

    If Right starts and plays on $G$ to $\ts_{\kappa}\big(v(b+1)+t-j;a,b\big)+H$ where ${\kappa}+1\le j\le b$. Then Left removes $b+1-j$ tokens from the same component. This is legal as $1\le b+1-j\le a$ for all $j$. This leads the game to $\ts_{\kappa}\big((v-1)(b+1)+t;a,b\big)+H$, which is a $\pp$-position by induction. Hence, Left wins $G+H$. 

    If Right instead starts by moving on $H$, Left mimics his move on $G$, removing the same number of tokens. Right continues this mimic strategy as long as Left plays on $H$. Once Right plays on component $G$, Left wins by the previous argument. Right is forced to make a move on $G$ eventually, as he no longer have a move on component $H$ at that point. 
\end{proof}

Theorem~\ref{thm: CF periodicity in TS at b-a} shows that there are at most $b+1$ distinct canonical forms of {\sc TS} at the knife's edge. Among these, we already know the first ${\kappa}+1$ canonical forms (for heap sizes $0,1,\dots,{\kappa}$) by Lemma~\ref{lem: outcome TS heap<=b}. While a complete description of the remaining canonical forms remains open, we make partial progress by identifying the dominated options of these positions. For details, refer to Proposition~\ref{prop: domination at b-a}. 



In the next subsection, we analyze the {\sc TS} ruleset at lower truncation levels.


\subsection{Lower truncation levels}\label{subsec: lower truncation level}

Throughout this subsection, we assume $\tau<b-a$. Figures~\ref{fig: outcomes of ts(n,3,7)}, \ref{fig: outcome of ts_k(n;5,11)}, together with many other experiments, suggest that the {\sc TS} ruleset is Right-dominating at all lower truncation levels ($\tau<b-a$). That is, the outcome of $\ts_\tau(n;a,b)$ is $\rp$ for all large $n$.

To establish this result, we first analyze the outcome pattern of {\sc TS} for smaller heap sizes, where the outcome has not yet stabilized as $\rp$.
Fix $a$, $b$, and $\tau$. We partition the heap sizes $n\ge1$ into consecutive blocks of length $b+1$. The $\beta$-th {\em block} is the interval
\begin{equation}
[(\beta-1)(b+1)+1,\; \beta(b+1)],
\label{eq:beta}
\end{equation}
where $\beta\ge1$. We will use $\beta$ to index these blocks.

The next two lemmas show that, before the outcome stabilizes as $\rp$, each block begins with a segment of $\lp$-positions, followed by exactly $a$ consecutive $\np$-positions, then a segment of $\rp$-positions, and finally a single $\pp$-position at the end of the block.


\begin{figure}[h!]
\centering
\begin{tikzpicture}[x=0.45cm, y=0.5cm, scale=0.4]
  \tikzset{fade/.style={fill opacity=1}}

  \colorlet{cL}{blue!75}
  \colorlet{cR}{red!75}
  \colorlet{cN}{green!90!black} 
  \colorlet{cP}{black}
  
  \fill[cL,fade] (0.5, 0.5) rectangle (1.5, 1.5);
  \fill[cN, fade] (1.5, 0.5) rectangle (2.5, 1.5);
  \fill[cN, fade] (2.5, 0.5) rectangle (3.5, 1.5);
  \fill[cN, fade] (3.5, 0.5) rectangle (4.5, 1.5);
  \fill[cN, fade] (4.5, 0.5) rectangle (5.5, 1.5);
  \fill[cN, fade] (5.5, 0.5) rectangle (6.5, 1.5);
  \fill[cR, fade] (6.5, 0.5) rectangle (7.5, 1.5);
  \fill[cR, fade] (7.5, 0.5) rectangle (8.5, 1.5);
  \fill[cR, fade] (8.5, 0.5) rectangle (9.5, 1.5);
  \fill[cR, fade] (9.5, 0.5) rectangle (10.5, 1.5);
  \fill[cR, fade] (10.5, 0.5) rectangle (11.5, 1.5);
  \fill[cR, fade] (11.5, 0.5) rectangle (12.5, 1.5);
  \fill[cR, fade] (12.5, 0.5) rectangle (13.5, 1.5);
  \fill[cR, fade] (13.5, 0.5) rectangle (14.5, 1.5);
  \fill[cR, fade] (14.5, 0.5) rectangle (15.5, 1.5);
  \fill[cR, fade] (15.5, 0.5) rectangle (16.5, 1.5);
  \fill[cR, fade] (16.5, 0.5) rectangle (17.5, 1.5);
  \fill[cR, fade] (17.5, 0.5) rectangle (18.5, 1.5);
  \fill[cR, fade] (18.5, 0.5) rectangle (19.5, 1.5);
  \fill[cR, fade] (19.5, 0.5) rectangle (20.5, 1.5);
  \fill[cR, fade] (20.5, 0.5) rectangle (21.5, 1.5);
  \fill[cR, fade] (21.5, 0.5) rectangle (22.5, 1.5);
  \fill[cR, fade] (22.5, 0.5) rectangle (23.5, 1.5);
  \fill[cR, fade] (23.5, 0.5) rectangle (24.5, 1.5);
  \fill[cR, fade] (24.5, 0.5) rectangle (25.5, 1.5);
  \fill[cR, fade] (25.5, 0.5) rectangle (26.5, 1.5);
  \fill[cR, fade] (26.5, 0.5) rectangle (27.5, 1.5);
  \fill[cR, fade] (27.5, 0.5) rectangle (28.5, 1.5);
  \fill[cR, fade] (28.5, 0.5) rectangle (29.5, 1.5);
  \fill[cR, fade] (29.5, 0.5) rectangle (30.5, 1.5);
  \fill[cR, fade] (30.5, 0.5) rectangle (31.5, 1.5);
  \fill[cR, fade] (31.5, 0.5) rectangle (32.5, 1.5);
  \fill[cR, fade] (32.5, 0.5) rectangle (33.5, 1.5);
  \fill[cR, fade] (33.5, 0.5) rectangle (34.5, 1.5);
  \fill[cR, fade] (34.5, 0.5) rectangle (35.5, 1.5);
  \fill[cR, fade] (35.5, 0.5) rectangle (36.5, 1.5);
  \fill[cR, fade] (36.5, 0.5) rectangle (37.5, 1.5);
  \fill[cR, fade] (37.5, 0.5) rectangle (38.5, 1.5);
  \fill[cR, fade] (38.5, 0.5) rectangle (39.5, 1.5);
  \fill[cR, fade] (39.5, 0.5) rectangle (40.5, 1.5);
  \fill[cR, fade] (40.5, 0.5) rectangle (41.5, 1.5);
  \fill[cR, fade] (41.5, 0.5) rectangle (42.5, 1.5);
  \fill[cR, fade] (42.5, 0.5) rectangle (43.5, 1.5);
  \fill[cR, fade] (43.5, 0.5) rectangle (44.5, 1.5);
  \fill[cR, fade] (44.5, 0.5) rectangle (45.5, 1.5);
  \fill[cR, fade] (45.5, 0.5) rectangle (46.5, 1.5);
  \fill[cR, fade] (46.5, 0.5) rectangle (47.5, 1.5);
  \fill[cR, fade] (47.5, 0.5) rectangle (48.5, 1.5);
  \fill[cR, fade] (48.5, 0.5) rectangle (49.5, 1.5);
  \fill[cR, fade] (49.5, 0.5) rectangle (50.5, 1.5);
  \fill[cR, fade] (50.5, 0.5) rectangle (51.5, 1.5);
  \fill[cR, fade] (51.5, 0.5) rectangle (52.5, 1.5);
  \fill[cR, fade] (52.5, 0.5) rectangle (53.5, 1.5);
  \fill[cR, fade] (53.5, 0.5) rectangle (54.5, 1.5);
  \fill[cR, fade] (54.5, 0.5) rectangle (55.5, 1.5);
  \fill[cR, fade] (55.5, 0.5) rectangle (56.5, 1.5);
  \fill[cR, fade] (56.5, 0.5) rectangle (57.5, 1.5);
  \fill[cR, fade] (57.5, 0.5) rectangle (58.5, 1.5);
  \fill[cR, fade] (58.5, 0.5) rectangle (59.5, 1.5);
  \fill[cR, fade] (59.5, 0.5) rectangle (60.5, 1.5);
  \fill[cR, fade] (60.5, 0.5) rectangle (61.5, 1.5);
  \fill[cR, fade] (61.5, 0.5) rectangle (62.5, 1.5);
  \fill[cR, fade] (62.5, 0.5) rectangle (63.5, 1.5);
  \fill[cR, fade] (63.5, 0.5) rectangle (64.5, 1.5);
  \fill[cR, fade] (64.5, 0.5) rectangle (65.5, 1.5);
  \fill[cR, fade] (65.5, 0.5) rectangle (66.5, 1.5);
  \fill[cR, fade] (66.5, 0.5) rectangle (67.5, 1.5);
  \fill[cR, fade] (67.5, 0.5) rectangle (68.5, 1.5);
  \fill[cR, fade] (68.5, 0.5) rectangle (69.5, 1.5);
  \fill[cR, fade] (69.5, 0.5) rectangle (70.5, 1.5);
  \fill[cR, fade] (70.5, 0.5) rectangle (71.5, 1.5);
  \fill[cR, fade] (71.5, 0.5) rectangle (72.5, 1.5);
  \fill[cR, fade] (72.5, 0.5) rectangle (73.5, 1.5);
  \fill[cR, fade] (73.5, 0.5) rectangle (74.5, 1.5);
  \fill[cR, fade] (74.5, 0.5) rectangle (75.5, 1.5);
  \fill[cR, fade] (75.5, 0.5) rectangle (76.5, 1.5);
  \fill[cL, fade] (0.5, 1.5) rectangle (1.5, 2.5);
  \fill[cL, fade] (1.5, 1.5) rectangle (2.5, 2.5);
  \fill[cN, fade] (2.5, 1.5) rectangle (3.5, 2.5);
  \fill[cN, fade] (3.5, 1.5) rectangle (4.5, 2.5);
  \fill[cN, fade] (4.5, 1.5) rectangle (5.5, 2.5);
  \fill[cN, fade] (5.5, 1.5) rectangle (6.5, 2.5);
  \fill[cN, fade] (6.5, 1.5) rectangle (7.5, 2.5);
  \fill[cR, fade] (7.5, 1.5) rectangle (8.5, 2.5);
  \fill[cR, fade] (8.5, 1.5) rectangle (9.5, 2.5);
  \fill[cR, fade] (9.5, 1.5) rectangle (10.5, 2.5);
  \fill[cR, fade] (10.5, 1.5) rectangle (11.5, 2.5);
  \fill[cR, fade] (11.5, 1.5) rectangle (12.5, 2.5);
  \fill[cR, fade] (12.5, 1.5) rectangle (13.5, 2.5);
  \fill[cR, fade] (13.5, 1.5) rectangle (14.5, 2.5);
  \fill[cR, fade] (14.5, 1.5) rectangle (15.5, 2.5);
  \fill[cR, fade] (15.5, 1.5) rectangle (16.5, 2.5);
  \fill[cR, fade] (16.5, 1.5) rectangle (17.5, 2.5);
  \fill[cR, fade] (17.5, 1.5) rectangle (18.5, 2.5);
  \fill[cR, fade] (18.5, 1.5) rectangle (19.5, 2.5);
  \fill[cR, fade] (19.5, 1.5) rectangle (20.5, 2.5);
  \fill[cR, fade] (20.5, 1.5) rectangle (21.5, 2.5);
  \fill[cR, fade] (21.5, 1.5) rectangle (22.5, 2.5);
  \fill[cR, fade] (22.5, 1.5) rectangle (23.5, 2.5);
  \fill[cR, fade] (23.5, 1.5) rectangle (24.5, 2.5);
  \fill[cR, fade] (24.5, 1.5) rectangle (25.5, 2.5);
  \fill[cR, fade] (25.5, 1.5) rectangle (26.5, 2.5);
  \fill[cR, fade] (26.5, 1.5) rectangle (27.5, 2.5);
  \fill[cR, fade] (27.5, 1.5) rectangle (28.5, 2.5);
  \fill[cR, fade] (28.5, 1.5) rectangle (29.5, 2.5);
  \fill[cR, fade] (29.5, 1.5) rectangle (30.5, 2.5);
  \fill[cR, fade] (30.5, 1.5) rectangle (31.5, 2.5);
  \fill[cR, fade] (31.5, 1.5) rectangle (32.5, 2.5);
  \fill[cR, fade] (32.5, 1.5) rectangle (33.5, 2.5);
  \fill[cR, fade] (33.5, 1.5) rectangle (34.5, 2.5);
  \fill[cR, fade] (34.5, 1.5) rectangle (35.5, 2.5);
  \fill[cR, fade] (35.5, 1.5) rectangle (36.5, 2.5);
  \fill[cR, fade] (36.5, 1.5) rectangle (37.5, 2.5);
  \fill[cR, fade] (37.5, 1.5) rectangle (38.5, 2.5);
  \fill[cR, fade] (38.5, 1.5) rectangle (39.5, 2.5);
  \fill[cR, fade] (39.5, 1.5) rectangle (40.5, 2.5);
  \fill[cR, fade] (40.5, 1.5) rectangle (41.5, 2.5);
  \fill[cR, fade] (41.5, 1.5) rectangle (42.5, 2.5);
  \fill[cR, fade] (42.5, 1.5) rectangle (43.5, 2.5);
  \fill[cR, fade] (43.5, 1.5) rectangle (44.5, 2.5);
  \fill[cR, fade] (44.5, 1.5) rectangle (45.5, 2.5);
  \fill[cR, fade] (45.5, 1.5) rectangle (46.5, 2.5);
  \fill[cR, fade] (46.5, 1.5) rectangle (47.5, 2.5);
  \fill[cR, fade] (47.5, 1.5) rectangle (48.5, 2.5);
  \fill[cR, fade] (48.5, 1.5) rectangle (49.5, 2.5);
  \fill[cR, fade] (49.5, 1.5) rectangle (50.5, 2.5);
  \fill[cR, fade] (50.5, 1.5) rectangle (51.5, 2.5);
  \fill[cR, fade] (51.5, 1.5) rectangle (52.5, 2.5);
  \fill[cR, fade] (52.5, 1.5) rectangle (53.5, 2.5);
  \fill[cR, fade] (53.5, 1.5) rectangle (54.5, 2.5);
  \fill[cR, fade] (54.5, 1.5) rectangle (55.5, 2.5);
  \fill[cR, fade] (55.5, 1.5) rectangle (56.5, 2.5);
  \fill[cR, fade] (56.5, 1.5) rectangle (57.5, 2.5);
  \fill[cR, fade] (57.5, 1.5) rectangle (58.5, 2.5);
  \fill[cR, fade] (58.5, 1.5) rectangle (59.5, 2.5);
  \fill[cR, fade] (59.5, 1.5) rectangle (60.5, 2.5);
  \fill[cR, fade] (60.5, 1.5) rectangle (61.5, 2.5);
  \fill[cR, fade] (61.5, 1.5) rectangle (62.5, 2.5);
  \fill[cR, fade] (62.5, 1.5) rectangle (63.5, 2.5);
  \fill[cR, fade] (63.5, 1.5) rectangle (64.5, 2.5);
  \fill[cR, fade] (64.5, 1.5) rectangle (65.5, 2.5);
  \fill[cR, fade] (65.5, 1.5) rectangle (66.5, 2.5);
  \fill[cR, fade] (66.5, 1.5) rectangle (67.5, 2.5);
  \fill[cR, fade] (67.5, 1.5) rectangle (68.5, 2.5);
  \fill[cR, fade] (68.5, 1.5) rectangle (69.5, 2.5);
  \fill[cR, fade] (69.5, 1.5) rectangle (70.5, 2.5);
  \fill[cR, fade] (70.5, 1.5) rectangle (71.5, 2.5);
  \fill[cR, fade] (71.5, 1.5) rectangle (72.5, 2.5);
  \fill[cR, fade] (72.5, 1.5) rectangle (73.5, 2.5);
  \fill[cR, fade] (73.5, 1.5) rectangle (74.5, 2.5);
  \fill[cR, fade] (74.5, 1.5) rectangle (75.5, 2.5);
  \fill[cR, fade] (75.5, 1.5) rectangle (76.5, 2.5);
  \fill[cL, fade] (0.5, 2.5) rectangle (1.5, 3.5);
  \fill[cL, fade] (1.5, 2.5) rectangle (2.5, 3.5);
  \fill[cL, fade] (2.5, 2.5) rectangle (3.5, 3.5);
  \fill[cN, fade] (3.5, 2.5) rectangle (4.5, 3.5);
  \fill[cN, fade] (4.5, 2.5) rectangle (5.5, 3.5);
  \fill[cN, fade] (5.5, 2.5) rectangle (6.5, 3.5);
  \fill[cN, fade] (6.5, 2.5) rectangle (7.5, 3.5);
  \fill[cN, fade] (7.5, 2.5) rectangle (8.5, 3.5);
  \fill[cR, fade] (8.5, 2.5) rectangle (9.5, 3.5);
  \fill[cR, fade] (9.5, 2.5) rectangle (10.5, 3.5);
  \fill[cR, fade] (10.5, 2.5) rectangle (11.5, 3.5);
  \fill[cP, fade] (11.5, 2.5) rectangle (12.5, 3.5);
  \fill[cN, fade] (12.5, 2.5) rectangle (13.5, 3.5);
  \fill[cN, fade] (13.5, 2.5) rectangle (14.5, 3.5);
  \fill[cN, fade] (14.5, 2.5) rectangle (15.5, 3.5);
  \fill[cN, fade] (15.5, 2.5) rectangle (16.5, 3.5);
  \fill[cN, fade] (16.5, 2.5) rectangle (17.5, 3.5);
  \fill[cR, fade] (17.5, 2.5) rectangle (18.5, 3.5);
  \fill[cR, fade] (18.5, 2.5) rectangle (19.5, 3.5);
  \fill[cR, fade] (19.5, 2.5) rectangle (20.5, 3.5);
  \fill[cR, fade] (20.5, 2.5) rectangle (21.5, 3.5);
  \fill[cR, fade] (21.5, 2.5) rectangle (22.5, 3.5);
  \fill[cR, fade] (22.5, 2.5) rectangle (23.5, 3.5);
  \fill[cR, fade] (23.5, 2.5) rectangle (24.5, 3.5);
  \fill[cR, fade] (24.5, 2.5) rectangle (25.5, 3.5);
  \fill[cR, fade] (25.5, 2.5) rectangle (26.5, 3.5);
  \fill[cR, fade] (26.5, 2.5) rectangle (27.5, 3.5);
  \fill[cR, fade] (27.5, 2.5) rectangle (28.5, 3.5);
  \fill[cR, fade] (28.5, 2.5) rectangle (29.5, 3.5);
  \fill[cR, fade] (29.5, 2.5) rectangle (30.5, 3.5);
  \fill[cR, fade] (30.5, 2.5) rectangle (31.5, 3.5);
  \fill[cR, fade] (31.5, 2.5) rectangle (32.5, 3.5);
  \fill[cR, fade] (32.5, 2.5) rectangle (33.5, 3.5);
  \fill[cR, fade] (33.5, 2.5) rectangle (34.5, 3.5);
  \fill[cR, fade] (34.5, 2.5) rectangle (35.5, 3.5);
  \fill[cR, fade] (35.5, 2.5) rectangle (36.5, 3.5);
  \fill[cR, fade] (36.5, 2.5) rectangle (37.5, 3.5);
  \fill[cR, fade] (37.5, 2.5) rectangle (38.5, 3.5);
  \fill[cR, fade] (38.5, 2.5) rectangle (39.5, 3.5);
  \fill[cR, fade] (39.5, 2.5) rectangle (40.5, 3.5);
  \fill[cR, fade] (40.5, 2.5) rectangle (41.5, 3.5);
  \fill[cR, fade] (41.5, 2.5) rectangle (42.5, 3.5);
  \fill[cR, fade] (42.5, 2.5) rectangle (43.5, 3.5);
  \fill[cR, fade] (43.5, 2.5) rectangle (44.5, 3.5);
  \fill[cR, fade] (44.5, 2.5) rectangle (45.5, 3.5);
  \fill[cR, fade] (45.5, 2.5) rectangle (46.5, 3.5);
  \fill[cR, fade] (46.5, 2.5) rectangle (47.5, 3.5);
  \fill[cR, fade] (47.5, 2.5) rectangle (48.5, 3.5);
  \fill[cR, fade] (48.5, 2.5) rectangle (49.5, 3.5);
  \fill[cR, fade] (49.5, 2.5) rectangle (50.5, 3.5);
  \fill[cR, fade] (50.5, 2.5) rectangle (51.5, 3.5);
  \fill[cR, fade] (51.5, 2.5) rectangle (52.5, 3.5);
  \fill[cR, fade] (52.5, 2.5) rectangle (53.5, 3.5);
  \fill[cR, fade] (53.5, 2.5) rectangle (54.5, 3.5);
  \fill[cR, fade] (54.5, 2.5) rectangle (55.5, 3.5);
  \fill[cR, fade] (55.5, 2.5) rectangle (56.5, 3.5);
  \fill[cR, fade] (56.5, 2.5) rectangle (57.5, 3.5);
  \fill[cR, fade] (57.5, 2.5) rectangle (58.5, 3.5);
  \fill[cR, fade] (58.5, 2.5) rectangle (59.5, 3.5);
  \fill[cR, fade] (59.5, 2.5) rectangle (60.5, 3.5);
  \fill[cR, fade] (60.5, 2.5) rectangle (61.5, 3.5);
  \fill[cR, fade] (61.5, 2.5) rectangle (62.5, 3.5);
  \fill[cR, fade] (62.5, 2.5) rectangle (63.5, 3.5);
  \fill[cR, fade] (63.5, 2.5) rectangle (64.5, 3.5);
  \fill[cR, fade] (64.5, 2.5) rectangle (65.5, 3.5);
  \fill[cR, fade] (65.5, 2.5) rectangle (66.5, 3.5);
  \fill[cR, fade] (66.5, 2.5) rectangle (67.5, 3.5);
  \fill[cR, fade] (67.5, 2.5) rectangle (68.5, 3.5);
  \fill[cR, fade] (68.5, 2.5) rectangle (69.5, 3.5);
  \fill[cR, fade] (69.5, 2.5) rectangle (70.5, 3.5);
  \fill[cR, fade] (70.5, 2.5) rectangle (71.5, 3.5);
  \fill[cR, fade] (71.5, 2.5) rectangle (72.5, 3.5);
  \fill[cR, fade] (72.5, 2.5) rectangle (73.5, 3.5);
  \fill[cR, fade] (73.5, 2.5) rectangle (74.5, 3.5);
  \fill[cR, fade] (74.5, 2.5) rectangle (75.5, 3.5);
  \fill[cR, fade] (75.5, 2.5) rectangle (76.5, 3.5);
  \fill[cL, fade] (0.5, 3.5) rectangle (1.5, 4.5);
  \fill[cL, fade] (1.5, 3.5) rectangle (2.5, 4.5);
  \fill[cL, fade] (2.5, 3.5) rectangle (3.5, 4.5);
  \fill[cL, fade] (3.5, 3.5) rectangle (4.5, 4.5);
  \fill[cN, fade] (4.5, 3.5) rectangle (5.5, 4.5);
  \fill[cN, fade] (5.5, 3.5) rectangle (6.5, 4.5);
  \fill[cN, fade] (6.5, 3.5) rectangle (7.5, 4.5);
  \fill[cN, fade] (7.5, 3.5) rectangle (8.5, 4.5);
  \fill[cN, fade] (8.5, 3.5) rectangle (9.5, 4.5);
  \fill[cR, fade] (9.5, 3.5) rectangle (10.5, 4.5);
  \fill[cR, fade] (10.5, 3.5) rectangle (11.5, 4.5);
  \fill[cP, fade] (11.5, 3.5) rectangle (12.5, 4.5);
  \fill[cL, fade] (12.5, 3.5) rectangle (13.5, 4.5);
  \fill[cL, fade] (13.5, 3.5) rectangle (14.5, 4.5);
  \fill[cN, fade] (14.5, 3.5) rectangle (15.5, 4.5);
  \fill[cN, fade] (15.5, 3.5) rectangle (16.5, 4.5);
  \fill[cN, fade] (16.5, 3.5) rectangle (17.5, 4.5);
  \fill[cN, fade] (17.5, 3.5) rectangle (18.5, 4.5);
  \fill[cN, fade] (18.5, 3.5) rectangle (19.5, 4.5);
  \fill[cR, fade] (19.5, 3.5) rectangle (20.5, 4.5);
  \fill[cR, fade] (20.5, 3.5) rectangle (21.5, 4.5);
  \fill[cR, fade] (21.5, 3.5) rectangle (22.5, 4.5);
  \fill[cR, fade] (22.5, 3.5) rectangle (23.5, 4.5);
  \fill[cP, fade] (23.5, 3.5) rectangle (24.5, 4.5);
  \fill[cN, fade] (24.5, 3.5) rectangle (25.5, 4.5);
  \fill[cN, fade] (25.5, 3.5) rectangle (26.5, 4.5);
  \fill[cN, fade] (26.5, 3.5) rectangle (27.5, 4.5);
  \fill[cN, fade] (27.5, 3.5) rectangle (28.5, 4.5);
  \fill[cN, fade] (28.5, 3.5) rectangle (29.5, 4.5);
  \fill[cR, fade] (29.5, 3.5) rectangle (30.5, 4.5);
  \fill[cR, fade] (30.5, 3.5) rectangle (31.5, 4.5);
  \fill[cR, fade] (31.5, 3.5) rectangle (32.5, 4.5);
  \fill[cR, fade] (32.5, 3.5) rectangle (33.5, 4.5);
  \fill[cR, fade] (33.5, 3.5) rectangle (34.5, 4.5);
  \fill[cR, fade] (34.5, 3.5) rectangle (35.5, 4.5);
  \fill[cR, fade] (35.5, 3.5) rectangle (36.5, 4.5);
  \fill[cR, fade] (36.5, 3.5) rectangle (37.5, 4.5);
  \fill[cR, fade] (37.5, 3.5) rectangle (38.5, 4.5);
  \fill[cR, fade] (38.5, 3.5) rectangle (39.5, 4.5);
  \fill[cR, fade] (39.5, 3.5) rectangle (40.5, 4.5);
  \fill[cR, fade] (40.5, 3.5) rectangle (41.5, 4.5);
  \fill[cR, fade] (41.5, 3.5) rectangle (42.5, 4.5);
  \fill[cR, fade] (42.5, 3.5) rectangle (43.5, 4.5);
  \fill[cR, fade] (43.5, 3.5) rectangle (44.5, 4.5);
  \fill[cR, fade] (44.5, 3.5) rectangle (45.5, 4.5);
  \fill[cR, fade] (45.5, 3.5) rectangle (46.5, 4.5);
  \fill[cR, fade] (46.5, 3.5) rectangle (47.5, 4.5);
  \fill[cR, fade] (47.5, 3.5) rectangle (48.5, 4.5);
  \fill[cR, fade] (48.5, 3.5) rectangle (49.5, 4.5);
  \fill[cR, fade] (49.5, 3.5) rectangle (50.5, 4.5);
  \fill[cR, fade] (50.5, 3.5) rectangle (51.5, 4.5);
  \fill[cR, fade] (51.5, 3.5) rectangle (52.5, 4.5);
  \fill[cR, fade] (52.5, 3.5) rectangle (53.5, 4.5);
  \fill[cR, fade] (53.5, 3.5) rectangle (54.5, 4.5);
  \fill[cR, fade] (54.5, 3.5) rectangle (55.5, 4.5);
  \fill[cR, fade] (55.5, 3.5) rectangle (56.5, 4.5);
  \fill[cR, fade] (56.5, 3.5) rectangle (57.5, 4.5);
  \fill[cR, fade] (57.5, 3.5) rectangle (58.5, 4.5);
  \fill[cR, fade] (58.5, 3.5) rectangle (59.5, 4.5);
  \fill[cR, fade] (59.5, 3.5) rectangle (60.5, 4.5);
  \fill[cR, fade] (60.5, 3.5) rectangle (61.5, 4.5);
  \fill[cR, fade] (61.5, 3.5) rectangle (62.5, 4.5);
  \fill[cR, fade] (62.5, 3.5) rectangle (63.5, 4.5);
  \fill[cR, fade] (63.5, 3.5) rectangle (64.5, 4.5);
  \fill[cR, fade] (64.5, 3.5) rectangle (65.5, 4.5);
  \fill[cR, fade] (65.5, 3.5) rectangle (66.5, 4.5);
  \fill[cR, fade] (66.5, 3.5) rectangle (67.5, 4.5);
  \fill[cR, fade] (67.5, 3.5) rectangle (68.5, 4.5);
  \fill[cR, fade] (68.5, 3.5) rectangle (69.5, 4.5);
  \fill[cR, fade] (69.5, 3.5) rectangle (70.5, 4.5);
  \fill[cR, fade] (70.5, 3.5) rectangle (71.5, 4.5);
  \fill[cR, fade] (71.5, 3.5) rectangle (72.5, 4.5);
  \fill[cR, fade] (72.5, 3.5) rectangle (73.5, 4.5);
  \fill[cR, fade] (73.5, 3.5) rectangle (74.5, 4.5);
  \fill[cR, fade] (74.5, 3.5) rectangle (75.5, 4.5);
  \fill[cR, fade] (75.5, 3.5) rectangle (76.5, 4.5);
  \fill[cL, fade] (0.5, 4.5) rectangle (1.5, 5.5);
  \fill[cL, fade] (1.5, 4.5) rectangle (2.5, 5.5);
  \fill[cL, fade] (2.5, 4.5) rectangle (3.5, 5.5);
  \fill[cL, fade] (3.5, 4.5) rectangle (4.5, 5.5);
  \fill[cL, fade] (4.5, 4.5) rectangle (5.5, 5.5);
  \fill[cN, fade] (5.5, 4.5) rectangle (6.5, 5.5);
  \fill[cN, fade] (6.5, 4.5) rectangle (7.5, 5.5);
  \fill[cN, fade] (7.5, 4.5) rectangle (8.5, 5.5);
  \fill[cN, fade] (8.5, 4.5) rectangle (9.5, 5.5);
  \fill[cN, fade] (9.5, 4.5) rectangle (10.5, 5.5);
  \fill[cR, fade] (10.5, 4.5) rectangle (11.5, 5.5);
  \fill[cP, fade] (11.5, 4.5) rectangle (12.5, 5.5);
  \fill[cL, fade] (12.5, 4.5) rectangle (13.5, 5.5);
  \fill[cL, fade] (13.5, 4.5) rectangle (14.5, 5.5);
  \fill[cL, fade] (14.5, 4.5) rectangle (15.5, 5.5);
  \fill[cL, fade] (15.5, 4.5) rectangle (16.5, 5.5);
  \fill[cN, fade] (16.5, 4.5) rectangle (17.5, 5.5);
  \fill[cN, fade] (17.5, 4.5) rectangle (18.5, 5.5);
  \fill[cN, fade] (18.5, 4.5) rectangle (19.5, 5.5);
  \fill[cN, fade] (19.5, 4.5) rectangle (20.5, 5.5);
  \fill[cN, fade] (20.5, 4.5) rectangle (21.5, 5.5);
  \fill[cR, fade] (21.5, 4.5) rectangle (22.5, 5.5);
  \fill[cR, fade] (22.5, 4.5) rectangle (23.5, 5.5);
  \fill[cP, fade] (23.5, 4.5) rectangle (24.5, 5.5);
  \fill[cL, fade] (24.5, 4.5) rectangle (25.5, 5.5);
  \fill[cL, fade] (25.5, 4.5) rectangle (26.5, 5.5);
  \fill[cL, fade] (26.5, 4.5) rectangle (27.5, 5.5);
  \fill[cN, fade] (27.5, 4.5) rectangle (28.5, 5.5);
  \fill[cN, fade] (28.5, 4.5) rectangle (29.5, 5.5);
  \fill[cN, fade] (29.5, 4.5) rectangle (30.5, 5.5);
  \fill[cN, fade] (30.5, 4.5) rectangle (31.5, 5.5);
  \fill[cN, fade] (31.5, 4.5) rectangle (32.5, 5.5);
  \fill[cR, fade] (32.5, 4.5) rectangle (33.5, 5.5);
  \fill[cR, fade] (33.5, 4.5) rectangle (34.5, 5.5);
  \fill[cR, fade] (34.5, 4.5) rectangle (35.5, 5.5);
  \fill[cP, fade] (35.5, 4.5) rectangle (36.5, 5.5);
  \fill[cL, fade] (36.5, 4.5) rectangle (37.5, 5.5);
  \fill[cL, fade] (37.5, 4.5) rectangle (38.5, 5.5);
  \fill[cN, fade] (38.5, 4.5) rectangle (39.5, 5.5);
  \fill[cN, fade] (39.5, 4.5) rectangle (40.5, 5.5);
  \fill[cN, fade] (40.5, 4.5) rectangle (41.5, 5.5);
  \fill[cN, fade] (41.5, 4.5) rectangle (42.5, 5.5);
  \fill[cN, fade] (42.5, 4.5) rectangle (43.5, 5.5);
  \fill[cR, fade] (43.5, 4.5) rectangle (44.5, 5.5);
  \fill[cR, fade] (44.5, 4.5) rectangle (45.5, 5.5);
  \fill[cR, fade] (45.5, 4.5) rectangle (46.5, 5.5);
  \fill[cR, fade] (46.5, 4.5) rectangle (47.5, 5.5);
  \fill[cP, fade] (47.5, 4.5) rectangle (48.5, 5.5);
  \fill[cL, fade] (48.5, 4.5) rectangle (49.5, 5.5);
  \fill[cN, fade] (49.5, 4.5) rectangle (50.5, 5.5);
  \fill[cN, fade] (50.5, 4.5) rectangle (51.5, 5.5);
  \fill[cN, fade] (51.5, 4.5) rectangle (52.5, 5.5);
  \fill[cN, fade] (52.5, 4.5) rectangle (53.5, 5.5);
  \fill[cN, fade] (53.5, 4.5) rectangle (54.5, 5.5);
  \fill[cR, fade] (54.5, 4.5) rectangle (55.5, 5.5);
  \fill[cR, fade] (55.5, 4.5) rectangle (56.5, 5.5);
  \fill[cR, fade] (56.5, 4.5) rectangle (57.5, 5.5);
  \fill[cR, fade] (57.5, 4.5) rectangle (58.5, 5.5);
  \fill[cR, fade] (58.5, 4.5) rectangle (59.5, 5.5);
  \fill[cP, fade] (59.5, 4.5) rectangle (60.5, 5.5);
  \fill[cN, fade] (60.5, 4.5) rectangle (61.5, 5.5);
  \fill[cN, fade] (61.5, 4.5) rectangle (62.5, 5.5);
  \fill[cN, fade] (62.5, 4.5) rectangle (63.5, 5.5);
  \fill[cN, fade] (63.5, 4.5) rectangle (64.5, 5.5);
  \fill[cN, fade] (64.5, 4.5) rectangle (65.5, 5.5);
  \fill[cR, fade] (65.5, 4.5) rectangle (66.5, 5.5);
  \fill[cR, fade] (66.5, 4.5) rectangle (67.5, 5.5);
  \fill[cR, fade] (67.5, 4.5) rectangle (68.5, 5.5);
  \fill[cR, fade] (68.5, 4.5) rectangle (69.5, 5.5);
  \fill[cR, fade] (69.5, 4.5) rectangle (70.5, 5.5);
  \fill[cR, fade] (70.5, 4.5) rectangle (71.5, 5.5);
  \fill[cR, fade] (71.5, 4.5) rectangle (72.5, 5.5);
  \fill[cR, fade] (72.5, 4.5) rectangle (73.5, 5.5);
  \fill[cR, fade] (73.5, 4.5) rectangle (74.5, 5.5);
  \fill[cR, fade] (74.5, 4.5) rectangle (75.5, 5.5);
  \fill[cR, fade] (75.5, 4.5) rectangle (76.5, 5.5);
  \fill[cL] (0.5, 5.5) rectangle (1.5, 6.5);
  \fill[cL] (1.5, 5.5) rectangle (2.5, 6.5);
  \fill[cL] (2.5, 5.5) rectangle (3.5, 6.5);
  \fill[cL] (3.5, 5.5) rectangle (4.5, 6.5);
  \fill[cL] (4.5, 5.5) rectangle (5.5, 6.5);
  \fill[cL] (5.5, 5.5) rectangle (6.5, 6.5);
  \fill[cN] (6.5, 5.5) rectangle (7.5, 6.5);
  \fill[cN] (7.5, 5.5) rectangle (8.5, 6.5);
  \fill[cN] (8.5, 5.5) rectangle (9.5, 6.5);
  \fill[cN] (9.5, 5.5) rectangle (10.5, 6.5);
  \fill[cN] (10.5, 5.5) rectangle (11.5, 6.5);
  \fill[cP] (11.5, 5.5) rectangle (12.5, 6.5);
  \fill[cL] (12.5, 5.5) rectangle (13.5, 6.5);
  \fill[cL] (13.5, 5.5) rectangle (14.5, 6.5);
  \fill[cL] (14.5, 5.5) rectangle (15.5, 6.5);
  \fill[cL] (15.5, 5.5) rectangle (16.5, 6.5);
  \fill[cL] (16.5, 5.5) rectangle (17.5, 6.5);
  \fill[cL] (17.5, 5.5) rectangle (18.5, 6.5);
  \fill[cN] (18.5, 5.5) rectangle (19.5, 6.5);
  \fill[cN] (19.5, 5.5) rectangle (20.5, 6.5);
  \fill[cN] (20.5, 5.5) rectangle (21.5, 6.5);
  \fill[cN] (21.5, 5.5) rectangle (22.5, 6.5);
  \fill[cN] (22.5, 5.5) rectangle (23.5, 6.5);
  \fill[cP] (23.5, 5.5) rectangle (24.5, 6.5);
  \fill[cL] (24.5, 5.5) rectangle (25.5, 6.5);
  \fill[cL] (25.5, 5.5) rectangle (26.5, 6.5);
  \fill[cL] (26.5, 5.5) rectangle (27.5, 6.5);
  \fill[cL] (27.5, 5.5) rectangle (28.5, 6.5);
  \fill[cL] (28.5, 5.5) rectangle (29.5, 6.5);
  \fill[cL] (29.5, 5.5) rectangle (30.5, 6.5);
  \fill[cN] (30.5, 5.5) rectangle (31.5, 6.5);
  \fill[cN] (31.5, 5.5) rectangle (32.5, 6.5);
  \fill[cN] (32.5, 5.5) rectangle (33.5, 6.5);
  \fill[cN] (33.5, 5.5) rectangle (34.5, 6.5);
  \fill[cN] (34.5, 5.5) rectangle (35.5, 6.5);
  \fill[cP] (35.5, 5.5) rectangle (36.5, 6.5);
  \fill[cL] (36.5, 5.5) rectangle (37.5, 6.5);
  \fill[cL] (37.5, 5.5) rectangle (38.5, 6.5);
  \fill[cL] (38.5, 5.5) rectangle (39.5, 6.5);
  \fill[cL] (39.5, 5.5) rectangle (40.5, 6.5);
  \fill[cL] (40.5, 5.5) rectangle (41.5, 6.5);
  \fill[cL] (41.5, 5.5) rectangle (42.5, 6.5);
  \fill[cN] (42.5, 5.5) rectangle (43.5, 6.5);
  \fill[cN] (43.5, 5.5) rectangle (44.5, 6.5);
  \fill[cN] (44.5, 5.5) rectangle (45.5, 6.5);
  \fill[cN] (45.5, 5.5) rectangle (46.5, 6.5);
  \fill[cN] (46.5, 5.5) rectangle (47.5, 6.5);
  \fill[cP] (47.5, 5.5) rectangle (48.5, 6.5);
  \fill[cL] (48.5, 5.5) rectangle (49.5, 6.5);
  \fill[cL] (49.5, 5.5) rectangle (50.5, 6.5);
  \fill[cL] (50.5, 5.5) rectangle (51.5, 6.5);
  \fill[cL] (51.5, 5.5) rectangle (52.5, 6.5);
  \fill[cL] (52.5, 5.5) rectangle (53.5, 6.5);
  \fill[cL] (53.5, 5.5) rectangle (54.5, 6.5);
  \fill[cN] (54.5, 5.5) rectangle (55.5, 6.5);
  \fill[cN] (55.5, 5.5) rectangle (56.5, 6.5);
  \fill[cN] (56.5, 5.5) rectangle (57.5, 6.5);
  \fill[cN] (57.5, 5.5) rectangle (58.5, 6.5);
  \fill[cN] (58.5, 5.5) rectangle (59.5, 6.5);
  \fill[cP] (59.5, 5.5) rectangle (60.5, 6.5);
  \fill[cL] (60.5, 5.5) rectangle (61.5, 6.5);
  \fill[cL] (61.5, 5.5) rectangle (62.5, 6.5);
  \fill[cL] (62.5, 5.5) rectangle (63.5, 6.5);
  \fill[cL] (63.5, 5.5) rectangle (64.5, 6.5);
  \fill[cL] (64.5, 5.5) rectangle (65.5, 6.5);
  \fill[cL] (65.5, 5.5) rectangle (66.5, 6.5);
  \fill[cN] (66.5, 5.5) rectangle (67.5, 6.5);
  \fill[cN] (67.5, 5.5) rectangle (68.5, 6.5);
  \fill[cN] (68.5, 5.5) rectangle (69.5, 6.5);
  \fill[cN] (69.5, 5.5) rectangle (70.5, 6.5);
  \fill[cN] (70.5, 5.5) rectangle (71.5, 6.5);
  \fill[cP] (71.5, 5.5) rectangle (72.5, 6.5);
  \fill[cL] (72.5, 5.5) rectangle (73.5, 6.5);
  \fill[cL] (73.5, 5.5) rectangle (74.5, 6.5);
  \fill[cL] (74.5, 5.5) rectangle (75.5, 6.5);
  \fill[cL] (75.5, 5.5) rectangle (76.5, 6.5);
  \fill[cL, fade] (0.5, 6.5) rectangle (1.5, 7.5);
  \fill[cL, fade] (1.5, 6.5) rectangle (2.5, 7.5);
  \fill[cL, fade] (2.5, 6.5) rectangle (3.5, 7.5);
  \fill[cL, fade] (3.5, 6.5) rectangle (4.5, 7.5);
  \fill[cL, fade] (4.5, 6.5) rectangle (5.5, 7.5);
  \fill[cL, fade] (5.5, 6.5) rectangle (6.5, 7.5);
  \fill[cL, fade] (6.5, 6.5) rectangle (7.5, 7.5);
  \fill[cN, fade] (7.5, 6.5) rectangle (8.5, 7.5);
  \fill[cN, fade] (8.5, 6.5) rectangle (9.5, 7.5);
  \fill[cN, fade] (9.5, 6.5) rectangle (10.5, 7.5);
  \fill[cN, fade] (10.5, 6.5) rectangle (11.5, 7.5);
  \fill[cL, fade] (11.5, 6.5) rectangle (12.5, 7.5);
  \fill[cL, fade] (12.5, 6.5) rectangle (13.5, 7.5);
  \fill[cL, fade] (13.5, 6.5) rectangle (14.5, 7.5);
  \fill[cL, fade] (14.5, 6.5) rectangle (15.5, 7.5);
  \fill[cL, fade] (15.5, 6.5) rectangle (16.5, 7.5);
  \fill[cL, fade] (16.5, 6.5) rectangle (17.5, 7.5);
  \fill[cL, fade] (17.5, 6.5) rectangle (18.5, 7.5);
  \fill[cL, fade] (18.5, 6.5) rectangle (19.5, 7.5);
  \fill[cL, fade] (19.5, 6.5) rectangle (20.5, 7.5);
  \fill[cL, fade] (20.5, 6.5) rectangle (21.5, 7.5);
  \fill[cL, fade] (21.5, 6.5) rectangle (22.5, 7.5);
  \fill[cL, fade] (22.5, 6.5) rectangle (23.5, 7.5);
  \fill[cL, fade] (23.5, 6.5) rectangle (24.5, 7.5);
  \fill[cL, fade] (24.5, 6.5) rectangle (25.5, 7.5);
  \fill[cL, fade] (25.5, 6.5) rectangle (26.5, 7.5);
  \fill[cL, fade] (26.5, 6.5) rectangle (27.5, 7.5);
  \fill[cL, fade] (27.5, 6.5) rectangle (28.5, 7.5);
  \fill[cL, fade] (28.5, 6.5) rectangle (29.5, 7.5);
  \fill[cL, fade] (29.5, 6.5) rectangle (30.5, 7.5);
  \fill[cL, fade] (30.5, 6.5) rectangle (31.5, 7.5);
  \fill[cL, fade] (31.5, 6.5) rectangle (32.5, 7.5);
  \fill[cL, fade] (32.5, 6.5) rectangle (33.5, 7.5);
  \fill[cL, fade] (33.5, 6.5) rectangle (34.5, 7.5);
  \fill[cL, fade] (34.5, 6.5) rectangle (35.5, 7.5);
  \fill[cL, fade] (35.5, 6.5) rectangle (36.5, 7.5);
  \fill[cL, fade] (36.5, 6.5) rectangle (37.5, 7.5);
  \fill[cL, fade] (37.5, 6.5) rectangle (38.5, 7.5);
  \fill[cL, fade] (38.5, 6.5) rectangle (39.5, 7.5);
  \fill[cL, fade] (39.5, 6.5) rectangle (40.5, 7.5);
  \fill[cL, fade] (40.5, 6.5) rectangle (41.5, 7.5);
  \fill[cL, fade] (41.5, 6.5) rectangle (42.5, 7.5);
  \fill[cL, fade] (42.5, 6.5) rectangle (43.5, 7.5);
  \fill[cL, fade] (43.5, 6.5) rectangle (44.5, 7.5);
  \fill[cL, fade] (44.5, 6.5) rectangle (45.5, 7.5);
  \fill[cL, fade] (45.5, 6.5) rectangle (46.5, 7.5);
  \fill[cL, fade] (46.5, 6.5) rectangle (47.5, 7.5);
  \fill[cL, fade] (47.5, 6.5) rectangle (48.5, 7.5);
  \fill[cL, fade] (48.5, 6.5) rectangle (49.5, 7.5);
  \fill[cL, fade] (49.5, 6.5) rectangle (50.5, 7.5);
  \fill[cL, fade] (50.5, 6.5) rectangle (51.5, 7.5);
  \fill[cL, fade] (51.5, 6.5) rectangle (52.5, 7.5);
  \fill[cL, fade] (52.5, 6.5) rectangle (53.5, 7.5);
  \fill[cL, fade] (53.5, 6.5) rectangle (54.5, 7.5);
  \fill[cL, fade] (54.5, 6.5) rectangle (55.5, 7.5);
  \fill[cL, fade] (55.5, 6.5) rectangle (56.5, 7.5);
  \fill[cL, fade] (56.5, 6.5) rectangle (57.5, 7.5);
  \fill[cL, fade] (57.5, 6.5) rectangle (58.5, 7.5);
  \fill[cL, fade] (58.5, 6.5) rectangle (59.5, 7.5);
  \fill[cL, fade] (59.5, 6.5) rectangle (60.5, 7.5);
  \fill[cL, fade] (60.5, 6.5) rectangle (61.5, 7.5);
  \fill[cL, fade] (61.5, 6.5) rectangle (62.5, 7.5);
  \fill[cL, fade] (62.5, 6.5) rectangle (63.5, 7.5);
  \fill[cL, fade] (63.5, 6.5) rectangle (64.5, 7.5);
  \fill[cL, fade] (64.5, 6.5) rectangle (65.5, 7.5);
  \fill[cL, fade] (65.5, 6.5) rectangle (66.5, 7.5);
  \fill[cL, fade] (66.5, 6.5) rectangle (67.5, 7.5);
  \fill[cL, fade] (67.5, 6.5) rectangle (68.5, 7.5);
  \fill[cL, fade] (68.5, 6.5) rectangle (69.5, 7.5);
  \fill[cL, fade] (69.5, 6.5) rectangle (70.5, 7.5);
  \fill[cL, fade] (70.5, 6.5) rectangle (71.5, 7.5);
  \fill[cL, fade] (71.5, 6.5) rectangle (72.5, 7.5);
  \fill[cL, fade] (72.5, 6.5) rectangle (73.5, 7.5);
  \fill[cL, fade] (73.5, 6.5) rectangle (74.5, 7.5);
  \fill[cL, fade] (74.5, 6.5) rectangle (75.5, 7.5);
  \fill[cL, fade] (75.5, 6.5) rectangle (76.5, 7.5);
  \fill[cL, fade] (0.5, 7.5) rectangle (1.5, 8.5);
  \fill[cL, fade] (1.5, 7.5) rectangle (2.5, 8.5);
  \fill[cL, fade] (2.5, 7.5) rectangle (3.5, 8.5);
  \fill[cL, fade] (3.5, 7.5) rectangle (4.5, 8.5);
  \fill[cL, fade] (4.5, 7.5) rectangle (5.5, 8.5);
  \fill[cL, fade] (5.5, 7.5) rectangle (6.5, 8.5);
  \fill[cL, fade] (6.5, 7.5) rectangle (7.5, 8.5);
  \fill[cL, fade] (7.5, 7.5) rectangle (8.5, 8.5);
  \fill[cN, fade] (8.5, 7.5) rectangle (9.5, 8.5);
  \fill[cN, fade] (9.5, 7.5) rectangle (10.5, 8.5);
  \fill[cN, fade] (10.5, 7.5) rectangle (11.5, 8.5);
  \fill[cL, fade] (11.5, 7.5) rectangle (12.5, 8.5);
  \fill[cL, fade] (12.5, 7.5) rectangle (13.5, 8.5);
  \fill[cL, fade] (13.5, 7.5) rectangle (14.5, 8.5);
  \fill[cL, fade] (14.5, 7.5) rectangle (15.5, 8.5);
  \fill[cL, fade] (15.5, 7.5) rectangle (16.5, 8.5);
  \fill[cL, fade] (16.5, 7.5) rectangle (17.5, 8.5);
  \fill[cL, fade] (17.5, 7.5) rectangle (18.5, 8.5);
  \fill[cL, fade] (18.5, 7.5) rectangle (19.5, 8.5);
  \fill[cL, fade] (19.5, 7.5) rectangle (20.5, 8.5);
  \fill[cL, fade] (20.5, 7.5) rectangle (21.5, 8.5);
  \fill[cL, fade] (21.5, 7.5) rectangle (22.5, 8.5);
  \fill[cL, fade] (22.5, 7.5) rectangle (23.5, 8.5);
  \fill[cL, fade] (23.5, 7.5) rectangle (24.5, 8.5);
  \fill[cL, fade] (24.5, 7.5) rectangle (25.5, 8.5);
  \fill[cL, fade] (25.5, 7.5) rectangle (26.5, 8.5);
  \fill[cL, fade] (26.5, 7.5) rectangle (27.5, 8.5);
  \fill[cL, fade] (27.5, 7.5) rectangle (28.5, 8.5);
  \fill[cL, fade] (28.5, 7.5) rectangle (29.5, 8.5);
  \fill[cL, fade] (29.5, 7.5) rectangle (30.5, 8.5);
  \fill[cL, fade] (30.5, 7.5) rectangle (31.5, 8.5);
  \fill[cL, fade] (31.5, 7.5) rectangle (32.5, 8.5);
  \fill[cL, fade] (32.5, 7.5) rectangle (33.5, 8.5);
  \fill[cL, fade] (33.5, 7.5) rectangle (34.5, 8.5);
  \fill[cL, fade] (34.5, 7.5) rectangle (35.5, 8.5);
  \fill[cL, fade] (35.5, 7.5) rectangle (36.5, 8.5);
  \fill[cL, fade] (36.5, 7.5) rectangle (37.5, 8.5);
  \fill[cL, fade] (37.5, 7.5) rectangle (38.5, 8.5);
  \fill[cL, fade] (38.5, 7.5) rectangle (39.5, 8.5);
  \fill[cL, fade] (39.5, 7.5) rectangle (40.5, 8.5);
  \fill[cL, fade] (40.5, 7.5) rectangle (41.5, 8.5);
  \fill[cL, fade] (41.5, 7.5) rectangle (42.5, 8.5);
  \fill[cL, fade] (42.5, 7.5) rectangle (43.5, 8.5);
  \fill[cL, fade] (43.5, 7.5) rectangle (44.5, 8.5);
  \fill[cL, fade] (44.5, 7.5) rectangle (45.5, 8.5);
  \fill[cL, fade] (45.5, 7.5) rectangle (46.5, 8.5);
  \fill[cL, fade] (46.5, 7.5) rectangle (47.5, 8.5);
  \fill[cL, fade] (47.5, 7.5) rectangle (48.5, 8.5);
  \fill[cL, fade] (48.5, 7.5) rectangle (49.5, 8.5);
  \fill[cL, fade] (49.5, 7.5) rectangle (50.5, 8.5);
  \fill[cL, fade] (50.5, 7.5) rectangle (51.5, 8.5);
  \fill[cL, fade] (51.5, 7.5) rectangle (52.5, 8.5);
  \fill[cL, fade] (52.5, 7.5) rectangle (53.5, 8.5);
  \fill[cL, fade] (53.5, 7.5) rectangle (54.5, 8.5);
  \fill[cL, fade] (54.5, 7.5) rectangle (55.5, 8.5);
  \fill[cL, fade] (55.5, 7.5) rectangle (56.5, 8.5);
  \fill[cL, fade] (56.5, 7.5) rectangle (57.5, 8.5);
  \fill[cL, fade] (57.5, 7.5) rectangle (58.5, 8.5);
  \fill[cL, fade] (58.5, 7.5) rectangle (59.5, 8.5);
  \fill[cL, fade] (59.5, 7.5) rectangle (60.5, 8.5);
  \fill[cL, fade] (60.5, 7.5) rectangle (61.5, 8.5);
  \fill[cL, fade] (61.5, 7.5) rectangle (62.5, 8.5);
  \fill[cL, fade] (62.5, 7.5) rectangle (63.5, 8.5);
  \fill[cL, fade] (63.5, 7.5) rectangle (64.5, 8.5);
  \fill[cL, fade] (64.5, 7.5) rectangle (65.5, 8.5);
  \fill[cL, fade] (65.5, 7.5) rectangle (66.5, 8.5);
  \fill[cL, fade] (66.5, 7.5) rectangle (67.5, 8.5);
  \fill[cL, fade] (67.5, 7.5) rectangle (68.5, 8.5);
  \fill[cL, fade] (68.5, 7.5) rectangle (69.5, 8.5);
  \fill[cL, fade] (69.5, 7.5) rectangle (70.5, 8.5);
  \fill[cL, fade] (70.5, 7.5) rectangle (71.5, 8.5);
  \fill[cL, fade] (71.5, 7.5) rectangle (72.5, 8.5);
  \fill[cL, fade] (72.5, 7.5) rectangle (73.5, 8.5);
  \fill[cL, fade] (73.5, 7.5) rectangle (74.5, 8.5);
  \fill[cL, fade] (74.5, 7.5) rectangle (75.5, 8.5);
  \fill[cL, fade] (75.5, 7.5) rectangle (76.5, 8.5);
  \fill[cL, fade] (0.5, 8.5) rectangle (1.5, 9.5);
  \fill[cL, fade] (1.5, 8.5) rectangle (2.5, 9.5);
  \fill[cL, fade] (2.5, 8.5) rectangle (3.5, 9.5);
  \fill[cL, fade] (3.5, 8.5) rectangle (4.5, 9.5);
  \fill[cL, fade] (4.5, 8.5) rectangle (5.5, 9.5);
  \fill[cL, fade] (5.5, 8.5) rectangle (6.5, 9.5);
  \fill[cL, fade] (6.5, 8.5) rectangle (7.5, 9.5);
  \fill[cL, fade] (7.5, 8.5) rectangle (8.5, 9.5);
  \fill[cL, fade] (8.5, 8.5) rectangle (9.5, 9.5);
  \fill[cN, fade] (9.5, 8.5) rectangle (10.5, 9.5);
  \fill[cN, fade] (10.5, 8.5) rectangle (11.5, 9.5);
  \fill[cL, fade] (11.5, 8.5) rectangle (12.5, 9.5);
  \fill[cL, fade] (12.5, 8.5) rectangle (13.5, 9.5);
  \fill[cL, fade] (13.5, 8.5) rectangle (14.5, 9.5);
  \fill[cL, fade] (14.5, 8.5) rectangle (15.5, 9.5);
  \fill[cL, fade] (15.5, 8.5) rectangle (16.5, 9.5);
  \fill[cL, fade] (16.5, 8.5) rectangle (17.5, 9.5);
  \fill[cL, fade] (17.5, 8.5) rectangle (18.5, 9.5);
  \fill[cL, fade] (18.5, 8.5) rectangle (19.5, 9.5);
  \fill[cL, fade] (19.5, 8.5) rectangle (20.5, 9.5);
  \fill[cL, fade] (20.5, 8.5) rectangle (21.5, 9.5);
  \fill[cL, fade] (21.5, 8.5) rectangle (22.5, 9.5);
  \fill[cL, fade] (22.5, 8.5) rectangle (23.5, 9.5);
  \fill[cL, fade] (23.5, 8.5) rectangle (24.5, 9.5);
  \fill[cL, fade] (24.5, 8.5) rectangle (25.5, 9.5);
  \fill[cL, fade] (25.5, 8.5) rectangle (26.5, 9.5);
  \fill[cL, fade] (26.5, 8.5) rectangle (27.5, 9.5);
  \fill[cL, fade] (27.5, 8.5) rectangle (28.5, 9.5);
  \fill[cL, fade] (28.5, 8.5) rectangle (29.5, 9.5);
  \fill[cL, fade] (29.5, 8.5) rectangle (30.5, 9.5);
  \fill[cL, fade] (30.5, 8.5) rectangle (31.5, 9.5);
  \fill[cL, fade] (31.5, 8.5) rectangle (32.5, 9.5);
  \fill[cL, fade] (32.5, 8.5) rectangle (33.5, 9.5);
  \fill[cL, fade] (33.5, 8.5) rectangle (34.5, 9.5);
  \fill[cL, fade] (34.5, 8.5) rectangle (35.5, 9.5);
  \fill[cL, fade] (35.5, 8.5) rectangle (36.5, 9.5);
  \fill[cL, fade] (36.5, 8.5) rectangle (37.5, 9.5);
  \fill[cL, fade] (37.5, 8.5) rectangle (38.5, 9.5);
  \fill[cL, fade] (38.5, 8.5) rectangle (39.5, 9.5);
  \fill[cL, fade] (39.5, 8.5) rectangle (40.5, 9.5);
  \fill[cL, fade] (40.5, 8.5) rectangle (41.5, 9.5);
  \fill[cL, fade] (41.5, 8.5) rectangle (42.5, 9.5);
  \fill[cL, fade] (42.5, 8.5) rectangle (43.5, 9.5);
  \fill[cL, fade] (43.5, 8.5) rectangle (44.5, 9.5);
  \fill[cL, fade] (44.5, 8.5) rectangle (45.5, 9.5);
  \fill[cL, fade] (45.5, 8.5) rectangle (46.5, 9.5);
  \fill[cL, fade] (46.5, 8.5) rectangle (47.5, 9.5);
  \fill[cL, fade] (47.5, 8.5) rectangle (48.5, 9.5);
  \fill[cL, fade] (48.5, 8.5) rectangle (49.5, 9.5);
  \fill[cL, fade] (49.5, 8.5) rectangle (50.5, 9.5);
  \fill[cL, fade] (50.5, 8.5) rectangle (51.5, 9.5);
  \fill[cL, fade] (51.5, 8.5) rectangle (52.5, 9.5);
  \fill[cL, fade] (52.5, 8.5) rectangle (53.5, 9.5);
  \fill[cL, fade] (53.5, 8.5) rectangle (54.5, 9.5);
  \fill[cL, fade] (54.5, 8.5) rectangle (55.5, 9.5);
  \fill[cL, fade] (55.5, 8.5) rectangle (56.5, 9.5);
  \fill[cL, fade] (56.5, 8.5) rectangle (57.5, 9.5);
  \fill[cL, fade] (57.5, 8.5) rectangle (58.5, 9.5);
  \fill[cL, fade] (58.5, 8.5) rectangle (59.5, 9.5);
  \fill[cL, fade] (59.5, 8.5) rectangle (60.5, 9.5);
  \fill[cL, fade] (60.5, 8.5) rectangle (61.5, 9.5);
  \fill[cL, fade] (61.5, 8.5) rectangle (62.5, 9.5);
  \fill[cL, fade] (62.5, 8.5) rectangle (63.5, 9.5);
  \fill[cL, fade] (63.5, 8.5) rectangle (64.5, 9.5);
  \fill[cL, fade] (64.5, 8.5) rectangle (65.5, 9.5);
  \fill[cL, fade] (65.5, 8.5) rectangle (66.5, 9.5);
  \fill[cL, fade] (66.5, 8.5) rectangle (67.5, 9.5);
  \fill[cL, fade] (67.5, 8.5) rectangle (68.5, 9.5);
  \fill[cL, fade] (68.5, 8.5) rectangle (69.5, 9.5);
  \fill[cL, fade] (69.5, 8.5) rectangle (70.5, 9.5);
  \fill[cL, fade] (70.5, 8.5) rectangle (71.5, 9.5);
  \fill[cL, fade] (71.5, 8.5) rectangle (72.5, 9.5);
  \fill[cL, fade] (72.5, 8.5) rectangle (73.5, 9.5);
  \fill[cL, fade] (73.5, 8.5) rectangle (74.5, 9.5);
  \fill[cL, fade] (74.5, 8.5) rectangle (75.5, 9.5);
  \fill[cL, fade] (75.5, 8.5) rectangle (76.5, 9.5);
  \fill[cL, fade] (0.5, 9.5) rectangle (1.5, 10.5);
  \fill[cL, fade] (1.5, 9.5) rectangle (2.5, 10.5);
  \fill[cL, fade] (2.5, 9.5) rectangle (3.5, 10.5);
  \fill[cL, fade] (3.5, 9.5) rectangle (4.5, 10.5);
  \fill[cL, fade] (4.5, 9.5) rectangle (5.5, 10.5);
  \fill[cL, fade] (5.5, 9.5) rectangle (6.5, 10.5);
  \fill[cL, fade] (6.5, 9.5) rectangle (7.5, 10.5);
  \fill[cL, fade] (7.5, 9.5) rectangle (8.5, 10.5);
  \fill[cL, fade] (8.5, 9.5) rectangle (9.5, 10.5);
  \fill[cL, fade] (9.5, 9.5) rectangle (10.5, 10.5);
  \fill[cN, fade] (10.5, 9.5) rectangle (11.5, 10.5);
  \fill[cL, fade] (11.5, 9.5) rectangle (12.5, 10.5);
  \fill[cL, fade] (12.5, 9.5) rectangle (13.5, 10.5);
  \fill[cL, fade] (13.5, 9.5) rectangle (14.5, 10.5);
  \fill[cL, fade] (14.5, 9.5) rectangle (15.5, 10.5);
  \fill[cL, fade] (15.5, 9.5) rectangle (16.5, 10.5);
  \fill[cL, fade] (16.5, 9.5) rectangle (17.5, 10.5);
  \fill[cL, fade] (17.5, 9.5) rectangle (18.5, 10.5);
  \fill[cL, fade] (18.5, 9.5) rectangle (19.5, 10.5);
  \fill[cL, fade] (19.5, 9.5) rectangle (20.5, 10.5);
  \fill[cL, fade] (20.5, 9.5) rectangle (21.5, 10.5);
  \fill[cL, fade] (21.5, 9.5) rectangle (22.5, 10.5);
  \fill[cL, fade] (22.5, 9.5) rectangle (23.5, 10.5);
  \fill[cL, fade] (23.5, 9.5) rectangle (24.5, 10.5);
  \fill[cL, fade] (24.5, 9.5) rectangle (25.5, 10.5);
  \fill[cL, fade] (25.5, 9.5) rectangle (26.5, 10.5);
  \fill[cL, fade] (26.5, 9.5) rectangle (27.5, 10.5);
  \fill[cL, fade] (27.5, 9.5) rectangle (28.5, 10.5);
  \fill[cL, fade] (28.5, 9.5) rectangle (29.5, 10.5);
  \fill[cL, fade] (29.5, 9.5) rectangle (30.5, 10.5);
  \fill[cL, fade] (30.5, 9.5) rectangle (31.5, 10.5);
  \fill[cL, fade] (31.5, 9.5) rectangle (32.5, 10.5);
  \fill[cL, fade] (32.5, 9.5) rectangle (33.5, 10.5);
  \fill[cL, fade] (33.5, 9.5) rectangle (34.5, 10.5);
  \fill[cL, fade] (34.5, 9.5) rectangle (35.5, 10.5);
  \fill[cL, fade] (35.5, 9.5) rectangle (36.5, 10.5);
  \fill[cL, fade] (36.5, 9.5) rectangle (37.5, 10.5);
  \fill[cL, fade] (37.5, 9.5) rectangle (38.5, 10.5);
  \fill[cL, fade] (38.5, 9.5) rectangle (39.5, 10.5);
  \fill[cL, fade] (39.5, 9.5) rectangle (40.5, 10.5);
  \fill[cL, fade] (40.5, 9.5) rectangle (41.5, 10.5);
  \fill[cL, fade] (41.5, 9.5) rectangle (42.5, 10.5);
  \fill[cL, fade] (42.5, 9.5) rectangle (43.5, 10.5);
  \fill[cL, fade] (43.5, 9.5) rectangle (44.5, 10.5);
  \fill[cL, fade] (44.5, 9.5) rectangle (45.5, 10.5);
  \fill[cL, fade] (45.5, 9.5) rectangle (46.5, 10.5);
  \fill[cL, fade] (46.5, 9.5) rectangle (47.5, 10.5);
  \fill[cL, fade] (47.5, 9.5) rectangle (48.5, 10.5);
  \fill[cL, fade] (48.5, 9.5) rectangle (49.5, 10.5);
  \fill[cL, fade] (49.5, 9.5) rectangle (50.5, 10.5);
  \fill[cL, fade] (50.5, 9.5) rectangle (51.5, 10.5);
  \fill[cL, fade] (51.5, 9.5) rectangle (52.5, 10.5);
  \fill[cL, fade] (52.5, 9.5) rectangle (53.5, 10.5);
  \fill[cL, fade] (53.5, 9.5) rectangle (54.5, 10.5);
  \fill[cL, fade] (54.5, 9.5) rectangle (55.5, 10.5);
  \fill[cL, fade] (55.5, 9.5) rectangle (56.5, 10.5);
  \fill[cL, fade] (56.5, 9.5) rectangle (57.5, 10.5);
  \fill[cL, fade] (57.5, 9.5) rectangle (58.5, 10.5);
  \fill[cL, fade] (58.5, 9.5) rectangle (59.5, 10.5);
  \fill[cL, fade] (59.5, 9.5) rectangle (60.5, 10.5);
  \fill[cL, fade] (60.5, 9.5) rectangle (61.5, 10.5);
  \fill[cL, fade] (61.5, 9.5) rectangle (62.5, 10.5);
  \fill[cL, fade] (62.5, 9.5) rectangle (63.5, 10.5);
  \fill[cL, fade] (63.5, 9.5) rectangle (64.5, 10.5);
  \fill[cL, fade] (64.5, 9.5) rectangle (65.5, 10.5);
  \fill[cL, fade] (65.5, 9.5) rectangle (66.5, 10.5);
  \fill[cL, fade] (66.5, 9.5) rectangle (67.5, 10.5);
  \fill[cL, fade] (67.5, 9.5) rectangle (68.5, 10.5);
  \fill[cL, fade] (68.5, 9.5) rectangle (69.5, 10.5);
  \fill[cL, fade] (69.5, 9.5) rectangle (70.5, 10.5);
  \fill[cL, fade] (70.5, 9.5) rectangle (71.5, 10.5);
  \fill[cL, fade] (71.5, 9.5) rectangle (72.5, 10.5);
  \fill[cL, fade] (72.5, 9.5) rectangle (73.5, 10.5);
  \fill[cL, fade] (73.5, 9.5) rectangle (74.5, 10.5);
  \fill[cL, fade] (74.5, 9.5) rectangle (75.5, 10.5);
  \fill[cL, fade] (75.5, 9.5) rectangle (76.5, 10.5);
  
  
  
  \foreach \n in {1, 5, 10,..., 75} \node[below, font=\tiny] at (\n, 0.5) {\n};
  \foreach \k in {2,4,...,10}{
    \ifnum\k=6
        \node[left,font=\tiny] at (0.5,\k) {\textbf{\k}};
    \else
        \node[left,font=\tiny] at (0.5,\k) {\k};
    \fi
}
  \node[below=0.4cm] at (38.5, 0.5) {\tiny Heap Size ($n$)};
  \node[rotate=90, above=1cm] at (-0.8, 0) {\tiny Truncation Level ($\tau$)};


    \draw[yellow!90,dashed, line width=0.8pt]
    (0.5,6) -- (76.5,6);
  \begin{scope}[shift={(77.5, 5.5)}]
    \draw[fill=cL, fade] (0, 0.8) rectangle (0.8, 1.6); \node[right] at (1, 1.2) {\tiny $\mathscr{L}$};
    \draw[fill=cR, fade] (0, -0.4) rectangle (0.8, 0.4); \node[right] at (1, 0) {\tiny $\mathscr{R}$};
    \draw[fill=cN, fade] (0, -1.6) rectangle (0.8, -0.8); \node[right] at (1, -1.2) {\tiny $\mathscr{N}$};
    \draw[fill=cP, fade] (0, -2.8) rectangle (0.8, -2.0); \node[right] at (1, -2.4) {\tiny$\mathscr{P}$};
  \end{scope}
\end{tikzpicture}
\caption{Outcomes of $\ts_\tau(n; 5, 11)$. Blue, red, green, and black cells represent $\mathscr{L}$, $\mathscr{R}$, $\mathscr{N}$, and $\mathscr{P}$ outcomes, respectively. The yellow dashed line corresponds to truncation level $b-a$. \iffalse The row corresponding to the truncation level $b-a$ is highlighted.\fi }
\label{fig: outcome of ts_k(n;5,11)}
\end{figure}

\begin{example}
In $H=\ts_4(n;5,11)$, the blocks are $[1,12],\;[13,24],\dots$ (refer to Figure~\ref{fig: outcome of ts_k(n;5,11)}). The first block starts with four $\lp$'s, followed by five $\np$'s, two $\rp$'s and one $\pp$.
\end{example} 

The first block contains $\tau$ consecutive $\lp$-positions. From one block to the next, the number of $\lp$-positions decreases by exactly $b-a-\tau$, while the number of $\rp$-positions increases by the same amount. 
This process continues until fewer than $b-a-\tau$  $\;\lp$-positions remain in a block. This occurs in the block with index
 \begin{align}
    \hbeta=\left \lfloor \frac{b-a}{b-a-\tau}\right\rfloor.\label{eq:betahat}
\end{align} 



Unlike the preceding blocks, the $\hbeta$-th block contains no $\pp$-position. Instead, it begins with a segment of $\lp$-positions, followed by $a$ consecutive $\np$-positions, and all remaining positions in the block are $\rp$.

When $\tau$ is close to $b-a$, the quantity $b-a-\tau$ is small, so the $\lp$-segment shrinks slowly from one block to the next and consequently, $\hbeta$ is large. Conversely, when $\tau$ is much smaller than $b-a$, the decrease is much faster and hence, $\hbeta$ is small.



For each block with index $\beta<\hbeta$, we define \begin{align}
    L_\beta&\coloneq(\beta-1)(b+1)+1,\label{eq:L-beta}\\
    N_\beta&\coloneq\beta(a+\tau+1)-a,\label{eq:N-beta}\\
    R_\beta&\coloneq\beta(a+\tau+1),\label{eq:R-beta}\\
    P_\beta&\coloneq\beta(b+1).\label{eq:P-beta}
\end{align} In the next lemma, we will see that $L_\beta$, $N_\beta$, $R_\beta$, and $P_\beta$ are the heap sizes corresponding to the first $\lp$-, $\np$-, $\rp$-, and $\pp$-position in $\beta$-th block, respectively. 

\begin{lemma}\label{lem: outcome ts below alpha block}
    Let $a,b$ and $\tau$ are positive integers such that $a<b$ and $\tau<b-a$. Then, for all $\beta<\hbeta$, 
    \begin{align*}
        o\big(\ts_\tau(n;a,b)\big)=\begin{cases}
            \lp & \text{ if } L_\beta \le  n<N_\beta ;\\
            \np & \text{ if } N_\beta\le  n< R_\beta;\\
            \rp & \text{ if } R_\beta\le  n< P_\beta;\\
            \pp & \text{ if } n=P_\beta.
        \end{cases}
    \end{align*}
\end{lemma}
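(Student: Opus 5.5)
We argue by strong induction on $n$, proving the four-case description block by block for $\beta=1,2,\dots,\hbeta-1$. Write $d:=b-a-\tau>0$. First we check the arithmetic of the breakpoints: $N_\beta-L_\beta=\tau-(\beta-1)d$, $R_\beta-N_\beta=a$, $P_\beta-R_\beta=\beta d-1$. Since $\beta<\hbeta=\lfloor (b-a)/d\rfloor$, we have $\beta d<b-a$, so $\tau-(\beta-1)d=b-a-\beta d\ge 1$. Hence the $\lp$-segment is nonempty, and the four segments partition the block in the stated order. The base block $\beta=1$ reads $L_1=1$, $N_1=\tau+1$, $R_1=a+\tau+1$, $P_1=b+1$. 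For $n\le b$ this is exactly Lemma~\ref{lem: outcome TS heap<=b}, since $\tau+a<b$. So only $n=P_1=b+1$ needs the general argument below. We also use $o(\ts(0))=\pp$, so $0$ acts as "$P_0$".

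For the inductive step, fix $n$ in block $\beta$ and assume the pattern for all smaller heaps (earlier blocks and the earlier part of block $\beta$). The key observation: from $n$, Left's options fill the window $[n-a,n-1]$ and Right's fill $[n-b,n-\tau-1]$. We compare these windows with the known segments. Four cases follow.
\begin{enumerate}
\item $L_\beta\le n<N_\beta$. Left moves to $n-1$, which is in the $\lp$-segment or equals $P_{\beta-1}$; either way Left wins moving second. Right's options lie in $[n-b,n-\tau-1]$. Since $n-\tau-1<N_\beta-\tau-1=P_{\beta-1}-(\beta-1)d$, this window sits inside $[L_{\beta-1},P_{\beta-1})$ and avoids $P_{\beta-1}$. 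We must check it also avoids the $\rp$-segment of block $\beta-1$, which is the delicate inequality $n-\tau-1<R_{\beta-1}$. Indeed $N_\beta-\tau-1=R_{\beta-1}$, and also $n-b\ge L_{\beta-1}$. So Right lands only on $\lp/\np$ positions, and $n\in\lp$.
\item $N_\beta\le n<R_\beta$. Left moves to $N_\beta-1$, the last $\lp$ position, which is at most $a$ below $n$. Right moves to $n-\tau-1\in[R_{\beta-1},R_\beta-\tau-1)\subseteq[R_{\beta-1},P_{\beta-1}]$. This is an $\rp$ or $\pp$ position, and the window must be checked to reach it, since $n-b\le R_{\beta-1}$ follows from $a<b$. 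Hence $n\in\np$.
\item $R_\beta\le n<P_\beta$. Left's window $[n-a,n-1]$ lies in $[N_\beta,P_\beta)$, which consists of $\np$ and $\rp$ positions, so Right wins after any Left move. Right moves to $P_{\beta-1}$, which is legal since $n-P_{\beta-1}\in[R_\beta-P_{\beta-1},b]$ and $R_\beta-P_{\beta-1}=\tau+1+a-(\beta-1)d-a\cdots\ge\tau+1$ by the positivity above.
\item $n=P_\beta$. Left's window lies in $[R_\beta,P_\beta)$, all $\rp$. Right's window $[L_\beta-1,\,P_\beta-\tau-1]$ consists of $P_{\beta-1}$ and block-$\beta$ positions below $R_\beta$; the upper end satisfies $P_\beta-\tau-1<R_\beta$ iff $\beta d<b-a$. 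These are all $\lp$ or $\np$ positions plus a $\pp$, so Left wins moving second.
\end{enumerate}

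The main obstacle is the boundary bookkeeping in cases (1) and (4). Each window must stay inside the correct segments, and each such check reduces to the single inequality $\beta d<b-a$, that is, $\beta<\hbeta$. For $\beta=1$ the role of block $0$ is played by the heap $0$ alone, and we verify that case separately via Lemma~\ref{lem: outcome TS heap<=b}.
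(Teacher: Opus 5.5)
Your skeleton (strong induction over blocks, comparing Left's window $[n-a,n-1]$ and Right's window $[n-b,n-\tau-1]$ with the segments of blocks $\beta-1$ and $\beta$) is the paper's, and case (1) is fine. However, the winning moves you prescribe for Right in cases (2) and (3) fail in general.

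\emph{Case (3).} Reaching $P_{\beta-1}$ needs $n-P_{\beta-1}\ge\tau+1$. But $R_\beta-P_{\beta-1}=a+\tau+1-(\beta-1)d$, which is $\ge\tau+1$ only if $(\beta-1)d\le a$. The positivity $\tau-(\beta-1)d\ge1$ does not give this when $\tau>a$. For $a=1$, $b=10$, $\tau=8$ (so $d=1$, $\hbeta=9$) and $n=R_3=30$, you get $n-P_2=8\le\tau$, so the move is illegal.

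\emph{Case (2).} Here $R_\beta-\tau-1=R_{\beta-1}+a$, so your inclusion $[R_{\beta-1},R_\beta-\tau-1)\subseteq[R_{\beta-1},P_{\beta-1}]$ needs $a-1\le(\beta-1)d$. For $a=3$, $b=7$, $\tau=3$ and $n=13\in[N_2,R_2)$, the move to $n-\tau-1=9=L_2$ lands on an $\lp$-position and loses.

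The repair, as in the paper, is to let Right aim at the whole interval $[R_{\beta-1},P_{\beta-1}]$ (all $\rp$ or $\pp$) and check that his window meets it. In case (2), $n-b<R_{\beta-1}\le n-\tau-1$, so he moves to $R_{\beta-1}$; the first inequality uses $a+\tau<b$, not merely $a<b$. In case (3), $n-b\le P_{\beta-1}$ and $n-\tau-1\ge R_{\beta-1}+a$.

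Case (4), where the hypothesis $\beta<\hbeta$ really matters, is also wrong. Right's window is $[P_\beta-b,\,P_\beta-\tau-1]=[L_\beta,\,P_\beta-\tau-1]$, because $P_\beta-b=L_\beta$. It contains neither $P_{\beta-1}$ nor (for $\beta=1$) the empty heap. Your inference ``$\lp$ or $\np$ positions plus a $\pp$, so Left wins'' is backwards: a $\pp$-option would be a winning move for Right.

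More importantly, $P_\beta-\tau-1-R_\beta=\beta d-\tau-1$. So $P_\beta-\tau-1<R_\beta$ is equivalent to $\beta d\le\tau$, not to $\beta d<b-a=\tau+d$, and the latter is too weak. For $a=3$, $b=10$, $\tau=5$ ($d=2$, $\hbeta=3$), one has $\hbeta d=6<7=b-a$, yet $P_3-\tau-1=27=R_3$. Indeed $P_{\hbeta}=33\in\rp$ by Lemma~\ref{lem: outcome ts in alpha block}.

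So your closing claim that every check reduces to $\beta d<b-a$, ``that is, $\beta<\hbeta$'', is false. Used literally, it would prove a false statement at $\beta=\hbeta$ whenever $d$ does not divide $b-a$. What you need is $\beta\le\hbeta-1=\lfloor\tau/d\rfloor$ (since $(b-a)/d=1+\tau/d$), hence $\beta d\le\tau$. This is exactly the paper's bound $\beta\le\hbeta-1\le\tau/(b-a-\tau)$.

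Two minor slips:
\begin{itemize}
\item Left's window from $P_\beta$ lies in $[N_\beta,P_\beta)$, not $[R_\beta,P_\beta)$. With $a=3$, $b=7$, $\tau=2$, from $P_1=8$ Left reaches the $\np$-position $5$. This is harmless, because $\np$- and $\rp$-positions are both Right wins with Right to move.
\item $P_\beta-R_\beta=\beta d$, not $\beta d-1$.
\end{itemize}
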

\begin{proof}
    If $2\tau<b-a$, then $\hbeta=1$, and the statements holds vacuously. So, assume $2\tau\ge b-a$. We must prove that
    \begin{itemize}
        \item[(i)] Left wins $G$ if $L_\beta\le n< N_\beta$,
        \item[(ii)] First player wins $G$ if $N_\beta\le n< R_\beta$,
        \item[(iii)] Right wins $G$ if $R_\beta\le n< P_\beta$,
        \item[(iv)] Second player wins $G$ if $n=P_\beta$.
    \end{itemize}

    We first consider the case $\beta=1$. If $L_1\le n<P_1$, the statement holds using Lemma~\ref{lem: outcome TS heap<=b}. So let $n=P_1=b+1$: If Left starts, then any resulting position is either $\np$ or $\rp$ as $n-a>\tau+1=N_1$ and $n-1<P_1$. Thus, Right wins $G$. If Right starts, any resulting position is either $\lp$ or $\np$ as $n-b=1\ge L_1$ and $n-(\tau+1)\le a+\tau<R_1$. Thus, Left wins $G$. This completes the proof for $\beta=1$. 
    
    Now suppose $\beta>1$, i.e, $n\ge b+2$.

    
    Consider item~(i).  Suppose first that Right starts in $G$ and moves to $G^R$. Denote the heap size of $G^R$ by $n_r$. Since $L_\beta\le n< N_\beta$, $$L_\beta-b\le n_r<N_\beta-(\tau+1).$$ By simple calculations, we get $L_\beta-b>L_{\beta-1}$ and $N_\beta-(\tau+1)=R_{\beta-1}$. Thus, $$L_{\beta-1}<n_r< R_{\beta-1}.$$ Hence, by induction, the outcome of $G^R$ is either $\lp$ or $\np$, and thus, Left wins playing first on $G^R$. Since $G^R$ is an arbitrary Right option, Left wins $G$. 
    
    Next suppose that Left starts in $G$. We claim that Left wins by removing 1 token. Let $G^L$ denote the Left option $\ts_\tau(n-1;a,b)$. If $n=L_\beta$, i.e., $n-1=P_{\beta-1}$, then outcome of $G^L$ is $\pp$ by induction and hence, Left wins $G$. Otherwise $L_\beta\le n-1< N_\beta-1$, and in this range, we already proved that Right loses as first player. Thus, Left wins $G$ by moving to $\ts_\tau(n-1;a,b)$.
    
    Consider item~(ii). First suppose that Left starts on $G$. Since the outcome of {\sc TS} is $\lp$ below the heap size $N_\beta$ by item~(i), Left wins if she can reach the heap size $N_\beta-1$. For this purpose, she needs to remove $n-N_\beta+1$ tokens 
    and this is possible as $$1\le n-N_\beta+1< R_\beta-N_\beta+1=a+1.$$ Therefore, Left wins $G$ by doing so. 
    
    Next suppose that Right starts on $G$. 
    Right can win by reducing the heap size to $R_{\beta-1}$ by induction. For this purpose, Right needs to remove $n-R_{\beta-1}$ many tokens. Since $$\tau+1=N_\beta-R_{\beta-1}\le n-R_{\beta-1}<R_\beta-R_{\beta-1}=a+\tau+1\le b,$$ Right does so and wins $G$. 

    Consider item~(iii). Suppose Right starts on $G$. 
    By induction, Right wins if he can reach any heap size between $R_{\beta-1}$ and $P_{\beta-1}$, both included. We know \begin{align*}
        R_{\beta-1}&=R_\beta-(a+\tau+1),\\
        P_{\beta-1}&=P_\beta-(b+1).
    \end{align*}This implies $R_{\beta-1}+a+\tau+1\le n\le P_{\beta-1}+b$. Since all numbers from $a+\tau+1$ to $b$ are in the subtraction set of Right, he can always reach a heap size between $R_{\beta-1}$ and $P_{\beta-1}$, both included and thus, Right wins. 
    
    Now suppose Left starts on $G$ and her move leads to heap size $n_\ell$. Then $$N_\beta=R_\beta-a\le n_\ell< P_\beta-1.$$ We already proved that Right wins as first player on any of these heap sizes. Since, $n_\ell$ is arbitrary, Right wins $G$. 

    Consider item~(iv). Suppose Left starts on $G$ and move to heap size $n_\ell$. Then, $$N_\beta\le P_\beta-a\le n_\ell\le P_\beta-1.$$ We already proved that Right wins as first player on any of these heap sizes. Since $n_\ell$ is arbitrary, Right wins $G$. 
    
    Now suppose Right starts on $G$ and move to heap size $n_r$. Then, \begin{align}
        L_\beta\le P_\beta-b\le n_r&\le P_\beta-(\tau+1)\nonumber\\
        &=\beta(b+1)-(\tau+1)-R_\beta+R_\beta\nonumber\\
        &=\beta(b-a-\tau)-(\tau+1)+R_\beta\nonumber\\
        &<\frac{\tau+1}{b-a-\tau}(b-a-\tau)-(\tau+1)+R_\beta\label{eq:1}\\
        &=R_\beta\nonumber.
    \end{align}
    Equation~\eqref{eq:1} follows as $\beta\le \hbeta-1\le \frac{\tau}{b-a-\tau}<\frac{\tau+1}{b-a-\tau}$. We already proved that Left wins as first player on any of these heap sizes. Since, $n_r$ is arbitrary, Left wins $G$. 
\end{proof}

We now establish the outcome pattern of the $\widehat \beta$-th block. Similar to $L_\beta$, $N_\beta$, and $R_\beta$ defined in Equations~\eqref{eq:L-beta}, \eqref{eq:N-beta} and \eqref{eq:R-beta} for $\beta<\hbeta$, we define $L_{\hbeta}$, $N_{\hbeta}$, and $R_{\hbeta}$ for the $\hbeta$-th block as \begin{align}
    L_{\hbeta}&\coloneq(\hbeta-1)(b+1)+1,\\
    N_{\hbeta}&\coloneq\hbeta(a+\tau+1)-a,\\
    R_{\hbeta}&\coloneq\hbeta(a+\tau+1).
\end{align} In the next lemma, we will see that $L_{\hbeta}$, $N_{\hbeta}$, and $R_{\hbeta}$ are the heap sizes corresponding to the first $\lp$-, $\np$-, and $\rp$-position in the $\hbeta$-th block, respectively. We will also see that there is no $\pp$-position in this block.


\begin{lemma}\label{lem: outcome ts in alpha block}
    Let $a,b$ and $\tau$ are positive integers such that $a<b$ and $\tau<b-a$. 
    Then \begin{align}
        o\big(\ts_\tau(n;a,b)\big)=\begin{cases}
            \lp & \text{ if } L_{\hbeta} \le  n<N_{\hbeta} ;\\
            \np & \text{ if } N_{\hbeta}\le  n< R_{\hbeta};\\
            \rp & \text{ if } R_{\hbeta}\le  n\le {\hbeta}(b+1).
        \end{cases}
    \end{align}
\end{lemma}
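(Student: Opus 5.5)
The plan is to mirror the proof of Lemma~\ref{lem: outcome ts below alpha block}, now for the block $\beta=\hbeta$. The three cases (i) $L_{\hbeta}\le n<N_{\hbeta}$, (ii) $N_{\hbeta}\le n<R_{\hbeta}$, (iii) $R_{\hbeta}\le n\le \hbeta(b+1)$ are proved in increasing order of $n$. The outcomes of all earlier blocks are available from Lemma~\ref{lem: outcome ts below alpha block} when $\hbeta\ge 2$, and from Lemma~\ref{lem: outcome TS heap<=b} when $\hbeta=1$.

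First I dispose of $\hbeta=1$, which happens exactly when $2\tau<b-a$. Then $L_1=1$, $N_1=\tau+1$, $R_1=a+\tau+1\le b$, and the first two cases together with $R_1\le n\le b$ are exactly Lemma~\ref{lem: outcome TS heap<=b}. The only new heap size is $n=b+1$, which I must show is $\rp$.
\begin{itemize}
\item If Right starts, he removes $b-a-\tau$ tokens, reaching $a+\tau+1=R_1$, which is $\rp$. This is legal because $b-a-\tau\ge\tau+1$ (from $2\tau<b-a$) and $b-a-\tau\le b$.
\item If Left starts, she reaches a heap in $[b+1-a,b]$. Since $b+1-a>\tau+1+\dots$, more precisely $b+1-a\ge 2\tau+2>\tau+1$, every such heap is $\np$ or $\rp$, so Right wins.
\end{itemize}

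For $\hbeta\ge2$, the arguments for items (i) and (ii) transfer verbatim from Lemma~\ref{lem: outcome ts below alpha block}.
\begin{itemize}
\item In item (i), Right's moves land in $[L_{\hbeta}-b,\;N_{\hbeta}-\tau-1)\subseteq(L_{\hbeta-1},R_{\hbeta-1})$, where the previous block gives $\lp\cup\np$. Left instead removes one token, landing on $P_{\hbeta-1}$ (a $\pp$-position) or inside the current $\lp$ segment.
\item In item (ii), Left moves down to $N_{\hbeta}-1$, which is $\lp$, and Right moves to $R_{\hbeta-1}$.
\end{itemize}
I also need the segments to be nonempty and correctly ordered, i.e.\ $L_{\hbeta}\le N_{\hbeta}$ and $R_{\hbeta}\le\hbeta(b+1)$. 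Both are simple inequalities from the definition of $\hbeta$: for example, $\hbeta(b-a-\tau)\le b-a$ gives $N_{\hbeta}-L_{\hbeta}\ge 0$ after rearranging.

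The main obstacle is item (iii), proved by induction on $n$ within $[R_{\hbeta},\hbeta(b+1)]$. Unlike the earlier blocks, the top of this block is $\rp$ rather than $\pp$. That is the point of the definition of $\hbeta$: $\hbeta+1>\frac{b-a}{b-a-\tau}$, equivalently $\hbeta(b-a-\tau)>\tau$, so $P_{\hbeta}-(\tau+1)\ge R_{\hbeta}$. This is exactly the reverse of inequality~\eqref{eq:1}.

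\emph{Right starting.} I aim for target heaps that are already known to be $\rp$. Two candidate families are available.
\begin{itemize}
\item Heaps in $[R_{\hbeta-1},P_{\hbeta-1}-1]$, reached by removing between $a+\tau+1$ and $b$ tokens, as in Lemma~\ref{lem: outcome ts below alpha block}. I would avoid landing on $P_{\hbeta-1}$ itself, since a $\pp$-position is also a Right win as second player, so landing there is in fact fine.
\item Heaps in the current $\rp$ segment $[R_{\hbeta},n-\tau-1]$, available when $n\ge R_{\hbeta}+\tau+1$ and handled by the induction within the block.
\end{itemize}
The first family covers $n\le P_{\hbeta-1}+b$, which includes the whole block. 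So Right always has a winning move into $[R_{\hbeta-1},P_{\hbeta-1}]$.

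\emph{Left starting.} Her moves from $n$ land in $[n-a,n-1]$. I must check that all of these are $\np$ or $\rp$, i.e.\ that $n-a\ge N_{\hbeta}$. This holds because $n\ge R_{\hbeta}=N_{\hbeta}+a$. Each landing heap is then $\np$ by item (ii) or $\rp$ by induction, and Right wins moving first from it.

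The delicate point is $n=\hbeta(b+1)$ itself. Here Right's move to $R_{\hbeta-1}$ requires removing at most $b$ tokens, that is, $P_{\hbeta}-R_{\hbeta-1}\le b$. I would verify this against the inequality $\hbeta(b-a-\tau)>\tau$, and if it fails, use instead the in-block move to $n-(\tau+1)\ge R_{\hbeta}$ guaranteed by that same inequality.
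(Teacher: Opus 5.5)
Your proposal is correct and is essentially the paper's proof. Items (i)--(ii) are carried over from Lemma~\ref{lem: outcome ts below alpha block}, Left's moves from the $\rp$-segment land in $[N_{\hbeta},n-1]$, and the top heap $\hbeta(b+1)$ is handled via $\hbeta(b-a-\tau)>\tau$, with Right moving into the block's own $\rp$-segment. The paper removes $\hbeta(b-a-\tau)$ tokens to land on $R_{\hbeta}$, while you remove $\tau+1$; both work.

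One correction: your first family does not cover the whole block. Since $P_{\hbeta-1}+b=\hbeta(b+1)-1$, every target in $[R_{\hbeta-1},P_{\hbeta-1}]$ is at least $b+1$ away from $n=\hbeta(b+1)$; in particular $\hbeta(b+1)-R_{\hbeta-1}=\hbeta(b-a-\tau)+a+\tau+1>b$ always. So your in-block move to $n-(\tau+1)$ is not a contingency but the actual argument at the top.
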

\begin{proof}
    It suffices to show the following:
    \begin{enumerate}
        \item Left wins $G$ if $L_{\hbeta}\le n<N_{\hbeta}$,
        \item First players wins $G$ if $N_{\hbeta}\le n<R_{\hbeta}$,
        \item Right wins $G$ if $R_{\hbeta}\le n< {\hbeta}(b+1)$,
        \item Right wins $G$ if $n={\hbeta}(b+1)$.
    \end{enumerate}
    We skip the proof of items~(1), (2) and (3), as they are similar to the proof of items~(i), (ii) and (iii) of Lemma~\ref{lem: outcome ts below alpha block}.  

    Let $n=\hbeta(b+1)$.  Suppose Right starts on $G$, then he wins by removing $n-R_{\hbeta}$ tokens from the heap as it leads to $\ts_\tau(R_{\hbeta};a,b)$, which is an $\rp$-position by item~(3); this is a legal move as $n-R_{\hbeta}<b$ and
\begin{align}
    n-R_{\hbeta}&={\hbeta}(b+1)-{\hbeta}(a+\tau+1)\nonumber\\
    &={\hbeta}(b-a-\tau)\nonumber\\
    &=\left\lfloor\frac{b-a}{b-a-\tau}\right\rfloor \cdot (b-a-\tau)\nonumber\\
    &>\left(\frac{b-a}{b-a-\tau}-1\right) \cdot (b-a-\tau) =\tau.\label{eq:2}
\end{align}
Now suppose that Left starts on $G$ and move to heap size $n_\ell$. Then $$n-a\le n_\ell<n,$$
where \begin{align*}n-a  &=  n-(R_{\hbeta}-N_{\hbeta}) \\
&={\hbeta}(b+1)-R_{\hbeta} +N_{\hbeta} \\
&> N_{\hbeta}.\end{align*} By items~(2) and (3), the outcome of $\ts_\tau(n_\ell;a,b)$ is either $\np$ or $\rp$, and hence, Right wins $G$.  This completes the proof.
\end{proof}

We have established the pattern of the outcomes of {\sc TS} up to the heap size $\hbeta(b+1)$; we now prove that all the remaining heap sizes are $\rp$-positions.

\begin{lemma}\label{lem: right dominating}
     Consider $G=\ts_\tau(n;a,b)$ where $a<b$ and $\tau<b-a$. Then  $G\in \rp$ for all $n\ge R_{\hbeta}$.
\end{lemma}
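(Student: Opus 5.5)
We argue by strong induction on $n$, using Lemma~\ref{lem: outcome ts in alpha block} as the base case. That lemma already gives $G\in\rp$ for $R_{\hbeta}\le n\le \hbeta(b+1)$. So it remains to treat $n\ge \hbeta(b+1)+1$, assuming the claim for all heap sizes $m$ with $R_{\hbeta}\le m<n$. We must show two things: Right wins moving first, and Right wins moving second.

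\emph{Right moving second.} Left moves to some $n_\ell$ with $n-a\le n_\ell\le n-1$. We show $n_\ell\ge R_{\hbeta}$, so that $\ts_\tau(n_\ell;a,b)\in\rp$ by induction and Right wins as the next player. From \eqref{eq:2} we have $\hbeta(b+1)-R_{\hbeta}=\hbeta(b-a-\tau)>\tau\ge 1$. Moreover $\hbeta\ge1$ and $b-a-\tau\ge1$. Hence
\[
n_\ell\ge \hbeta(b+1)+1-a=R_{\hbeta}+\hbeta(b-a-\tau)+1-a .
\]
This bound only gives $n_\ell\ge R_{\hbeta}$ when $\hbeta(b-a-\tau)\ge a-1$, which need not hold. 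So the naive bound is insufficient, and we instead use the finer outcome information from Lemma~\ref{lem: outcome ts in alpha block}. The heap sizes in $[N_{\hbeta},R_{\hbeta})$ are $\np$, so Right, moving first from there, still wins. Thus it suffices to show $n_\ell\ge N_{\hbeta}=R_{\hbeta}-a$. This follows directly from $n-a\ge \hbeta(b+1)+1-a>R_{\hbeta}-a$. In every case $n_\ell$ lies in an $\np$- or $\rp$-region, and Right wins.

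\emph{Right moving first.} Right needs a legal move, that is, a removal $s\in\{\tau+1,\dots,b\}$, to a heap in $[R_{\hbeta},n-1]$, which is $\rp$ by Lemma~\ref{lem: outcome ts in alpha block} or by induction. Following the idea of item~(iii) of Lemma~\ref{lem: outcome ts below alpha block}, the target interval has length large enough. The reachable heaps are exactly $[n-b,\,n-\tau-1]$. We need $n-\tau-1\ge R_{\hbeta}$, which holds because $n>\hbeta(b+1)\ge R_{\hbeta}+\tau+1$ by \eqref{eq:2}. If $n-b\ge R_{\hbeta}$, any move works. Otherwise, the move to $\max(n-b,R_{\hbeta})$ lands in $[R_{\hbeta},n-\tau-1]$, so it is a legal removal between $\tau+1$ and $b$. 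Either way Right reaches an $\rp$-position and wins.

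\emph{Main obstacle.} The delicate step is the Left-first case near the lower end, just above $\hbeta(b+1)$. A Left removal of up to $a$ tokens can drop below $R_{\hbeta}$, so we need the $\np$-segment $[N_{\hbeta},R_{\hbeta})$ of Lemma~\ref{lem: outcome ts in alpha block} to absorb such moves. We must also check that no Left move reaches the $\lp$-segment below $N_{\hbeta}$. This is exactly the inequality $n-a>N_{\hbeta}$, which is the same computation used at the end of the proof of Lemma~\ref{lem: outcome ts in alpha block}.
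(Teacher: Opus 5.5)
Your proof is correct and follows essentially the same route as the paper. You use strong induction with Lemma~\ref{lem: outcome ts in alpha block} as the base. Left's move lands at or above $N_{\hbeta}$, since $n-a>\hbeta(b+1)-a>N_{\hbeta}$, so it is in an $\np$ or $\rp$ region; and Right has a legal move into $[R_{\hbeta},n-1]$. The only cosmetic difference is that the paper has Right always remove the fixed amount $\hbeta(b+1)-R_{\hbeta}=\hbeta(b-a-\tau)\in(\tau,b)$, whereas you move to $\max(n-b,R_{\hbeta})$; both are valid.
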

\begin{proof}
    We prove the statement by induction. 
    The statement is true for $R_{\hbeta}\le n\le \hbeta(b+1)$ by Lemma~\ref{lem: outcome ts in alpha block}. So suppose $n>\hbeta(b+1)$. 
    
    Suppose first that Left starts on $G$ and move to heap size $n_\ell$. Then $$n_\ell>\hbeta(b+1)-a=\hbeta(b+1)-(R_{\hbeta}-N_{\hbeta})>N_{\hbeta}.$$ Therefore, by Lemma~\ref{lem: outcome ts in alpha block} and induction, the outcome of $\ts_\tau(n_\ell;a,b)$ is either $\np$ or $\rp$. Hence, Right wins this position as the next player. Since, $n_\ell$ is arbitrary, Right wins $G$. 
    
    Now suppose that Right starts on $G$ and removes $\hbeta(b+1)-R_{\hbeta}$ tokens. This is legal as $$\tau<\hbeta(b+1)-R_{\hbeta}<b,$$ by Equation~\eqref{eq:2}. 
    Since $n>\hbeta(b+1)$, the resulting heap size is greater than $R_{\hbeta}$, and by induction, this position has outcome $\rp$. Hence, Right wins $G$. 
\end{proof}

We have find the outcomes of $\ts_\tau(n;a,b)$ for all small truncation levels ($\tau<b-a$) and proved that Right wins $\ts_\tau(n;a,b)$ for all sufficiently large $n$. However, this information is not enough to play strategically in a disjunctive sum of games. Additional information like atomic weight is required. In the next section, We compute the atomic weight of the {\sc TS} ruleset. 

\section{Atomic weights of Truncated Support}\label{sec: AW of TS} 
Table~\ref{tab:aw of ts(n,3,7)} suggests a pattern in the atomic weights of $\ts_\tau(n;3,7)$. For $\tau<b-a$, the pattern begins exactly at heap size 
$R_{\hbeta}=\hbeta(a+\tau+1)$, where $\hbeta=\left\lfloor\frac{b-a}{b-a-\tau}\right\rfloor$. By Theorem~\ref{thm: outcomes of TS}, starting from $R_{\hbeta}$, all larger heap sizes have outcome $\rp$. So, Right can delay playing as long as the heap size remains larger or equal to $R_{\hbeta}$.  Left can remove at most $a$ tokens per move, and thus the minimum number of moves required for Left to reduce the heap size below $R_{\hbeta}$ is: 
\[ 
\left\lceil\frac{n-(R_{\hbeta}-1)}{a}\right\rceil.
\] 
Thus, Right can delay his move exactly $\left\lceil\frac{n-(R_{\hbeta}\,-1)}{a}\right\rceil-1=\left\lfloor\frac{n-R_{\hbeta}}{a}\right\rfloor$ turns, suggesting that the atomic weight is
\[
-\left\lfloor\frac{n-R_{\hbeta}}{a}\right\rfloor.
\]

Note the similarity of these expressions to those for {\sc Full Support}, Section~\ref{sec: AW of FS}.\footnote{In {\sc Full Support}, starting from heap size $a+1$, all larger heap sizes have outcome $\rp$. In that sense, $R_{\hbeta}=a+1$ in {\sc FS}. However, there is a difference of one in the atomic weight formula for {\sc FS} and {\sc TS}.} 
The next result will be used to prove this formula of atomic weight of {\sc TS} when $\tau<(b-a)/2$. We conjecture that the same formula holds whenever $(b-a)/2\le \tau<b-a$. Note that if $\tau<(b-a)/2$, then $$R_{\hbeta}=a+\tau+1.$$ 

Recall that far star, $\cgfarstar$, is a nimber of birthday larger than the birthday of the game added to it. 
We use basic CGT results, like the bijection of partial order with outcome classes (Theorem~\ref{thm: second fundamental theorem}), $\cgup>0$ and $\cgup\cgfarstar>0$, without further mention. Another convenient tool is Lemma~\ref{lem: n-cgup-cgfarstar}, which concerns the canonical forms  
\begin{align}\label{eq:cfnupfarstar}
n\cdot\cgup\;\cgfarstar=\{0\mid (n-1)\cdot\cgup\;\cgfarstar\} \text{ and } n\cdot\cgdown\;\cgfarstar=\{(n-1)\cdot\cgdown\;\cgfarstar\mid0\}.
\end{align}

\begin{lemma}\label{lem: help to shorten main theorem}
    Let $0<\tau<(b-a)/2$ and $\tau+1\le n\le \tau+a$. Then $\ts_\tau(n;a,b)+\cgup\cgfarstar\in \lp$.
\end{lemma}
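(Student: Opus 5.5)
The plan is a direct case analysis: show that Left wins $\ts_\tau(n;a,b)+\cgup\cgfarstar$ both as first and as second player. I would use only Observation~\ref{obs: TS=n if n<=tau}, which says a heap of size at most $\tau$ is the integer equal to its size, and the canonical form $\cgup\cgfarstar=\{0\mid \cgfarstar\}$ from \eqref{eq:cfnupfarstar} with $n=1$. Write $G=\ts_\tau(n;a,b)$. The hypothesis $\tau<(b-a)/2$ gives $n\le \tau+a<b$, so every Right move in $G$ is legal up to the whole heap. The fact used repeatedly is that a positive integer plus an infinitesimal (here $\cgup\cgfarstar$ or $\cgfarstar$) is positive.

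\emph{Left starts.} Left removes $n-\tau$ tokens from $G$. This is legal since $1\le n-\tau\le a$. The resulting position is $\ts_\tau(\tau;a,b)+\cgup\cgfarstar=\tau+\cgup\cgfarstar$. It is strictly positive because $\tau\ge 1$ and $\cgup\cgfarstar$ is infinitesimal, so Left wins as second player from there.

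\emph{Right starts.} There are two kinds of move.
\begin{enumerate}
\item Right plays in $\cgup\cgfarstar$, moving to $\cgfarstar$. Left answers by again moving $G$ to a heap of size $\tau$. This leaves $\tau+\cgfarstar$, a positive integer plus a nimber, which is positive, so Left wins.
\item Right plays in $G$, removing $j\in\{\tau+1,\dots,b\}$ tokens to reach heap size $m=n-j$ with $0\le m\le n-\tau-1\le a-1$. There are three subcases:
\begin{itemize}
\item If $m=0$, the position is $\cgup\cgfarstar>0$, and Left wins moving first.
\item If $1\le m\le\tau$, the position is $m+\cgup\cgfarstar>0$.
\item If $m\ge\tau+1$, then $\tau+1\le m\le \tau+a$. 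Left is to move in $\ts_\tau(m;a,b)+\cgup\cgfarstar$, and the Left-starts argument above applies verbatim: she moves the heap to $\tau$.
\end{itemize}
\end{enumerate}
Hence $G+\cgup\cgfarstar\in\lp$.

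The only delicate point is the appeal to canonical forms of $\cgup\cgfarstar$ and $\cgfarstar$: Right's alternative moves there are dominated or reversible. Replacing these components by their canonical forms is justified because equal games can be substituted in a sum without changing the outcome. I expect no real obstacle beyond verifying the legality bounds $n-\tau\le a$ and $n\le b$.
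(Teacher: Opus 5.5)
Your proof is correct and takes essentially the same route as the paper's: Left reduces the heap to $\tau$, leaving a positive integer plus an infinitesimal, and any Right move on the heap leaves fewer than $a$ tokens. The only cosmetic difference is that when Right leaves a nonempty heap, the paper has Left remove all of it (leaving $\cgup\cgfarstar>0$ with Right to move), whereas you split into the cases $m\le\tau$ and $m\ge\tau+1$.
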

\begin{proof}
    If Left starts, she removes $n-\tau$ tokens from the heap; this is legal as $1\le n-\tau\le a$. This results in the position  $\ts_\tau(\tau;a,b)+\cgup\cgfarstar$, which is equal to $\tau+\cgup\cgfarstar$ by Observation~\ref{obs: TS=n if n<=tau}. Since $\cgup\cgfarstar>0$ and $\tau>0$, we have $\tau+\cgup\cgfarstar>0$ and hence, Left wins. 

    If Right starts and plays on the heap, he can remove anything from $\tau+1$ to $n$ tokens as $n\le a+\tau< b$. If he removes the full heap, the game becomes $\cgup\cgfarstar$, which is positive, and hence Left wins. Otherwise, the heap size remains positive and less than $a$, because the minimum Right could remove was $\tau+1$ from $n$ and $n-\tau-1< a$. Therefore, Left can remove the full heap in the next turn and win as $\cgup\cgfarstar>0$. 

    If Right instead plays on the  $\cgup\cgfarstar$ component, then Left can remove $n-\tau$ tokens from the heap. This results in the positions $\ts_\tau(\tau;a,b)+\cgfarstar$ using Equation~\eqref{eq:cfnupfarstar}, which is strictly positive as $\cgfarstar$ is an infinitesimal and $\ts_\tau(\tau;a,b)=\tau>0$ by Observation~\ref{obs: TS=n if n<=tau}. Hence, Left wins. 
\end{proof}

We now prove the fourth main result of this paper, Theorem~\ref{thm: AW of TS}, which states that given \begin{align}
1\le \tau&<(b-a)/2, \text{ and } \label{eq:tauab2} \\
n&\ge a+\tau+1, \label{eq:n>=a+tau+1}
\end{align} the atomic weight of $G(n)=\ts_\tau(n;a,b)$ is 
\begin{align*}
    \aw(G(n)) =-\left\lfloor \frac{n-(a+\tau+1)}{a}\right\rfloor.
\end{align*}

\begin{proof}[Proof of Theorem~\ref{thm: AW of TS}]
   For all heap sizes $n$, let $$G(n)=\ts_\tau(n;a,b)\;\; \text{ and } \;\;w(n)=\left\lfloor \frac{n-(a+\tau+1)}{a}\right\rfloor.$$ 
   Observe that 
   \begin{align}
        w(n)&=w(n-a)+1,\label{eq: w(n)=w(n-a)+1}\;\;\\
        w(n)&\ge w(n-j) \text { for all } j>0\label{eq: w(n)>w(n-j)},\text{ and }\\
        w(n)&=k \text{ for all } (k+1)a+\tau+1\le n\le (k+2)a+\tau. \label{eq: compute w(n)}
    \end{align} 
    By Theorem~\ref{thm: check farstareq} and Definition~\ref{def: atomic weight}, it suffices to prove that, given the provisos~\eqref{eq:tauab2} and \eqref{eq:n>=a+tau+1},
    \begin{align}    
        \cgdown\cgfarstar<G(n)+w(n)\cdot\cgup<\cgup\cgfarstar.
    \end{align}

    That is, we prove that 
    \begin{enumerate}
        \item Left wins $H=G(n)+(w(n)+1)\cdot\cgup\;\cgfarstar$, and
        \item Right wins $J=G(n)+w(n-a)\cdot\cgup\;\cgfarstar$,
    \end{enumerate}
    where (2) also uses \eqref{eq: w(n)=w(n-a)+1}. 
    

    Intuitively, since $G(n)\in \rp$ for all $n\ge a+\tau+1$, and Right should reduce Left's advantage in $H$ and $J$ by playing on $\cgup$, whereas, Left should play in $G(n)$ to reduce the heap size by as much as possible.\\ 
    
    \noindent{\bf Case (1).} Suppose first that Left starts in $H$. She removes $a$ tokens from $G(n)$. This results in the position $$H^L=G(n-a)+(w(n)+1)\cdot\cgup\;\cgfarstar.$$ If $n-a\ge a+\tau+1$, then by induction, $\aw(G(n-a))=-w(n-a)$ and hence, Left wins by the two ahead rule (Theorem~\ref{thm: outcome using aw}) using Equation~\eqref{eq: w(n)=w(n-a)+1}. 
    Otherwise, $\tau+1\le n-a\le \tau+a$ by Equation~\eqref{eq:n>=a+tau+1}, and thus, $w(n)=0$. Hence, Left wins $H^L$ by Lemma~\ref{lem: help to shorten main theorem}.
    
    Suppose next that Right starts in $H$. If he plays in $G(n)$, by removing $\tau+1\le j\le b$ tokens, then the resulting position is 
    \begin{align}\label{eq:HR}
        H^{R_1}=G(n-j)+(w(n)+1)\cdot\cgup\;\cgfarstar.
    \end{align} For large $n$, Left is at least one move ahead in $H^{R_1}$ by induction and she is the next player, so she wins. Specifically,
    if $n-j\ge a+\tau+1$, then by induction, $\aw(G(n-j))=-w(n-j)$. As atomic weight is additive by Lemma~\ref{lem: aw of sum of games}, \begin{align*}
        \aw(H^{R_1})&=-w(n-j)+\aw\left((w(n)+1)\cdot\cgup\right)+\aw(\cgfarstar)\\
        &=-w(n-j)+w(n)+1+0\\
        &\ge 1,
    \end{align*} where the second equality holds as $\aw(k\cdot\cgup)=k$ for all integers $k$ and the inequality holds as $w(n)\ge w(n-j)$ by \eqref{eq: w(n)>w(n-j)}. As Left is the next player on $H^{R_1}$ and $\aw(H^{R_1})\ge 1$, she wins by Theorem~\ref{thm: outcome using aw}.
    For small $n$, we split the analysis into two cases:
    \begin{enumerate}
        \item If $\tau+1\le n-j\le a+\tau$, then by Lemma~\ref{lem: help to shorten main theorem}, $G(n-j)+\cgup\cgfarstar>0$. Since $w(n)\cdot \cgup\ge 0$ for all $n$, Left wins the sum \eqref{eq:HR}.
        \item If $n-j\le \tau$, then Right do not have any move on $G(n-j)$ and thus $G(n-j)=n-j\ge 0($ by Observation~\ref{obs: TS=n if n<=tau}). Since $w(n)\cdot\cgup\ge 0$ and $\cgup\;\cgfarstar>0$, Left wins the sum \eqref{eq:HR}.
    \end{enumerate}
    This completes the case when Right starts on $H$ by playing on $G(n)$. Next we consider the case when Right starts on $H$ by moving on $(w(n)+1)\cdot\cgup\;\cgfarstar$. This move results in $$H^{R_2}=G(n)+w(n)\cdot\cgup\;\cgfarstar$$ as $k\cdot\cgup\cgfarstar=\{0\mid (k-1)\cdot\cgup\;\cgfarstar\}$ by \eqref{eq:cfnupfarstar}. By Equation~\eqref{eq: w(n)=w(n-a)+1}, $w(n)=w(n-a)+1$ and consequently, $H^{R_2}=G(n)+(w(n-a)+1)\cdot\cgup\;\cgfarstar$.
    Left responds by removing $a$ tokens from $G(n)$. This results in the position $$H^{R_2L}= G(n-a)+(w(n-a)+1)\cdot\cgup\;\cgfarstar.$$ If $n-a\ge a+\tau+1$, Left wins $H^{R_2L}$ by induction. Otherwise, $\tau+1\le n-a\le a+\tau$ as $n\ge a+\tau+1$ by Equation~\eqref{eq:n>=a+tau+1}, and thus, $w(n-a)=-1$. Hence, $H^{R_2L}=G(n-a)+\cgfarstar$. In the next turn, if Right moves on $G(n-a)$ and do not empty the heap, then:
    \begin{itemize}
        \item[(a)] if the resultant heap is smaller than $\tau+1$, then Left moves from $\cgfarstar$ to 0 and wins as Right does not have any further move,
        \item[(b)] otherwise, Left wins by reducing the heap below $\tau+1$. She can do this as Right's previous move reduced the heap below $a$. 
    \end{itemize}
    If Right empties the heap in the previous move, then Left wins by moving from $\cgfarstar$ to 0. If Right moves on $\cgfarstar$ in $H^{R_2L}$, then Left reduces the heap to $\tau$ and wins as the resulting game is positive. 
    
    \noindent{\bf Case (2).} We must show that Right wins $J=G(n)+w(n-a)\cdot\cgup\;\cgfarstar$. Suppose first that Left starts and she plays on $G(n)$, by removing $1\le i\le a$ tokens, then the resulting position is 
    \begin{align}\label{eq:JL}
        J^L=G(n-i)+w(n-a)\cdot\cgup\;\cgfarstar. 
    \end{align}
    Intuitively, by this move, Left reduces Right's delay advantage, so in the next turn, Right also reduces Left delay advantage, if any. 
    We split the analysis of Right's winning strategies in the case of \eqref{eq:JL} into three cases:
    \begin{enumerate}
        \item If $n\ge 3a+\tau+1$, then $w(n-a)\ge 1$ and Right responds by moving on $w(n-a)\cdot\cgup\;\cgfarstar$ in \eqref{eq:JL}. This results in the position $$J^{LR}=G(n-i)+(w(n-a)-1)\cdot\cgup\;\cgfarstar,$$ by Equation~\eqref{eq:cfnupfarstar}. Since $w(n-a)-1\le w(n-i-a)$ by Equations~\eqref{eq: w(n)>w(n-j)} and \eqref{eq: w(n)=w(n-a)+1}, we have $$J^{LR}\le G(n-i)+w(n-i-a)\cdot\cgup\;\cgfarstar .$$ Because Right wins the lates by induction, he also wins $J^{LR}$.
        \item If $a+\tau+i<n\le 3a+\tau$, then $w(n-a)$ is either $-1$ or $0$. In both case, Right has a move from $w(n-a)\cdot\cgup\;\cgfarstar$ to $0$ in $J^L$ as $\cgdown\cgfarstar=\{\cgfarstar\mid 0\}$ by \eqref{eq:cfnupfarstar}. This results in $G(n-i)$, which is an $\rp$-position by Theorem~\ref{thm: outcomes of TS}(\ref{item:thm: outcomes of TS:3}) as $n-i\ge a+\tau+1$. Hence, Right wins.
        \item If $n\le a+\tau+i$, then $n\le 2a+\tau$,  and $w(n-a)=-1$. Right responds by removing the full heap; this is legal as $\tau+1\le a+\tau+1-i\le n-i\le a+\tau<b$, by \eqref{eq:tauab2}. This results in $0+\cgdown\cgfarstar$, an $\rp$-position. Hence, Right wins.
    \end{enumerate} 
    This completes the case when Left starts by playing on $G(n)$ in $J$. Now suppose that Left instead plays on the $w(n-a)\cdot\cgup\;\cgfarstar$ component in $J$, then again Right's response depends on the heap size $n$: 
    \begin{enumerate}[(a)]
        \item If $n\ge 3a+\tau+1$, then $w(n-a)\ge 1$ and Left's move results in $G(n)+0$, which is an $\rp$-position. Hence, Right wins. 
        \item\label{item:left play on up:2} If $2a+\tau+1\le n\le 3a+\tau$, then $w(n-a)=0$, and Left's move results in $G(n)+*m$ for some $m\ge 0$. Since $G(n)\in \rp$, if $m=0$, Right wins the current position by Theorem~\ref{thm: outcomes of TS}(\ref{item:thm: outcomes of TS:3}) and otherwise, Right wins by moving from $*m$ to $0$ using the same theorem.
        \item\label{item:left play on up:1} If $a+\tau+1\le n\le 2a+\tau$, then $w(n-a)=-1$ and therefore her move results in $G(n)+\cgfarstar$ by Equation~\eqref{eq:cfnupfarstar}. Right responds by moving from $\cgfarstar$ to $0$ and wins as $G(n)\in \rp$ by Theorem~\ref{thm: outcomes of TS}(\ref{item:thm: outcomes of TS:3}).
    \end{enumerate}
    
    We proved that Left loses as first player on $J$. Suppose next that Right starts on $J$. He reduces Left's advantage by playing on $w(n-a)\cdot\cgup\;\cgfarstar$. The analysis of the resulting position depends on $n$:
    \begin{enumerate}[(i)]
        \item If $n\ge 3a+\tau+1$, then $w(n-a)\ge 1$, and by \eqref{eq:cfnupfarstar}, his move results in $$J^R=G(n)+(w(n-a)-1)\cdot\cgup\;\cgfarstar.$$ since Left loses as first player on $J$ and $J^R<J$, Left also loses as first player on $J^R$. Hence, Right wins.
        \item If $2a+\tau+1\le n\le 3a+\tau+1$, then $w(n-a)=0$, and Right can move from $\cgfarstar$ to 0 in $J$, leading to $G(n)$, which is an $\rp$-position by Theorem~\ref{thm: outcomes of TS}(\ref{item:thm: outcomes of TS:3}) and hence, Right wins.
        \item If $a+\tau+1\le n\le 2a+\tau$, then $w(n-a)=-1$, and his move results in $G(n)$ which is an $\rp$-position by Theorem~\ref{thm: outcomes of TS}(\ref{item:thm: outcomes of TS:3}), and hence, Right wins.
    \end{enumerate}This completes Case (2).
\end{proof}

Furthermore, we conjecture that the formula remains valid for $(b-a)/2\le\tau<b-a$. Note that {\sc TS} is not an all-small ruleset and the atomic weight does not exist for all positions.

\section{Open Problems}
\begin{conjecture}[Atomic Weight of {\sc TS}]\label{con: AW of TS for all k<b-a}
    Let $G=\ts_\tau(n;a,b)$ where $a<b$. If $\tau< (b-a)$, then for all $n\ge R_{\hbeta}$
    \begin{align}
    \aw(G)=-\left\lfloor \frac{n-R_{\hbeta}}{a}\right\rfloor.
    \end{align}
\end{conjecture}


\subsection*{Domination in Truncated Support}
We find some dominated options of {\sc TS} at the knife's edge (truncation level $\kappa=b-a$). 
Given $a$ and $b$, by Theorem~\ref{thm: CF periodicity in TS at b-a}, there are at most $b+1$ distinct canonical forms of $\ts_\kappa(n;a,b)$ and those can be represented by  $$\ts_\kappa(0;a,b),\;\ts_\kappa(1;a,b),\dots,\;\ts_\kappa(b;a,b).$$ The options of these games have heap sizes ranging from $0$ to $b-1$. Thus, only the games with heap sizes $0,1,\dots,b-1$ are required to be compared. Among these, by Observation~\ref{obs: TS=n if n<=tau}, the first $\kappa+1$ heap sizes are canonical form integers, and consequently they are comparable. For the remaining part, we compare the consecutive games, $G=\ts_{\kappa}(n;a,b)$ and $H=\ts_{\kappa}(n+1;a,b)$, where $n\in \Set{\kappa+1,\dots,b-3,b-2}$, in the next proposition. We show that $G\le H$ if $n\notin  Q$, where $$Q\coloneq\Set{(m+1)(b-a)+m\SetSymbol m\in \mathbb Z_{\geq 1}}.$$

\begin{proposition}\label{prop: domination at b-a}
Consider $G=\ts_{\kappa}(n;a,b)$ and $H=\ts_{\kappa}(n+1;a,b)$ where $3\le a<b$. Then, $G\le H$ for all $n\in \Set{b-a+1,\dots,b-3,b-2}\backslash Q$.
\end{proposition}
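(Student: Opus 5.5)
To show $G\le H$, I will show that $H-G=\ts_{\kappa}(n+1;a,b)+\ts_{\kappa}(n;b,a)\ge 0$, that is, Left wins $H-G$ whenever Right moves first. In the negative component Left's subtraction set is $\{\kappa+1,\dots,b\}$ and Right's is $[a]$. Since $n+1\le b-1$, both heaps are at most $b$. So every position reached has heap sizes below $b$, and outcomes of single components are available from Lemma~\ref{lem: outcome TS heap<=b}. I will argue by induction on $n$. The base information is that $\ts_\kappa(m;a,b)=m$ for $m\le\kappa$ (Observation~\ref{obs: TS=n if n<=tau}), so comparisons among heaps of size at most $\kappa$ are comparisons of integers.

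The strategy for Left is an ``offset-by-one mirror'', keeping the heap in $H$ exactly one larger than the heap in $G$.
\begin{itemize}
\item If Right moves in $-G$, removing $i\in[a]$ to reach heap $n-i$, Left removes $i$ from $H$ (legal, since $i\le a$). This restores the pair $\ts_\kappa(n-i+1)-\ts_\kappa(n-i)$, which Left wins by induction, provided $n-i$ lies in the admissible range and is not in $Q$.
\item If Right moves in $H$, removing $j\in\{\kappa+1,\dots,b\}$ to reach heap $n+1-j$, Left removes $j-1$ from $-G$. This is legal whenever $j\ge\kappa+2$ and $j-1\le n$, and it restores the pair at heap $n+1-j$.
\end{itemize}
The leftover moves are handled separately.
\begin{itemize}
\item If Right removes the whole heap in $H$, so $j=n+1$, then $-G$ becomes a position in which Left is the next player. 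Left wins by removing all of $-G$ herself, which is legal since $\kappa+1\le n\le b$.
\item If Right removes exactly $j=\kappa+1$, the mirror fails. The heap in $H$ is then $n-\kappa\le a$, and Left must instead use Lemma~\ref{lem: outcome TS heap<=b}. One option is to take $-G$ down to a heap of size at most $\kappa$, so that it becomes a negative integer, and then compare it with the small $H$-heap directly. Another is to remove everything she can from $-G$ and win the remaining $H$-heap by going first.
\end{itemize}

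The main obstacle is the set of ``bad'' sizes at which the induction chain breaks. Already at the bottom, $\ts_\kappa(\kappa+1)\not\ge\kappa$, because Right moves to $0$. So the inductive pair $(m,m+1)$ is not always favourable, and the mirror can lead Left into a failing pair. I expect that tracking which pairs $(m,m+1)$ with $m<n$ are unsafe (including those around heap $\kappa$) produces exactly the arithmetic progression $Q=\{(m+1)\kappa+m\}$. The hypothesis $n\notin Q$, together with $a\ge3$ (which ensures that Left has a spare alternative move), should guarantee that whenever Right steers toward a bad pair, Left has a different legal reply. Such a reply would remove $i\pm1$ instead of $i$, landing on a pair $(m,m')$ with $m'-m\in\{0,2\}$; these are settled by an equal-heap cancellation (for $m'=m$, a zero game) or by comparisons of integers (Observation~\ref{obs: TS=n if n<=tau}).

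I would first compute small cases, such as $\ts_1(n;5,6)$ from Table~\ref{tab: cf of ts1(n;5,6)} and $\ts_4(n;3,7)$, to pin down exactly which pairs fail. Then I would phrase the induction hypothesis as ``$\ts_\kappa(m)\le\ts_\kappa(m+1)$ for all $m<n$ with $m\notin Q$'', strengthened with the explicit integer comparisons for $m\le\kappa$, and check each of Right's moves against it.
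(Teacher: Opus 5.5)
Your setup (show $H-G\ge 0$ by letting Right move first, where $-G$ gives Left the set $\{\kappa+1,\dots,b\}$) is right, and most of Right's moves are indeed harmless. However, the one case that carries the content of the proposition is handled with replies that lose.

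Write $T(m)=\ts_\kappa(m;a,b)$. After Right removes exactly $\kappa+1$ from $H$, the position is $T(r)-T(n)$ with $r=n-\kappa$, and Left is to move. Your two replies are to empty $-G$, or to cut it to an integer heap $s\le\kappa$. These work only when $r\le\kappa$, i.e.\ $n\le 2\kappa$.

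If $n\ge 2\kappa+1$, then $\kappa+1\le r\le a-2$, so $T(r)\in\np$ by Lemma~\ref{lem: outcome TS heap<=b}. After your reply it is Right, not Left, who moves next, and he removes the whole heap $r$ and wins. For example, take $\ts_1(\cdot\,;5,6)$ with $n=4$, which the proposition covers: Right's move to $T(3)-T(4)$ defeats both of your suggestions.

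Your first bullet, by contrast, never needs induction. For $i\le a-1$, Left removes $i+1$ from $H$ and reaches $0$. For $i=a$, both heaps are at most $\kappa-1$ and differ by one. Every $j\ge\kappa+2$ is answered by equalizing. So $Q$ does not arise from Right steering the mirror in $-G$ towards bad pairs. As written, your argument establishes the claim only for $n\le 2\kappa$, where $Q$ plays no role, and the rest is an expectation rather than a proof.

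The missing idea is that the offset-by-one mirror does not fail at $j=\kappa+1$. Left should remove exactly $\kappa+1$ from $-G$ (not $j-1=\kappa$, which is illegal); this is legal since $n\ge\kappa+1$, and it reaches $T(n-\kappa)-T(n-\kappa-1)$. Let $D(m)$ mean $T(m)\le T(m+1)$. Then this reply gives $D(n-\kappa-1)\Rightarrow D(n)$.

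Since $Q=\{q(\kappa+1)-1 : q\ge 2\}$, an $n$ in the range lies outside $Q$ iff $n\not\equiv\kappa\pmod{\kappa+1}$. Then $m=n-\kappa-1\not\equiv\kappa$ as well, so either $m<\kappa$, where $D(m)$ is an integer comparison, or $m\in\{\kappa+1,\dots,b-2\}\setminus Q$, where induction applies. That completes the proof.

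Conversely, if $D$ holds on $\{n-\kappa,\dots,n-1\}$, transitivity gives $T(n-\kappa)\le T(n)$, and Right's move $j=\kappa+1$ wins. This is exactly why $D$ fails at $\kappa$ and on $Q$. The hypothesis $a\ge 3$ merely makes the range nonempty; it does not supply a spare move.
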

We omit the proof. This proposition tells that if $b-a$ is large, then for most $n\in \{\kappa+1\dots,b\}$, $n\notin Q$, and therefore, most of the Left and Right options are dominated. Hence, the game values simplify. We suggest the following problem.

\begin{problem}
    Is it true that the only dominated options of $\ts_{b-a}(n;a,b)$ are given by Proposition~\ref{prop: domination at b-a}.  
\end{problem}

\begin{problem}
    Find the canonical forms of {\sc Truncated Support}.
\end{problem}

\appendix
\section{Basic CGT.}\label{sec: prelims}
    To establish the foundation for our results on canonical forms and atomic weights, we briefly revisit some fundamental concepts from Combinatorial Game Theory (CGT). 
    Familiarity with basic CGT terminology is assumed; several standard results are stated without proof, as they are well covered in the literature (see, e.g., \cites{BCG2004, S2013}).


    A game $G$ in CGT is recursively defined as $G = \{G^{\mathscr{L}} \mid G^{\mathcal{R}}\}$, where $G^{\mathcal{L}}$ and $G^{\mathcal{R}}$ denote the sets of options available to the players Left and Right, respectively.  
    The base of the recursion is the \emph{zero game}, where neither player has a move, denoted by $0 = \{\varnothing \mid \varnothing\}$. As usual, $\varnothing$ denotes an empty set (of options).
    
    Two games can be played together under the rule that on a player’s turn, they must choose one game and make a legal move in it. This idea is formalized below.

    \begin{definition}[Disjunctive Sum]\label{def: disjunctive sum}
        Let $G$ and $H$ be short games. The \emph{disjunctive sum} $G + H$ is defined recursively by
        $$G + H = \{\, G^{\mathcal{L}} + H,\, G + H^{\mathcal{L}} \mid G^{\mathcal{R}} + H,\, G + H^{\mathcal{R}} \,\}.$$
        Here, $G^{\mathcal{L}} + H = \{\, X + H : X \in G^{\mathcal{L}} \,\}$, and the other terms are defined analogously.
    \end{definition}
    We now recall the first fundamental theorem of CGT.

    \begin{theorem}[First Fundamental Theorem]
        Every short game belongs to exactly one of the four \emph{outcome classes}:
        \begin{itemize}
            \item $\mathscr{L}$: Left can force a win.
            \item $\mathscr{R}$: Right can force a win.
            \item $\mathscr{N}$: the Next (first) player can force a win.
            \item $\mathscr{P}$: the Previous (second) player can force a win.
        \end{itemize}
        The outcome class of a game $G$ is denoted by $o(G)$.
    \end{theorem}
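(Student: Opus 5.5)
The plan is to prove the theorem by induction on the birthday of $G$, i.e.\ the depth of its game tree. This is well defined because $G$ is short: finitely many positions and no infinite runs. The statement has two halves, existence and exclusivity. Both reduce to a single determinacy claim: for every short game $G$ and each choice of starting player, exactly one of Left and Right has a winning strategy.

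\textbf{Step 1 (determinacy).} I would define two Boolean predicates recursively. Let $\lambda(G)$ hold iff there is some $G^L\in G^{\mathcal L}$ such that Left wins $G^L$ when Right moves first. Let $\rho(G)$ hold iff there is some $G^R\in G^{\mathcal R}$ such that Right wins $G^R$ when Left moves first.

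By the induction hypothesis applied to the options, which have smaller birthday, ``Left wins $G^L$ with Right to move'' is equivalent to ``Right does not win $G^L$ with Right to move''. From this one checks that $\lambda(G)$ is true exactly when Left has a winning strategy moving first in $G$, and false exactly when Right has one. In the false case, every Left option $G^L$ is a Right-first win for Right, and this includes the case where Left has no move at all, since a player unable to move loses. The symmetric statement holds for $\rho(G)$. The base case $G=0$ is immediate: whoever is to move has no move and loses, so $\lambda(0)$ and $\rho(0)$ are both false.

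\textbf{Step 2 (classification).} Assign the class according to the pair $(\lambda(G),\rho(G))$:
\begin{itemize}
\item $(\text{true},\text{false})$ gives $\mathscr L$, since Left wins whoever starts;
\item $(\text{false},\text{true})$ gives $\mathscr R$;
\item $(\text{true},\text{true})$ gives $\mathscr N$, since the starting player wins;
\item $(\text{false},\text{false})$ gives $\mathscr P$.
\end{itemize}
Each of the four descriptions in the theorem says who wins in each of the two starting scenarios. By Step 1 exactly one player wins each scenario, so a game satisfies a given description iff its pair of truth values matches the corresponding row. Hence every $G$ lies in at least one class. Since the four pairs are distinct, $G$ lies in at most one class.

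The only real obstacle is Step 1, and in particular the base case and the empty-option convention. The normal play rule, that a player who cannot move loses, is exactly what makes the induction start correctly at $0$ and makes an empty set of options count as a loss. Everything after that is bookkeeping. I would state the strategy explicitly: Left moving first chooses the witnessing $G^L$ and then follows the second-player winning strategy there, which is supplied by the induction hypothesis.
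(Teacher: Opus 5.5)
Your proof is correct. The paper gives no proof of this statement: it recalls it in the appendix as standard background from the cited literature (Siegel, Berlekamp--Conway--Guy), where it is established by the same induction on birthday that you use. That argument determines who wins with Left to move and who wins with Right to move, and the normal-play convention handles empty option sets.

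The key point, which you handle correctly, is to carry exclusivity (``exactly one player wins each starting scenario'') inside the induction hypothesis. That is what lets Step 2 give both existence and uniqueness of the outcome class.
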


    
    The short (finite) games are ordered with respect to their outcome classes. 
    The outcome classes are partially ordered according to their favorability to Left:
    $$\mathscr{L} \;>\; \mathscr{N}\;>\;\mathscr{R}\quad \text{and}\quad \mathscr{L} \;>\;\mathscr{P} \;>\; \mathscr{R}.$$

    The structure is visualized  in Figure~\ref{fig: partial order structure}. Whenever we use the terminology ``Right wins $G$'', then we mean $G\in \rp$, and similarly for Left.

    \begin{figure}[h!]
        \centering
        \begin{tikzpicture}[scale=0.5]
            \node (L) at (0,0) {$\mathscr{L}$};
            \draw (L.south west)--+(-0.8,-1) node[anchor=north] (N) {$\np$};
            \draw (L.south east)--+(0.8,-1) node[anchor=north] (P) {$\pp$};
            \draw (N.south)--+(0.8,-1) node[anchor=north west] (R) {$\rp$};
            \draw (P.south)--(R.north east);
        \end{tikzpicture}
        \caption{Partial order structure of outcome classes.}
        \label{fig: partial order structure}
    \end{figure}

    \begin{definition}
        Let $G$ and $H$ be short games. Then, $G\ge H$, if for all short games $X$, $$o(G+X)\ge o(H+X). $$
    \end{definition}
    Intuitively, $G \ge H$ means that $G$ is at least as favorable to Left as $H$.  
    While this definition captures the idea of game comparison, it is not practical for computation since the set of all games is infinite.  
    The following result, known as the \emph{Second Fundamental Theorem}, provides a more practical characterization when comparing games with the zero game.
    
   \begin{theorem}[{\cite{S2013}*{Proposition~1.18}}]\label{thm: second fundamental theorem}
        Let $G$ be a short game. Then the following holds:
        \begin{itemize}
            \item $G=0 \Leftrightarrow o(G)=\mathscr{P}$,
            \item $G\ge 0 \Leftrightarrow o(G)\ge \mathscr{P}$,
            \item $G> 0 \Leftrightarrow o(G)=\mathscr{L}$,
            \item $G\le 0 \Leftrightarrow o(G)\le \mathscr{P}$,
            \item $G< 0 \Leftrightarrow o(G)= \mathscr{R}$,
            \item $G\cglfuz 0 \Leftrightarrow o(G)\le \mathscr{N}$,
            \item $G \cggfuz 0 \Leftrightarrow o(G)\ge \mathscr{N}$,
            \item $G\cgfuzzy 0 \Leftrightarrow o(G)= \mathscr{N}$.
        \end{itemize} 
    \end{theorem}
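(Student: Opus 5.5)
The plan is to prove one core equivalence, $G\ge 0 \Leftrightarrow o(G)\ge\pp$, and derive the other seven items from it by formal manipulation using negation. By $o(G)\ge\pp$ I mean $o(G)\in\{\lp,\pp\}$, that is, Left wins $G$ when Right moves first.

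\emph{Forward direction.} Suppose $G\ge 0$. Taking $X=0$ in the definition of $\ge$ gives $o(G)=o(G+0)\ge o(0+0)=o(0)=\pp$, and this direction is done.

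\emph{Converse direction.} Assume Left wins $G$ as second player. I must show $o(G+X)\ge o(X)$ for every short $X$. It suffices to prove two transfer statements:
\begin{enumerate}
\item if Left wins $X$ moving first, she wins $G+X$ moving first;
\item if Left wins $X$ moving second, she wins $G+X$ moving second.
\end{enumerate}
These suffice because the outcome class is determined by who wins under each choice of starting player, and the partial order on outcome classes is exactly the product order on these two bits. I prove (1) and (2) simultaneously by induction on the sum of the birthdays of $G$ and $X$. Left's strategy in $G+X$ is as follows. She follows her winning strategy in $X$. Whenever Right moves in $G$ to some $G^R$, she answers with her winning response $G^{RL}$. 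The key check is that this response keeps the invariant: $G^{RL}$ is again a game in which Left wins moving second. Then each successor position has the same shape ``(game Left wins as second player) $+$ (a position of $X$ that is good for Left)'' with smaller total birthday, so the induction hypothesis applies. Right eventually runs out of moves in one of the components and so loses. The main care needed is in organizing this induction precisely; it is the step I expect to be the only real obstacle, although the argument itself is standard.

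\emph{Dual and derived items.} Applying the core equivalence to $-G$ gives $G\le 0\Leftrightarrow o(G)\le\pp$, since negation swaps the roles of Left and Right and reverses the outcome order. The remaining items then follow:
\begin{itemize}
\item $G=0$ means $G\ge0$ and $G\le0$. This holds iff $o(G)\in\{\lp,\pp\}\cap\{\rp,\pp\}=\{\pp\}$.
\item $G>0$ means $G\ge0$ and $G\not\le0$. This gives $o(G)=\lp$; similarly $G<0$ gives $o(G)=\rp$.
\item $G\cglfuz0$ means $G\not\ge0$. This gives $o(G)\in\{\np,\rp\}$, i.e.\ $o(G)\le\np$; dually $G\cggfuz0$ gives $o(G)\ge\np$.
\item $G\cgfuzzy0$ means neither $G\ge0$ nor $G\le0$. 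This gives $o(G)\notin\{\lp,\pp,\rp\}$, so $o(G)=\np$, using the First Fundamental Theorem that every short game lies in exactly one of the four outcome classes.
\end{itemize}
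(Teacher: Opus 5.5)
Your proof is correct, and it is the standard textbook argument behind the paper's citation of Siegel's Proposition~1.18; the paper itself gives no proof. Your steps match that argument: the two transfer statements proved by simultaneous induction on birthday, the dual obtained via negation, and the remaining items obtained by intersecting sets of outcome classes. The one loose phrase is your closing remark that Right ``runs out of moves in one of the components''. He loses only when he is stuck in the whole sum, and your induction already ensures this because Left always has a reply, so the proof is unaffected.
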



    Using the second fundamental theorem, one can compare any game or disjunctive sum of games with 0. If inverse of a game is known with respect to the disjunctive sum, then any two games can be compared. 
    The following definition introduces the \emph{conjugate} of a game, which we shall see serves as the inverse of that game under disjunctive sum.
    \begin{definition}[Conjugate]
        Let $G$ be a short game. The conjugate of $G$, denoted by $\Bar{G}$, is defined recursively by $\Bar{G} = \{\Bar{G}^R\mid \Bar{G}^L\}.$
    \end{definition}

    Observe that the moves available to Left in a game $G$ are the same as those available to Right in its conjugate $\bar{G}$, and vice versa. Note that $\bar{G}$ is same as $-G$ defined in \cite{S2013}.

    \begin{theorem}[{\cite{S2013}*{Theorem~1.13}}]\label{thm:+ has inv}
        Let $G$ be a short game. Then $G+\bar{G}=0$.
    \end{theorem}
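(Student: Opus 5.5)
The plan is to use the Second Fundamental Theorem (Theorem~\ref{thm: second fundamental theorem}): $G+\bar G=0$ holds if and only if $o(G+\bar G)=\pp$. So it suffices to show that the second player wins $G+\bar G$ whoever starts. I will prove this by induction on the birthday of $G$ (equivalently, on the total number of options in the game tree, which is finite since $G$ is short). The base case is $G=0$. There $\bar G=\{\varnothing\mid\varnothing\}=0$, so $G+\bar G=0$ and the first player has no move, hence $0\in\pp$.

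For the inductive step I use the \emph{mirror} (Tweedledum--Tweedledee) strategy. By Definition~\ref{def: disjunctive sum} and the definition of the conjugate, the Left options of $G+\bar G$ are of two kinds:
\begin{itemize}
\item $G^L+\bar G$ for some $G^L\in G^{\mathcal L}$;
\item $G+\overline{G^R}$ for some $G^R\in G^{\mathcal R}$, since the Left options of $\bar G$ are exactly the conjugates $\overline{G^R}$.
\end{itemize}
Suppose Left moves to $G^L+\bar G$. Right answers in the other component with the Right option $\overline{G^L}$ of $\bar G$, reaching $G^L+\overline{G^L}$. Suppose instead Left moves to $G+\overline{G^R}$. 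Right answers with $G^R$ in the first component, reaching $G^R+\overline{G^R}$. In both cases the resulting position has the form $K+\bar K$ with $K$ an option of $G$, so it is a $\pp$-position by the induction hypothesis. Since Left is now to move there, Right wins. The argument for Right moving first is symmetric, with Left mirroring. Hence $G+\bar G\in\pp$.

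The only step needing care is checking that the mirror reply is always available and that the definitions really match. This comes down to unwinding the recursive definition of $\bar G$: its Left options are $\{\overline{G^R}\}$ and its Right options are $\{\overline{G^L}\}$, with the conjugate of a conjugate being the original game ($\bar{\bar K}=K$, again by induction). With this verified, the reply position is literally of the form $K+\bar K$ for an earlier-born $K$, so the induction closes. I do not expect a real obstacle. The inductive hypothesis must be stated for all games of smaller birthday, not just for $G$'s options as a list, so that the identity $\bar{\bar K}=K$ and the $\pp$ claim are available simultaneously.
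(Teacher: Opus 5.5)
Your proof is correct and matches the standard argument behind the result the paper cites without proof (Siegel, Theorem~1.13): reduce via the Second Fundamental Theorem to showing $G+\bar G$ is a $\mathscr{P}$-position, then apply the inductive mirror (Tweedledum--Tweedledee) strategy. The appeal to $\bar{\bar K}=K$ is harmless but not actually needed, since each mirror reply lands directly on $K+\bar K$ with $K$ an option of $G$.
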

    Theorem~\ref{thm:+ has inv} implies that $\bar{G}$ is the inverse of $G$ under the disjunctive sum. This allows us to compare any two games.

     Using Theorem~\ref{thm: second fundamental theorem}, we obtain $\cgstar + \cgstar = 0$ as well as $\{-1 \mid 1\}=0$ as outcome of both $\cgstar+\cgstar$ and $\{-1\mid 1\}$ is $\pp$. This illustrates that combinatorial games can have multiple equivalent representations, even though their explicit forms are not identical. For instance, $\cgstar + \cgstar$ has an option $\cgstar$, whereas $0$ has no options, yet the two games are equivalent. We discuss this phenomenon in more detail in the next subsection.

    \subsection{Canonical Forms}\label{subsec: canonical form}
    As discussed above, every short combinatorial game $G$ may have several equivalent representations. This occurs because some options can be strategically inferior 
    while others may be overly advantageous. In this subsection, we show that every short game $G$ has a unique simplest form, called its \emph{canonical form}. The canonical form of a game is obtained by repeated application of two simplification rules: \emph{domination} and \emph{reversibility}.

        

    \begin{definition}[Dominated and Reversible Options]
        Let $G$ be a short game.
        \begin{itemize}
            \item A Left option $G^{L_1}$ is \emph{dominated} by another Left option $G^{L_2}$ if $G^{L_2} \ge G^{L_1}$. The definition is symmetric for Right.
            
            \item A Left option $G^{L_1}$ is \emph{reversible} through a Right option $G^{L_1R_1}$ if $G^{L_1R_1} \le G$ for some $G^{L_1R_1} \in (G^{L_1})^{\mathscr{R}}$. The definition is symmetric for Right.
        \end{itemize}
    \end{definition}
    Intuitively, if $G^{L_1}$ is dominated by $G^{L_2}$, then Left will always prefer to move to $G^{L_2}$ rather than to $G^{L_1}$, since $G^{L_2}$ is at least as favorable to her as $G^{L_1}$. If $G^{L_1}$ is reversible, then after Left moves to $G^{L_1}$, Right can respond with $G^{L_1R_1}$, a position that is no worse (and possibly better) for Right than $G$ itself. Hence, After Left's move to $G^{L_1}$, Right will always move to $G^{L_1R_1}$.

    \begin{figure}[ht]
        \centering
        \begin{subfigure}[b]{0.40\textwidth}
        \centering
            \begin{tikzpicture}[every node/.style={font=\large}]
        
                \node (G) {$G$};
                \draw[->] (G.south west)--+(-1.5,-1) node[anchor=north] (GL) {$G^{L_1}$};
                \draw[->] (G.south west)--+(-0.5,-1) node[anchor=north] (GL1) {$G^{L_2}$};
                \draw[->] (GL.south)--+(0.5,-1) node[anchor=north] (GLR1) {$G^{L_1R_1}$};
                \draw[->] (GL.south)--+(1.8,-1) node[anchor=north, draw, red] (GLR2) {$G^{L_1R_2}$};
                \draw[->] (GL.south)--+(3.1,-1) node[anchor=north] (GLR3) {$G^{L_1R_3}$};
                \draw[->] (GLR2.south)--+(-0.5,-1) node[blue,anchor=north] (H3) {$H_3$};
                \draw[->] (GLR2.south)--+(-1.5,-1) node[blue,anchor=north] (H2) {$H_2$};
                \draw[->] (GLR2.south)--+(-2.5,-1) node[blue,anchor=north] (H1) {$H_1$};        
            \end{tikzpicture}
            \caption{Game tree of literal form of $G$}
        \end{subfigure}%
        \hspace{0em}%
        \begin{subfigure}[b]{0.40\textwidth}
        \centering
            \begin{tikzpicture}
                \node (G) {$G$};
                \draw[->] (G.south west)--+(-0.5,-1) node[anchor=north] (GL1) {$G^{L_2}$};
                \draw[->] (G.south west)--+(-1.5,-1) node[anchor=north,blue] (H3) {$H_3$};
                \draw[->] (G.south west)--+(-2.5,-1) node[blue,anchor=north] (H2) {$H_2$};
                \draw[->] (G.south west)--+(-3.5,-1) node[blue,anchor=north] (H1) {$H_1$};
                \draw[->,white] (H3)--+(0,-3.03) node (b) {};
            \end{tikzpicture}
            \caption{Game tree of simplified form of $G$ when $G^{L_1R_2}\le G$}
        \end{subfigure}
        \caption{Bypassing a reversible option. If $G^{L_1R_2}\le G$, then $G^L$ can be replaced by the set of all Left options of $G^{L_1R_2}$.}
        \label{fig: bypassing a reversible option}
    \end{figure}
    To simplify a game $G$, one can remove all dominated options of $G$. Moreover, if a left option $G^{L_1}$ is reversible through $G^{L_1R_1}$, then $G^{L_1}$ can be replaced by the left options of $G^{L_1R_1}$, as illustrated in Figure~\ref{fig: bypassing a reversible option}; an analogous rule applies to right options. By repeating this process of removing and replacing, one eventually obtains the simplest possible form of the game, known as the \textbf{canonical form} of $G$. This result follows from \cite{S2013}*{Theorems~2.4~and~2.5}.
    

    \begin{example}
        Let $G = \uparrow + *$, where $\uparrow = \{0\mid *\}$. Then $ G=\{\uparrow+0, 0+*\mid \uparrow+0, *+*\} = \{\uparrow,*\mid \uparrow,0\}$. The canonical form of $G$ is $\{0,*\mid 0\}$.
    \end{example}

    The next subsection explores a particular type of game where neither player wants to start. 

\subsection{Numbers and stops}
    Intuitively, a game is called a `Number' if each player prefers that the other player starts. 
    Recall that a {\em sub-position} of a game can be the game itself or any option of the game or any option of options, etc.

    \begin{definition}[Numbers]\label{def: Number}
     A short game $x$ is a Number if, in the canonical form of $x$, every sub-position $y$ satisfies $y^L<y^R$ for all $y^L$ and $y^R$.\footnote{The definition of a Number in \cite{S2013} does not hold for $\{*\mid*\}$ and many other literal form games that equal some canonical form Number.}
    \end{definition}

    The most basic Number games are integer games defined as follows: For all \( k \in \mathbb{Z}_{>0} \), the \emph{integer games} \( k \) and \( -k \) recursively defined as: 
    \begin{itemize}
        \item $k = \left\{ k-1 \mid \varnothing \right\}$;
        \item $-k = \left\{ \varnothing \mid -k+1 \right\}$,
    \end{itemize}
    where $0 = \{\varnothing \mid \varnothing \}$. 

    For all odd $k\in \Z$ and $n\in \Nat$, we define the {\em dyadic rational games} recursively as: $$ \frac{k}{2^n}=\left\{ \left[\frac{k-1}{2^n}\right] \mid \left[\frac{k+1}{2^n}\right] \right\},$$ 
    where the brackets denote the reduction of the fraction such that the numerator and denominator do not have a common divisor other than 1. For example, with $k=5$ and $n=1$, the game $5/2=\{2\mid 3\}$. We will call all integers and dyadic rationals simply as {\em dyadics} and denote the set of all dyadics by $\mathbb{D}$. As shown in \cite{S2013}*{Proposition 3.5} integer and dyadic rational games follow the standard arithmetic properties. 
    For instance, the disjunctive sum of the games $1$ and $\frac{1}{2}$ equals the game $1+\frac{1}{2}=\frac{3}{2}$. By definition, all dyadics are Numbers. 
    
    Let us restate a few theorems on Numbers. 

    \begin{theorem}[{\cite{S2013}*{Theorem~3.13}}]\label{thm: number avoidance}
        Let $x$ be a Number. If $G$ is not a Number then, $G+x=\{G^\mathcal L+x\mid G^\mathcal R+x\}$.
    \end{theorem}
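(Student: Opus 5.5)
Number Avoidance (Theorem~\ref{thm: number avoidance}) claims that when $G$ is not a Number and $x$ is a Number, $G+x=\{G^{\mathcal L}+x\mid G^{\mathcal R}+x\}$. My plan is to set $K=\{G^{\mathcal L}+x\mid G^{\mathcal R}+x\}$ and prove $G+x-K\in\pp$ by Theorem~\ref{thm: second fundamental theorem}. Equivalently, I will show $G+x\ge K$ and $G+x\le K$ separately; by symmetry it suffices to do the first, i.e.\ show that Left wins $G+x-K$ moving second.

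First I reduce to canonical forms. Both sides depend only on the values of $G$ and $x$, provided the options of $G$ are well defined up to equality, so I may take $G$ and $x$ in canonical form. The key structural input is the standard \emph{number translation / weak avoidance} fact. Since $G$ is not a Number, it is not equal to any Number. Hence if some move to $G+x^L$ (a move in the number) is winning for a player, then some move in $G$ is also winning for that player. I would prove this weak form first, by induction on the birthdays of $G$ and $x$, using $x^L<x<x^R$ for Numbers. The canonical-form definition of Number (Definition~\ref{def: Number}) gives exactly that all options of sub-positions of $x$ are ordered, so $x^L<x<x^R$ follows inductively.

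With this in hand, the plan for showing Left wins $G+x-K$ as second player runs as follows. When Right moves to $G^R+x-K$, Left answers with the mirror option $-(G^R+x)$ in $-K$ and reaches $0$. When Right moves in $-K$ to $G+x-(G^L+x)$, Left mirrors with $G^L$ in $G$. The remaining case is that Right moves in the number, to $G+x^R-K$. Here I would argue by induction that $G+x^R=\{G^{\mathcal L}+x^R\mid G^{\mathcal R}+x^R\}$. The inequality $x^R>x$ then gives $G+x^R-K\ge G+x-K$ in a way that lets Left respond as in the previous cases: each option of $K$ is dominated coordinatewise by the corresponding option with $x^R$. Concretely, Left answers in $-K$ with $-(G^R+x)$, leaving $G+x^R-G^R-x$. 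From there I want to show Left wins when Right starts, which reduces, via $G-G^R$, to showing that moving first in $x^R-x>0$ is never needed.

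I expect the main obstacle to be this last case, where Right plays in the number. Pure mirroring fails there, and one really needs the weak avoidance lemma: whenever the only good move seems to lie in a Number component, a move in the non-number component $G$ is at least as good. To handle it, I would invoke the weak lemma on the difference game, noting that $G-G^R$ plus a number is not a Number, or handle the case where it becomes a Number separately. If that gets messy, the fallback is simply to cite \cite{S2013}*{Theorem~3.13}, as the paper itself does.
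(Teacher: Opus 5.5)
The paper contains no proof of this statement. It only cites \cite{S2013}, so your fallback of citing it is exactly what the paper does. As a self-contained argument, however, your sketch has a genuine gap at the one nontrivial step: the case where Right moves in the number to $G+x^R-K$.

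A smaller issue first. The reduction to canonical $G$ is not legitimate. $K$ is built from the options of the given form of $G$, so it is not a priori a function of the value of $G$; proving that it is amounts to the theorem itself. Only $x$ may be put in canonical form.

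Now the crux.
\begin{itemize}
\item Induction plus monotonicity of $y\mapsto\{G^{\mathcal L}+y\mid G^{\mathcal R}+y\}$ gives only $G+x^R\ge K$. Left needs $G+x^R\not\le K$. Your inequality $G+x^R-K\ge G+x-K$ is circular, since $G+x-K\ge 0$ is what you are proving.
\item None of these steps uses that $G$ is not a Number. They hold verbatim for $G=-\tfrac12=\{-1\mid 0\}$ and $x=\tfrac14=\{0\mid\tfrac12\}$. There $K=\{-\tfrac34\mid\tfrac14\}=0=G+x^R\ne G+x$, and Right's move in the number wins.
\item Your reply, moving in $-K$ to $-(G^R+x)$, leaves $(G-G^R)+e$ with $e=x^R-x>0$. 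Whether this is $\ge 0$ depends on which $G^R$ you choose. Take the non-number $G=\{-1\mid -1,*\}=-1+*$ and $x=\tfrac12=\{0\mid 1\}$. The choice $G^R=*$ leaves $-\tfrac12<0$, which also shows that $G-G^R$ can be a Number. The choice $G^R=-1$ leaves $\tfrac12+*>0$.
\item The weak avoidance lemma cannot repair this. It only converts a winning move in the number into a winning move in the non-number part. It never shows that Right's moves inside $G-G^R$ lose.
\end{itemize}

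The missing ingredient is a lemma that uses the hypothesis quantitatively. If $G$ is not equal to a Number and $e>0$ is a Number, then $G^R<G+e$ for some Right option $G^R$; symmetrically, $G^L>G-e$ for some $G^L$. This follows from stops, as follows.
\begin{itemize}
\item Since $G$ is not a Number, its Right stop equals $\min_{G^R}$ of the Left stops of the $G^R$.
\item A $G^R$ attaining this minimum has Left stop equal to the Right stop of $G$, which is strictly less than the Right stop of $G+e$.
\item Hence $G^R<G+e$.
\end{itemize}
With this particular $G^R$, Left's reply $-(G^R+x)$ leaves $G+e-G^R>0$. The symmetric choice of $G^L$ handles Left's move to $G+x^L-K$. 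The proof is then complete for arbitrary forms of $G$. It needs no canonical form for $G$, no induction on $x$, and not even the avoidance lemma.
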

    
   \begin{theorem}[{\cite{S2013}*{Theorem~3.7}}]\label{thm: archimedean principle}
       For any game $G$, there exists a positive integer $n$ such that $-n<G<n$.
   \end{theorem}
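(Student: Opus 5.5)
The plan is to prove the sharper statement that every short game $G$ satisfies
\[
-(b(G)+1) < G < b(G)+1,
\]
where $b(G)$ is the birthday of $G$. We argue by induction on $b(G)$. The lower bound follows from the upper bound applied to the conjugate $\bar G$, which has the same birthday. Indeed, $\bar G < N$ gives $-N < G$, using Theorem~\ref{thm:+ has inv} and the fact that conjugation reverses order. So it suffices to show $G < N$ with $N := b(G)+1$. The base case is $G=0$, where $0<1$ is immediate.

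For the upper bound, we show that $N+\bar G \in \lp$ and then conclude $G<N$ by Theorem~\ref{thm: second fundamental theorem}. Recall $N=\{N-1\mid \varnothing\}$, so Right has no move in the integer component. The argument has two steps.

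\emph{Step 1: Right moving first.} Every Right move is in $\bar G$ and leads to $N+\overline{G^L}$ for some Left option $G^L$. Since $b(G^L)\le b(G)-1$, the induction hypothesis gives $G^L < b(G^L)+1 \le b(G) < N$. Hence $N+\overline{G^L}>0$, and Left wins it regardless of who moves next. So Right loses when he starts.

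\emph{Step 2: Left moving first.} Left moves $N\to N-1 = b(G)$, reaching $b(G)+\bar G$. We must show this position satisfies $b(G)+\bar G \ge 0$, i.e.\ that Right, now to move, loses. Right's only moves again lead to $b(G)+\overline{G^L}$. By induction $G^L < b(G^L)+1 \le b(G)$, so each such position is strictly positive, and Left wins it moving next. Hence $b(G)+\bar G\ge 0$ and Left wins.

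Combining the two steps, Left wins $N+\bar G$ whoever starts, so $N+\bar G\in\lp$ and $G<N$.

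The only delicate point is the birthday bookkeeping: we need $b(G^L)+1\le b(G)$ so that the inductive bound on options is strictly below $b(G)$. This holds by the definition of birthday as one more than the maximum birthday of the options. If $G$ has no Left options, the relevant steps are vacuous: in Step 2, Right has no move in $b(G)+\bar G$, so it is $\ge 0$ trivially. Everything else uses only the sum definition and Theorem~\ref{thm: second fundamental theorem}.
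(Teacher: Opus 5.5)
Your proof is correct. The paper gives no argument for this statement: it is quoted as background from Siegel's book (Theorem~3.7 there), so there is no in-paper route to compare against, and your write-up is a complete, self-contained proof. The key step is the bookkeeping $b(G^L)+1\le b(G)$, which gives $G^L<b(G)$ for every Left option. Beyond that, Right has no moves in a nonnegative integer, and the lower bound by conjugation (using $\overline{N}=-N$ and $\overline{\overline{G}}=G$) is fine.

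Two comments.

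First, your Step~2 alone already proves the sharper inequality $G\le b(G)$. It shows that Right loses moving first in $b(G)+\bar G$, and then $G\le b(G)<b(G)+1$ follows directly, so Step~1 is redundant. You could in fact run the induction on $-b(G)\le G\le b(G)$ itself, needing only $G^L\le b(G^L)\le b(G)-1<b(G)$.

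Second, the usual textbook argument is a non-inductive tempo count. In $N+\bar G$, Left always plays in the integer component, so every Right move is made in the $\bar G$ component and strictly lowers that component's birthday. Hence Right gets at most $b(G)$ moves while Left has $b(G)+1$ available, and Right runs out first whoever starts. Your induction encodes the same count recursively through Theorem~\ref{thm: second fundamental theorem} and the inverse $\bar G$. That avoids formalizing a global strategy, but it leans on transitivity and on the ordering of integer games. The tempo count is shorter and exhibits Left's winning strategy explicitly.
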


    Due to Theorem~\ref{thm: archimedean principle}, any game is smaller than large enough numbers and greater than small enough numbers. 
    The supremum of all numbers smaller than a given game $G$ is called the Right Stop of $G$ and similarly, the infimum of all numbers larger than $G$ is called the Left Stop of $G$. 

    If a game is a Number, then both of its stops are equal; however, the converse does not hold. Games such as $*$ and $\cgup$ are not Numbers, yet both have Left and Right stops equal to $0$. This observation motivates the study of a broader class of games whose stops are both $0$ but which are not necessarily equal to $0$. These games are known as \emph{infinitesimals}. The next subsection explores such games and their properties in detail.
    
    \subsection{Infinitesimals}\label{subsec: infinitesimals}
    As mentioned above, infinitesimals are games whose Left and Right stops are both $0$. Before examining their structure and properties, we begin with a simpler subclass of games, namely the \emph{impartial} games, where both players have identical move options from every position.
    
    \begin{definition}[Impartial Game]
        A combinatorial game is \emph{impartial} if, from every subposition, both players have the same set of available moves.
    \end{definition}
    
    If a game $G$ is impartial, we do not distinguish between Left and Right options, since both players have identical choices from every position. 
    Hence, we represent an impartial game simply as $G = \{G_1, G_2, \dots, G_k\}.$

    A {\sc Nim} heap is an example of an impartial game and it is denoted by $*n$ where $n$ is the size of the heap and $*n=\{0,*,*2,\dots,*(n-1)\}$. These single-heap {\sc Nim} positions are called \textbf{nimbers}. As shown in \cite{S2013}*{Theorem~1.3}, every impartial game $G$ is equal to $*m$ for some non-negative integer $m$. 

    A related but more general class of games allows different sets of moves for each player, provided that at every subposition, either both players have available moves or neither does. 
    Such games are known as \emph{all-small} games. 

    \begin{definition}[All-Small Games]
     A game is called all-small if, both players have available moves from every non-terminal subposition.
    \end{definition}
    
    Every impartial game is all-small, but the converse does not hold. For example, an instance of {\sc FS} with unequal positive wealths is all-small but not impartial.
    
    If $G$ is an all-small game, then for every positive dyadic rational $x$, we have $ -x < G < x$. This inequality follows from the observation that in the game $G + x$, Left can secure a win by continually playing in $G$, while in $G - x$, Right can win by consistently playing in $G$.

    This property, however, is not limited to all-small games. Consider the game $H = \left\{ \left\{ 2 \mid 0 \right\} \mid 0 \right\}$. It satisfies  $-x < H < x$ for every positive dyadic rational $x$. But $H$ is not an all-small game as the subposition $2$ offers a move for Left but not for Right. This motivates a more general notion than all-small.
    
    \begin{definition}[Infinitesimal Game]
        A combinatorial game $G$ is called infinitesimal if, for every positive dyadic rational $x$, it satisfies $-x<G<x$.
    \end{definition}

    Returning to all-small games, the canonical forms of such games can be quite complex, as illustrated in Table~\ref{tab: cf of ts1(n;5,6)}. This complexity often makes it difficult to determine which player holds an advantage and to what extent. 
    

    The simplest non-zero all-small game is $*=\{0\mid0\}$; it offers no advantage to either player, as first player wins this game. Notably, all-small games of the form $*n$, known as \emph{nimbers}
    , behave similar to $*$ in this context. 
    
    The next simplest all-small game is Up, denoted $\cgup = \{0 \mid *\}$, which favors Left, as Left always wins this game. Its negative, called Down, denoted $\cgdown = \{* \mid 0\}$, favors Right.

    Since these are the simplest all-small games that confer an advantage to a player, it is reasonable to expect that any other all-small game which provides a player with an advantage must, in some sense, be comparable to these. Indeed, as shown in \cite{S2013}*{Theorem~4.8}, for every all-small game $G$, there is some $n$ such that $n\cdot\cgdown<G<n\cdot\cgup$, where $n\cdot\cgup$ denotes the disjunctive sum of $n$ copies of $\cgup$. This motivates the idea of evaluating the bound of advantage in an all-small game by 
    determining minimum $n$ such that $n\cdot\cgdown<G<n\cdot\cgup$. But Conway gave a stronger notion that gives the exact advantage in an infinitesimal, especially all-small games, known as \emph{atomic weight}. Broadly, the atomic weight of a game is the number of copies of $\cgup$ ($\cgdown$, if negative), the game approximates.

    To pursue this idea, he gave a more flexible notion of equivalence than that given by canonical forms. 
    Recall that {\em birthday} of a game is the height of the game tree in its literal form.

    \begin{definition}[Equivalence under $\cgfarstar$]
        Two games $G$ and $H$ are said to be equivalent under $\cgfarstar$, denoted $\cgfarstareq{G}{H}$, if for every game $X$, ~$o(G + X + \cgfarstar) = o(H + X + \cgfarstar),$ where $\cgfarstar$ denotes a nimber of birthday larger than that of the games added to it.
    \end{definition}


    The following Theorem gives a practical way for checking whether two games are equivalent under $\cgfarstar$. For any positive integer~$n$, we denote the disjunctive sum of $n$ copies of~$\cgup$ by $n\cdot\cgup$, and $n\cdot\cgup+\cgfarstar$ by $n\cdot\cgup\;\cgfarstar$. When $n=1$, we write $\cgup\;\cgfarstar$ for $1\cdot\cgup+\cgfarstar$. Similar conventions apply to~$\cgdown$.
    

    \begin{theorem}[{\cite{LiP}*{Theorem~9.30}}]\label{thm: check farstareq}
        For two games $G$ and $H$, $\cgfarstareq{G}{H}$ if and only if $~\cgdown\;\cgfarstar<G-H<\;\cgup\;\cgfarstar$.
    \end{theorem}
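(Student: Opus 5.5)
The statement is quoted from \cite{LiP}, but I would reprove it directly from the definition of $\sim_{\cgfarstar}$ and Theorem~\ref{thm: second fundamental theorem}. Throughout I write $D\coloneq G-H$. I take the remote star $\cgfarstar=*N$ with $N\ge 2$ and $N$ larger than the birthday of every game in sight. I use two standard nimber facts: $\cgup+*N>0$ and $\cgdown+*N<0$ for all $N\ge 2$.

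\emph{Forward direction.} Assume $G\sim_{\cgfarstar}H$. Take $X=-H+\cgdown$ in the definition. This gives
\[
o(D+\cgdown\;\cgfarstar)=o(H-H+\cgdown\;\cgfarstar)=o(\cgdown\;\cgfarstar)=\rp,
\]
so $D<\cgup\;\cgfarstar$, because $-(\cgdown\;\cgfarstar)=\cgup\;\cgfarstar$ (as $\cgfarstar=-\cgfarstar$). Symmetrically, $X=-H+\cgup$ gives $o(D+\cgup\;\cgfarstar)=\lp$, so $D>\cgdown\;\cgfarstar$. The only thing to check is that the remote star in the definition may be taken with $N\ge 2$ and large compared with $X$; this is fine because the definition quantifies over all sufficiently large $N$.

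\emph{Reverse direction.} Assume $\cgdown\;\cgfarstar<D<\cgup\;\cgfarstar$. Since $G+X=D+(H+X)$, it suffices to prove the following absorption claim: for every short $Y$ (here $Y=H+X$), $o(Y+D+\cgfarstar)=o(Y+\cgfarstar)$. By symmetry under negation, it is enough to show two implications.
\begin{enumerate}
\item If Left wins $Y+\cgfarstar$ moving first, she wins $Y+D+\cgfarstar$ moving first.
\item If Left wins $Y+\cgfarstar$ moving second, she wins $Y+D+\cgfarstar$ moving second.
\end{enumerate}

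For these I would play in the sum $Y+D+\cgfarstar$, using $D-\cgdown\;\cgfarstar'>0$ with a second remote star $\cgfarstar'$. The idea is that $Y+D+\cgfarstar$ dominates $Y+\cgfarstar+\cgdown+\cgfarstar'$, and the two remote stars merge into a single remote star $\cgfarstar''$. So the claim reduces to a statement about adding a single $\cgdown$. Left follows her winning strategy in $Y+\cgfarstar''$. Whenever Right answers in the $\cgdown$ component, or in the ``excess'' positive part, Left replies locally using the strict positivity of $D-\cgdown\;\cgfarstar'$. The remote star ensures that Left always has a tempo-neutral move available, since it is longer than any other component.

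\emph{Main obstacle.} The hard step is exactly this absorption: showing that a strict inequality against $\cgdown\;\cgfarstar$ is enough to cancel a whole $\cgdown$ once a remote star is present. The naive bound only yields $Y+D+\cgfarstar\ge Y+\cgdown+(\text{nimber})$, which is too weak. I would first try induction on the birthday of $Y$, using the characterization $o(K+*N)\in\{\lp,\np\}$ if and only if some Left option $K^L$ satisfies $K^L+*N\ge 0$, or else $K\ge *m$ for some $m<N$. With $N$ remote, the second alternative reduces to conditions on $K$ alone, and then the inequalities for $D$ can be applied option by option. If that becomes unwieldy, the fallback is to cite the proof in \cite{LiP} directly.
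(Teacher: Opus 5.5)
You prove the forward direction correctly: taking $X=-H+\cgdown$ and $X=-H+\cgup$ in the definition gives exactly $D<\cgup\cgfarstar$ and $D>\cgdown\cgfarstar$. Your reduction of the converse to implications (1) and (2) via negation is also valid. The paper itself gives no argument here; it only quotes Lessons in Play.

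\textbf{The gap is the absorption step, and your route to it fails.} The inequality $D\ge\cgdown\cgfarstar'$ only yields $Y+D+\cgfarstar\ge Y+\cgdown+\cgfarstar''$. Left does \emph{not} in general win $Y+\cgdown+\cgfarstar''$ in the same role in which she wins $Y+\cgfarstar$:
\begin{itemize}
\item for $Y=0$ she wins $\cgfarstar$ moving first, but $\cgdown\cgfarstar''<0$;
\item for $Y=\cgup$ she wins $\cgup\cgfarstar$ moving second, but $\cgup+\cgdown+\cgfarstar''=\cgfarstar''\in\np$.
\end{itemize}
So the claim does not ``reduce to adding a single $\cgdown$''. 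Likewise, ``replying locally in the excess part $D-\cgdown\cgfarstar'$'' has no meaning, because that difference is not a component of the sum actually being played. Your fallback, induction on the birthday of $Y$ alone, also cannot work: Right may move inside $D$, and $D^R$ no longer satisfies the hypothesis.

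\textbf{What is missing is the correct, one-move-shifted use of the bounds.} Two facts are needed:
\begin{itemize}
\item If $Y+\cgfarstar\ge 0$, then Left wins $Y+\cgdown+\cgfarstar''$ moving \emph{first}, by playing $\cgdown\to *$; the $*$ merges into the remote star.
\item $D+\cgfarstar\not\le 0$, because Right's move $\cgup\to *$ in $D+\cgup\cgfarstar>0$ must lose.
\end{itemize}
With these one proves the one-sided lemma: if $D>\cgdown\cgfarstar$ and $Y+\cgfarstar\ge 0$, then $Y+D+\cgfarstar\ge 0$. The proof is by induction on the birthdays of $Y$ and $D$ jointly, taking the remote stars to be suitable powers of two so that all nim-sums stay remote. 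Right's moves are answered as follows.
\begin{itemize}
\item A Right move in the star is answered by the first fact.
\item A move $Y\to Y^R$: if Left's winning reply in $Y^R+\cgfarstar$ is some $Y^{RL}$, use induction. Otherwise it is a star move, so $Y^R\ge *m$ and $Y^R+D+\cgfarstar\ge D+\cgfarstar'\not\le 0$.
\item A move $D\to D^R$ is answered by Left's winning reply in $D^R+\cgup\cgfarstar$. Either $D^{RL}>\cgdown\cgfarstar$ (strict by a birthday argument), and induction on $D$ applies. Or $D^R\ge\cgfarstar$, so $Y+D^R+\cgfarstar\ge Y+\cgfarstar''>0$. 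Or $D^R\ge\cgdown+*m'$, and the first fact again gives Left a first-move win.
\end{itemize}
Applying the lemma to $(Y,D)$, $(Y+D,-D)$, $(-Y,-D)$ and $(-Y-D,D)$ gives $Y+D+\cgfarstar\ge 0\iff Y+\cgfarstar\ge 0$, and the same for $\le$. Hence the outcomes are equal.
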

    
    
    Having established this relaxed equivalence relation, the next step in understanding players' advantage is to estimate how many copies of $\cgup$ or $\cgdown$ a given all-small game equals. In other words, for a game $G$, we wish to find a game $w$ such that $\cgfarstareq{G}{w \cdot \cgup}$ where $w \cdot \cgup$ denotes the `norton product' and in case $w$ is an integer, it is the disjunctive sum of $w$ copies of $\cgup$ when $w > 0$, and $|w|$ copies of $\cgdown$ when $w < 0$. It turns out that if the there exists such a  game $w$, it is unique, and it is referred to as the \emph{atomic weight} of $G$.


    \begin{definition}[Atomic Weight]\label{def: atomic weight}
        For a game $G$, if $\cgfarstareq{G}{w\cdot\cgup}$, for some game $w$, then $G$ is atomic, and the atomic weight of $G$ is  $\aw(G)=w$. \footnote{For further details, see \cites{LiP,S2013}.}
    \end{definition}
    
    If $w = \aw(G)$ is positive, then Left has an advantage in $G$; if $w$ is negative, then Right has an advantage in $G$, which is more precisely stated in Theorem~\ref{thm: outcome using aw}.

    
    Another way to interpret atomic weight is through {\em delayed} moves. We know that $\cgup$ offers Left a one move waiting  advantage, that is, Left can afford to wait through one Right move and still win, while $\cgdown$ offers the same advantage to Right.  Whenever the atomic weight of a game corresponds to the number of $\cgup$ or $\cgdown$ it is equivalent to, we can interpret the atomic weight as the number of moves a player can wait in the game before the opponent secures a win. Note that this interpretation is correct only when the atomic weight is larger than one. The atomic weight of $\cgup+\cgstar=\{0,*\mid 0\}$ is one but Left does not have a waiting  advantage, but  instead she has a parity advantage.

    The next few lemmas play an important role in computation of atomic weight of games.
    
   \begin{lemma}[{\cite{S2013}*{Page~139}}]\label{lem: n-cgup-cgfarstar}
        For any positive integer $n$, $$n\cdot\cgup\;\cgfarstar=\{0\mid (n-1)\cdot\cgup\;\cgfarstar\} \text{ and } n\cdot\cgdown\;\cgfarstar=\{(n-1)\cdot\cgdown\;\cgfarstar\mid0\}$$
    \end{lemma}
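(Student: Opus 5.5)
This is the standard canonical-form identity $n\cdot\cgup\;\cgfarstar=\{0\mid (n-1)\cdot\cgup\;\cgfarstar\}$ together with its negative. I would prove it by induction on $n$, using only the comparison machinery of the appendix. The second identity then follows from the first by taking conjugates, since $\overline{\cgfarstar}=\cgfarstar$ and $\overline{\cgup}=\cgdown$. So it suffices to treat $n\cdot\cgup\;\cgfarstar$.

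Write $K:=\{0\mid (n-1)\cdot\cgup\;\cgfarstar\}$, where for $n=1$ the Right option is simply $\cgfarstar$. I would show that $n\cdot\cgup+\cgfarstar-K\in\pp$, which gives equality by Theorem~\ref{thm: second fundamental theorem}. The key facts to establish first are these:
\begin{itemize}
\item $(n-1)\cdot\cgup\;\cgfarstar \cgfuzzy 0$ or $>0$, according as $n=1$ or $n\ge 2$;
\item $n\cdot\cgup\;\cgfarstar>0$ for all $n\ge 1$;
\item $n\cdot\cgup\;\cgfarstar\ge (n-1)\cdot\cgup\;\cgfarstar$ in the appropriate sense.
\end{itemize}
These follow from $\cgup>0$, from $\cgup+\cgstar m\cgfuzzy 0$ only when $m=1$, and from $2\cdot\cgup+\cgstar m>0$ for all $m$. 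In each case the far star is chosen with birthday exceeding everything in sight.

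The difference game is then played out by cases on the first move. The natural strategy for the second player is to answer each move with its mirror image: a move from $\cgfarstar$ to $\cgstar m$ is answered in $-K$'s far-star subposition, or by reversing through $0$. I would organise this more cleanly by simplifying the literal form of $n\cdot\cgup+\cgfarstar$ directly. Its Left options are $(n-1)\cdot\cgup+\cgstar+\cgfarstar$, which arises from $\cgup=\{0\mid\cgstar\}$ with Left moving to $0$, and $n\cdot\cgup+\cgstar m$. I would show that all of these reverse out through Right responses to positions $\le G$, and that what survives is the option $0$, reached by reversibility via $\cgfarstar\to 0$-type responses. Its Right options are $(n-1)\cdot\cgup+\cgstar+\cgstar+\cgfarstar$, equal to $(n-1)\cdot\cgup\;\cgfarstar$ after simplification, and $n\cdot\cgup+\cgstar m$. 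The latter are dominated, because $n\cdot\cgup+\cgstar m\ge (n-1)\cdot\cgup\;\cgfarstar$ when $n\ge 2$, and they are handled separately when $n=1$.

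The main obstacle is the reversibility bookkeeping for the Left options. One must check that each Left option $G^L$ has a Right reply $G^{LR}\le n\cdot\cgup\;\cgfarstar$ whose Left options are all dominated by $0$, so that the replacement collapses to the single Left option $0$. I would try the reply that restores the far star to a nimber matching the $\cgstar$ produced, and verify the needed inequality via the two-ahead rule (Theorem~\ref{thm: outcome using aw}) applied to sums of ups and nimbers. The base case $n=1$, namely $\cgup\;\cgfarstar=\{0\mid\cgfarstar\}$, would be checked by hand first, since it anchors the induction.
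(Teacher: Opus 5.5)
There is a genuine gap, at exactly the step your sketch calls ``after simplification''. In $\cgup=\{0\mid *\}$ Left moves to $0$ and Right moves to $*$. Write $\cgfarstar=*m$ with $m\ge 2$. The options of $n\cdot\cgup+*m$ coming from a copy of $\cgup$ are then:
\begin{itemize}
\item for Left, $(n-1)\cdot\cgup+*m$, with no extra star;
\item for Right, $(n-1)\cdot\cgup+*+*m=(n-1)\cdot\cgup+*(m\oplus 1)$, with one star rather than two, so nothing cancels.
\end{itemize}
The true identity is
\[
n\cdot\cgup+*m=\{0\mid (n-1)\cdot\cgup+*(m\oplus 1)\}\qquad(n\ge 1,\ m\ge 2).
\]
So the lemma holds only under the far-star convention: each occurrence of $\cgfarstar$ stands for \emph{some} sufficiently remote nimber, and $*+\cgfarstar$ is again written $\cgfarstar$.

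You set up $K$ with the same nimber $*m$. With that choice, the target $n\cdot\cgup+\cgfarstar-K\in\pp$ is false, already in your base case. For $m\ge 2$ one has $\{0\mid *m\}=\cgup+*(m\oplus 1)$, hence $\cgup+*m-\{0\mid *m\}=*\in\np$. No case analysis can reach that target.

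Several other steps also need repair.
\begin{itemize}
\item The mirror strategy is unavailable, because the far star inside $-K$ can only be reached after a move in $-K$.
\item In the reversal step, nimber Left options are not dominated by $0$; they are confused with it. They disappear by reversing through $0$, which is legitimate since $G>0$.
\item Literal options such as $(n-1)\cdot\cgup+*j$ are positive, so you need induction to replace smaller positions by their canonical forms.
\end{itemize}

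The paper gives no proof of this lemma; it only cites Siegel. A route that works is to start from the canonical form $n\cdot\cgup=\{0\mid (n-1)\cdot\cgup+*\}$ and add $*m$.
\begin{itemize}
\item Right options: each $n\cdot\cgup+*k$ with $k<m$ exceeds $(n-1)\cdot\cgup+*(m\oplus 1)$ by $\cgup+*(k\oplus m\oplus 1)$. Since $k\oplus m\oplus 1\neq 1$, this difference is $>0$, so the option is dominated. The surviving Right option does not reverse, because none of its Left options is $\ge G$.
\item Left option $*m$: it reverses through $0$.
\item Left options $n\cdot\cgup+*k$: each reverses through its Right option $(n-1)\cdot\cgup+*(k\oplus 1)$, since $G$ exceeds it by $\cgup+*(m\oplus k\oplus 1)\ge 0$. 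By induction, the Left options of $(n-1)\cdot\cgup+*(k\oplus 1)$ are $0$ and possibly nimbers, and those nimbers again reverse through $0$.
\end{itemize}
What remains is $\{0\mid (n-1)\cdot\cgup+*(m\oplus 1)\}$. This is the lemma once $*(m\oplus 1)$ is read as $\cgfarstar$. The $\cgdown$ identity then follows by negation, as you say.
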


    \begin{lemma}[\cite{S2013}*{Proposition~7.12}]\label{lem: aw of sum of games}
        Let $G$ and $H$ be atomic, then \begin{enumerate}
            \item $\aw(-G)=-\aw(G)$;
            \item $\aw(G+H)=\aw(G)+\aw(H)$.
        \end{enumerate}
    \end{lemma}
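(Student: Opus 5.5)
The statement is Lemma~\ref{lem: aw of sum of games}: atomic weight is negation-reversing and additive for atomic games. It is a standard fact cited from \cite{S2013}. I would prove it directly from Definition~\ref{def: atomic weight} and the characterization in Theorem~\ref{thm: check farstareq}.

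Write $w=\aw(G)$ and $v=\aw(H)$. By definition, $\cgfarstareq{G}{w\cdot\cgup}$ and $\cgfarstareq{H}{v\cdot\cgup}$. By Theorem~\ref{thm: check farstareq} this means
\[
\cgdown\;\cgfarstar<G-w\cdot\cgup<\cgup\;\cgfarstar .
\]

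\textbf{Part (1).} Negate this inequality, using that negation reverses order and that $-\cgfarstar=\cgfarstar$ (a nimber is its own negative). This gives
\[
\cgdown\;\cgfarstar<-G-(-w)\cdot\cgup<\cgup\;\cgfarstar .
\]
Here I use that the Norton product is odd, $-(w\cdot\cgup)=(-w)\cdot\cgup$, which is immediate for integer $w$. Hence $\cgfarstareq{-G}{(-w)\cdot\cgup}$, so $-G$ is atomic with $\aw(-G)=-w$.

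\textbf{Part (2).} I would show that $\sim_{\cgfarstar}$ is a congruence for disjunctive sum. Suppose $\cgfarstareq{G}{G'}$. For any $X$, the definition with test game $H+X$ gives
\[
o(G+H+X+\cgfarstar)=o(G'+H+X+\cgfarstar).
\]
So $\cgfarstareq{G+H}{G'+H}$, provided the far star is taken with birthday exceeding every game involved. Applying this twice gives $\cgfarstareq{G+H}{w\cdot\cgup+v\cdot\cgup}$. For integers, $w\cdot\cgup+v\cdot\cgup=(w+v)\cdot\cgup$ is immediate, so $\aw(G+H)=w+v$. Uniqueness of atomic weight then makes this value well defined.

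The main obstacle is the far star bookkeeping. The nimber $\cgfarstar$ must be chosen large relative to the games in each comparison, and it must be independent of that choice: any two sufficiently large nimbers give the same outcomes. I would argue this via the standard fact that $G+*m$ has an outcome that stabilizes once $m$ exceeds the birthday of $G$, since a large nimber acts as a remote star. A second point needs care if $w,v$ are non-integer games. There, additivity of the Norton product in its first argument is a cited fact from \cite{S2013}*{Ch.~7}, which I would invoke rather than reprove. For this paper only integer atomic weights are used, so the integer case suffices.
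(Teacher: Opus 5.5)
Your proof is correct. The paper gives no argument for this lemma and simply quotes \cite{S2013}. What you have written is a self-contained version of the standard proof behind that citation:
\begin{enumerate}
\item part~(1) by negating the inequalities of Theorem~\ref{thm: check farstareq}, using $-\cgfarstar=\cgfarstar$;
\item part~(2) by noting that $\sim_{\cgfarstar}$ is a congruence, because the test game absorbs $H$, and that $w\cdot\cgup+v\cdot\cgup=(w+v)\cdot\cgup$ for integers.
\end{enumerate}

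One refinement of your ``main obstacle''. The congruence step itself needs no far-star bookkeeping. Since $G+(H+X)$ and $(G+H)+X$ are the same form, they impose literally the same birthday condition on $\cgfarstar$. The choice of far star matters only in two places: the transitivity of $\sim_{\cgfarstar}$ hidden in ``applying this twice'', and the passage from $w\cdot\cgup+v\cdot\cgup$ to the equal game $(w+v)\cdot\cgup$.

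The stabilization fact you invoke there holds exactly as you state it:
\begin{enumerate}
\item For $m>b(K)$ we have $K\neq *m$, since the canonical form of $*m$ has birthday $m$. Hence $K+*m\notin\pp$.
\item By induction on $b(K)$ and then on $n>m$, the options of $K+*n$ and $K+*m$ through $K^L$ (or $K^R$) have equal outcomes. The only extra options of $K+*n$ are the moves to $K+*i$ with $m\le i<n$, and these have the same outcome as $K+*m$.
\item Such an extra move wins for Left only if Right loses moving first on $K+*m$. Because $K+*m\notin\pp$, that already forces Left to win moving first on $K+*m$. The same holds symmetrically for Right.
\end{enumerate}
So $o(K+*n)=o(K+*m)$ for all $n>m>b(K)$, which is what your transitivity argument needs.
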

    
    \begin{lemma}[Atomic Weight of Nimbers]\label{lem: aw of nimbers}
        The atomic weight of a nimber is 0.
    \end{lemma}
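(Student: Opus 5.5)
The plan is to verify Definition~\ref{def: atomic weight} with $w=0$ directly, that is, to show $*m \sim_{\cgfarstar} 0$ for every nimber $*m$. By Theorem~\ref{thm: check farstareq} this amounts to the strict inequalities
\[
\cgdown\;\cgfarstar < *m < \cgup\;\cgfarstar .
\]
Using $-*m=*m$ and Theorem~\ref{thm: second fundamental theorem}, the right inequality is equivalent to $\cgup+\cgfarstar+*m\in\lp$. The left inequality is the conjugate statement $\cgdown+\cgfarstar+*m\in\rp$, which follows from the first by swapping the roles of Left and Right. So everything reduces to one outcome computation.

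First I fix the meaning of $\cgfarstar$ concretely. It is a nimber $*N$ whose birthday exceeds that of everything it is added to; I will take $N$ to be a power of $2$ with $N>m$, and $N\ge 2$ in any case. Nim addition then gives $\cgfarstar+*m=*k$ with $k=N\oplus m\ge N\ge 2$. The claim therefore becomes the classical fact that $\cgup+*k\in\lp$ for every $k\ge 2$.

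I prove this by checking both starting players, recalling $\cgup=\{0\mid *\}$.
\begin{itemize}
\item If Left starts, she moves $*k\to 0$ and leaves $\cgup>0$, which she wins.
\item If Right starts and moves $\cgup\to *$, the position is $*+*k=*(k\oplus 1)$ with $k\oplus1\neq 0$, so Left moves to $0$ and wins.
\item If Right starts and moves $*k\to *j$ with $j<k$, there are three subcases. If $j=0$, the position is $\cgup>0$. If $j=1$, Left moves $*\to 0$ and again leaves $\cgup$. If $j\ge 2$, Left moves $*j\to 0$ and leaves $\cgup$.
\end{itemize}
In every line Left ends up with $\cgup>0$ after her move, so Left wins.

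The only delicate point, and the main obstacle, is ensuring $k\neq 1$, because $\cgup+*$ is fuzzy rather than positive. This is exactly where the far-star convention is used: with $N$ a large power of $2$ one gets $N\oplus m\ge N\ge 2$. Once that is settled, the conjugate argument gives $\cgdown+*k\in\rp$, both inequalities hold, and hence $\aw(*m)=0$.
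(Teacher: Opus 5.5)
Your proof is correct and follows essentially the same route as the paper: you reduce via Theorem~\ref{thm: check farstareq} to a single outcome computation for $\cgup$ (or $\cgdown$) plus a large nimber, and obtain the other inequality by symmetry. The paper simply asserts that $\cgfarstar+*n$ ``is equivalent to'' $\cgfarstar$ and then uses $\cgdown\,\cgfarstar=\{\cgfarstar\mid 0\}$ from Lemma~\ref{lem: n-cgup-cgfarstar}; you instead make that absorption explicit, taking $N$ a power of two so that $N\oplus m\ge 2$ (which excludes the fuzzy case $\cgup+\cgstar$), and verify $\cgup+*k\in\lp$ for $k\ge 2$ by hand, which tightens the paper's argument.
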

    \begin{proof}
          By Definition~\ref{def: atomic weight}, it is sufficient to prove that $\cgfarstareq{*n}{0\cdot \cgup}$, which is simply the same as $\cgfarstareq{*n}{0}$. According to Theorem~\ref{thm: check farstareq}, this is equivalent to verify the inequality:
        \[
        \cgdown\;\cgfarstar < *n < \cgdown\;\cgfarstar.
        \]  We will prove the left inequality, $\cgdown\;\cgfarstar<*n$; the proof of the right one is analogous. 
        
        To prove $\cgdown\;\cgfarstar<*n$, we must show $\cgdown\;\cgfarstar-*n\in \rp$. Here, $\cgfarstar+*n\;(=\cgfarstar-*n)$ is equivalent to $\cgfarstar$ as both are nimbers and $\cgfarstar$ is a large nimber. Therefore the game becomes $\cgdown + \cgfarstar$, which is equal to $\{\cgfarstar\mid 0\}$, by lemma~\ref{lem: n-cgup-cgfarstar}. If Right starts, he wins by moving to 0. If Left starts, she moves to $\cgfarstar$ from where Right can move to 0 and win.
    \end{proof}

    While Theorem~\ref{thm: check farstareq} and Definition~\ref{def: atomic weight} together provide a way to verify whether a given integer $w$ is the atomic weight of a game $G$, they do not offer a method to compute the atomic weight directly. The following theorem from \cite{LiP}, provides a way to determine the atomic weight of an all-small game. Note that it also establishes that each all-small games is atomic, i.e. that they all have well defined atomic weight. There are infinitesimal games that do not have atomic weight, for example.

    \begin{theorem}[{\cite{LiP}*{Theorem~9.39}}]\label{thm: compute aw}
        Let $G$ be all-small. If $G=\{G^L\mid G^R\}$ and $G^L$ and $G^R$ have atomic weights, then the atomic weight of $G$ is given by $\left\{ \aw(G^L)-2\mid\aw(G^R)+2\right\}$ unless this is an integer. In that case, let $x$ be the least integer such that $\aw(G^L)-2\cglfuz x$, and $y$ the greatest integer such that $y\cglfuz \aw(G^R)+2$.
        \begin{itemize}
            \item If $G\cgfuzzy \cgfarstar$, $\aw(G)=0$;
            \item If $G>\cgfarstar$, $\aw(G)=y$;
            \item If $G<\cgfarstar$, $\aw(G)=x$.
        \end{itemize}
    \end{theorem}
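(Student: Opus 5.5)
The plan is to prove the statement by induction on the birthday of $G$. The inductive hypothesis gives $G^L \sim_{\cgfarstar} \aw(G^L)\cdot\cgup$ for every Left option, and similarly for every Right option. By Theorem~\ref{thm: check farstareq}, it then suffices to verify, for the proposed value $w$, the two strict inequalities
\[
\cgdown\;\cgfarstar < G - w\cdot\cgup < \cgup\;\cgfarstar.
\]
By Theorem~\ref{thm: second fundamental theorem}, each inequality is an outcome statement about a single sum: Right must win $G - w\cdot\cgup - \cgup\;\cgfarstar$, and Left must win $G - w\cdot\cgup + \cgup\;\cgfarstar$. We check each by analysing the first move.

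The basic tool is the Norton-product translation of a game form into ups. For a game $w=\{w^L\mid w^R\}$ that is not an integer,
\[
w\cdot\cgup=\{w^L\cdot\cgup+\cgup\cgstar \mid w^R\cdot\cgup+\cgdown\cgstar\}.
\]
For integers we use the explicit forms in Lemma~\ref{lem: n-cgup-cgfarstar}. With $w=\{\aw(G^L)-2\mid \aw(G^R)+2\}$, a Left move in the $-w\cdot\cgup$ component (a Right option of $w\cdot\cgup$ negated) produces a term of the form $-(\aw(G^R)+2)\cdot\cgup+\cgup\cgstar$. This is exactly two ups, up to a star, of compensation for Right's threat $G^R$.

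Concretely, when Left moves in the $G$ component to $G^L$, the inductive hypothesis replaces $G^L$ by $\aw(G^L)\cdot\cgup$ modulo $\cgfarstar$. By construction of $w$, the resulting atomic-weight balance is at least $2$ in Left's favour, or at most $-2$ against Right in the dual case. The two-ahead rule (Theorem~\ref{thm: outcome using aw}) then decides the game. The far star absorbs the parity issues coming from the stars in the translation. This is where additivity (Lemma~\ref{lem: aw of sum of games}) and $\aw(\cgfarstar)=0$ are used.

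In the non-integer case the argument is a direct mirror: every option of $G - w\cdot\cgup \pm \cgup\;\cgfarstar$ is matched by a response restoring a position already known to be won, either by induction or by the two-ahead rule.

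The main obstacle is the integer case. There, $\{\aw(G^L)-2\mid \aw(G^R)+2\}$ collapses to an integer via the simplicity rule, and that integer need not be the correct value. The correct choice among $0$, $x$ and $y$ depends on how $G$ compares with $\cgfarstar$.

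I would first show that every integer in the interval $[x,y]$ satisfies the option-wise inequalities, since all such integers are confused with neither $\aw(G^L)-2$ nor $\aw(G^R)+2$ in the wrong direction. The only remaining issue is then which endpoint realises the strict inequalities against $\cgup\;\cgfarstar$. If $G>\cgfarstar$, Left wins $G+\cgfarstar$ moving second, and this surplus lets us push $w$ up to the largest admissible integer $y$. Symmetrically, $G<\cgfarstar$ gives $x$. If $G\cgfuzzy\cgfarstar$, the first player wins $G+\cgfarstar$; one checks that $0\in[x,y]$ and that $w=0$ is the only value consistent with both strict inequalities.

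Making this endpoint argument rigorous requires a careful case analysis of $G + k\cdot\cgup + \cgfarstar$ for $k$ near the endpoints. I expect this case analysis to take most of the work.
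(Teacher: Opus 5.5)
The paper does not prove this statement; it quotes it from \cite{LiP}. Your sketch therefore has to stand on its own, and as written it has a genuine gap.

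\textbf{The missing case.} The frame is right: induct on birthday and, by Theorem~\ref{thm: check farstareq}, show $G-w\cdot\cgup+\cgup\cgfarstar>0$ and $G-w\cdot\cgup+\cgdown\cgfarstar<0$. But you never treat the decisive option. By Lemma~\ref{lem: n-cgup-cgfarstar}, $\cgup\cgfarstar=\{0\mid\cgfarstar\}$ and $\cgdown\cgfarstar=\{\cgfarstar\mid 0\}$. So Right in the first sum, and Left in the second, can move to $G-w\cdot\cgup+\cgfarstar$, and you must prove $G-w\cdot\cgup+\cgfarstar\cgfuzzy 0$.

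That position contains $G$ itself rather than an option of $G$. Neither the inductive hypothesis nor the two-ahead rule applies, since $\aw(G)$ is what you are computing, and there is no mirror reply. Your claim that in the non-integer case every option is answered by a move back to a known win is false for exactly this option.

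\textbf{Where non-integrality enters.} This is also the only place the hypothesis that $w$ is not an integer can do any work. One way: Left answers with some $G^L$ satisfying $\aw(G^L)-w\ge 1$. Then $G^L-w\cdot\cgup+\cgfarstar$ has the outcome of $(\aw(G^L)-w)\cdot\cgup+\cgfarstar\ge\cgup\cgfarstar>0$. The existence of such a $G^L$ is not given ``by construction''. The construction yields only $\aw(G^L)-2\cglfuz w$, i.e.\ $\aw(G^L)-w\not\ge 2$, which points the other way. For a number $w$ the existence does hold: otherwise $\lfloor w\rfloor$ would fit $\{\aw(G^L)-2\mid\aw(G^R)+2\}$, and $w$ would be an integer. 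Your ``balance at least $2$'' claim for Left's first move tacitly needs the same fact.

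\textbf{The integer case.} Here the gap is essentially the whole theorem. Moves into options of $G$ only confine an integer candidate to $[x,y]$. The choice among $0$, $x$, $y$ is decided entirely by the far-star moves, that is, by positions $G+k\cdot\cgup+\cgfarstar$. This includes your unexplained claim that $0\in[x,y]$ when $G\cgfuzzy\cgfarstar$. For instance, $\cgup=\{0\mid\cgstar\}$ has $x=-1$ and $y=1$. The candidate $w=0$ fails only because Left wins $\cgup+\cgdown\cgfarstar$ by moving to $\cgup+\cgfarstar>0$. Deferring this to ``careful case analysis'' leaves the eccentric case unproved.

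\textbf{The Norton-product formula.} Your formula is wrong: the increments are $2\cdot\cgup+\cgstar$ and $2\cdot\cgdown+\cgstar$, not $\cgup\cgstar$ and $\cgdown\cgstar$. As written, $w=1/2=\{0\mid 1\}$ would give $\{\cgup\cgstar\mid\cgstar\}=0$, and your mirror reply would leave Left one up short.

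With the correct formula, Left's reply in $-w\cdot\cgup$ to Right's move to $G^R$ produces $G^R-\aw(G^R)\cdot\cgup+\cgstar+\cgup\cgfarstar$. The two ups cancel the $+2$ exactly, so the net atomic weight is $1$ with Right to move. The two-ahead rule (Theorem~\ref{thm: outcome using aw}) does not decide such a position. Left wins because $G^R\sim_{\cgfarstar}\aw(G^R)\cdot\cgup$ gives the needed inequality directly via Theorem~\ref{thm: check farstareq}, with the extra $\cgstar$ absorbed into the far star. So the ``surplus of two ups plus two-ahead rule'' mechanism you describe is not what makes the matched replies work.
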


The next Theorem helps to determine the outcome of a game if its atomic weight in known. The second rule is commonly known as the ``two-ahead-rule''.

\begin{theorem}[\cite{S2013}*{Theorem~7.13}]\label{thm: outcome using aw}
    Let $G$ be atomic. Then
    \begin{enumerate}
        \item if $\aw(G)=1$, then $G\cggfuz 0$;
        \item if $\aw(G)\ge 2$, then $G>0$.
    \end{enumerate}
\end{theorem}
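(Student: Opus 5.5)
We would prove the two-ahead rule (Theorem~\ref{thm: outcome using aw}) directly from the characterisation of $\cgfarstar$-equivalence, Theorem~\ref{thm: check farstareq}, together with the canonical forms of Lemma~\ref{lem: n-cgup-cgfarstar}. Throughout we take $w=\aw(G)$ to be an integer, which is the case in which the statement is used in this paper. By Definition~\ref{def: atomic weight}, $\cgfarstareq{G}{w\cdot\cgup}$. Theorem~\ref{thm: check farstareq} then gives
\[
\cgdown\;\cgfarstar<G-w\cdot\cgup .
\]
Here $\cgfarstar$ is a nimber whose birthday exceeds that of every game in play. Adding $w\cdot\cgup$ to both sides and using $\cgup+\cgdown=0$, we obtain
\[
G>(w-1)\cdot\cgup\;\cgfarstar .
\]
The rest of the argument is about the sign of the right-hand side.

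\textbf{Case $w\ge 2$.} We will show, by induction on $k\ge1$, that $k\cdot\cgup\;\cgfarstar>0$; that is, Left wins it whether she moves first or second. By Lemma~\ref{lem: n-cgup-cgfarstar},
\[
k\cdot\cgup\;\cgfarstar=\{0\mid (k-1)\cdot\cgup\;\cgfarstar\}.
\]
If Left moves first, she moves to $0$ and wins.

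If Right moves first, he must move to $(k-1)\cdot\cgup\;\cgfarstar$, and we need Left to win this position as the next player.
\begin{itemize}
\item For $k=1$ the position is $\cgfarstar$, and Left wins by moving to $0$.
\item For $k\ge 2$ the position is $>0$ by the induction hypothesis, so Left wins it moving first.
\end{itemize}
Taking $k=w-1\ge1$ gives $G>(w-1)\cdot\cgup\;\cgfarstar>0$, hence $G>0$.

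\textbf{Case $w=1$.} The inequality reads $G>\cgfarstar$. Suppose, for contradiction, that $G\le 0$. Then $\cgfarstar<G\le 0$, so $\cgfarstar<0$. This is impossible, because a nonzero nimber is confused with $0$: its outcome is $\np$, whereas Theorem~\ref{thm: second fundamental theorem} would require outcome $\rp$. Hence $G\not\le 0$, which is exactly $G\cggfuz 0$.

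The step that needs the most care is the choice of $\cgfarstar$. It has to be taken with birthday larger than those of $G$ and $w\cdot\cgup$ simultaneously, so that the single $\cgfarstar$ produced by Theorem~\ref{thm: check farstareq} is the same one appearing in the canonical forms of Lemma~\ref{lem: n-cgup-cgfarstar}. With that fixed, the two case arguments above are routine.
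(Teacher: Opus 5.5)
Your argument is correct. It is the standard derivation of the two-ahead rule from the far-star characterization (Theorem~\ref{thm: check farstareq}). The paper gives no proof of its own here and simply quotes \cite{S2013}, so there is no competing route in the text.

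Two small points are worth tightening.

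First, you restrict to integer $w$, while the statement allows any atomic weight $w\ge 2$, and atomic weights need not be integers in general (cf.\ Theorem~\ref{thm: compute aw}). The general case costs one line. Norton multiplication by $\cgup$ is order-preserving, so $w\cdot\cgup\ge 2\cdot\cgup$ and therefore
$G>w\cdot\cgup+\cgdown\;\cgfarstar\ge\cgup\;\cgfarstar>0$.

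Second, your closing remark about choosing a single $\cgfarstar$ is slightly off. In Lemma~\ref{lem: n-cgup-cgfarstar} the remote stars on the two sides are in general different nimbers: for example $\cgup+*2=\{0\mid *3\}$, not $\{0\mid *2\}$. So the induction in your case $w\ge 2$ should be stated for all remote stars simultaneously. You can also avoid the induction altogether, since $(w-1)\cdot\cgup\;\cgfarstar=(w-2)\cdot\cgup+\cgup\;\cgfarstar\ge\cgup\;\cgfarstar>0$, using $\cgup\;\cgfarstar>0$, which the paper treats as known.
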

This concludes the CGT basics needed to read and understand the paper.



\bibliographystyle{plain}

\end{document}